\newtheorem{theorem}{Theorem}
\newtheorem{lemma}[theorem]{Lemma}
\newtheorem{definition}[theorem]{Definition}
\chardef\bslash=`\\ 
\newcommand{\wh}{\widehat}
\newcommand{\dA}{{\dot A}}
\newcommand{\dom}{\text{\rm{Dom}}}
\newcommand{\calH}{{\mathcal H}}
\newcommand{\calR}{{\mathcal R}}
   \def\sN{{\mathfrak N}}
\def\bA{{\mathbb A}}      \def\dC{{\mathbb C}}
      \def\dR{{\mathbb R}}
   \def\dZ{{\mathbb Z}}
   \def\cH{{\mathcal H}}
\def\RE{{\rm Re\,}}
\def\Ker{{\rm Ker\,}}
\def\wh{\hat}
\def\uphar{{\upharpoonright\,}}
\DeclareMathOperator{\IM}{Im}
\DeclareMathOperator{\Ext}{Ext}
\newcommand{\eval}[2][\right]{\relax
  \ifx#1\right\relax \left.\fi#2#1\rvert}
\begin{document}

\title[On realization of the original Weyl-Titchmarsh functions]{On realization of the original  Weyl-Titchmarsh functions by   Shr\"odinger  L-systems}

\author{S. Belyi}
\address{Department of Mathematics\\ Troy University\\
Troy, AL 36082, USA\\
}
\curraddr{}
\email{sbelyi@troy.edu}


\author{E. Tsekanovski\u i}
\address{Department of Mathematics\\ Niagara University, New York
14109\\ USA}
\email{tsekanov@niagara.edu}

\subjclass{Primary 47A10; Secondary 47N50, 81Q10}
\date{DD/MM/2004}

\dedicatory{Dedicated with great pleasure to Henk de Snoo on the occasion of his 75-th birthday}

\keywords{L-system, transfer function, impedance function,  Herglotz-Nevanlinna function, Stieltjes function, Shr\"odinger  operator, Weyl-Titch\-marsh function}

\begin{abstract}
We study realizations generated by the original Weyl-Titch\-marsh functions $m_\infty(z)$ and  $m_\alpha(z)$.   It is shown that the Herglotz-Nevanlinna functions $(-m_\infty(z))$ and $(1/m_\infty(z))$ can be realized as the impedance functions of  the corresponding   Shr\"odinger  L-systems sharing the same main dissipative operator. These L-systems are presented explicitly and  related to Dirichlet and Neumann boundary problems. Similar results but related to the mixed boundary problems are derived for the Herglotz-Nevanlinna functions $(-m_\alpha(z))$ and $(1/m_\alpha(z))$. We also obtain some additional properties of these realizations in the case when the minimal symmetric Shr\"odinger  operator is non-negative. In addition to that we state and prove the uniqueness realization criteria for Shr\"odinger L-systems with equal boun\-dary parameters.  A  condition for two Shr\"odinger L-systems  to share the same main operator is established as well. Examples that illustrate the obtained results are presented in the end of the paper.

 \end{abstract}

\maketitle

\tableofcontents

\section{Introduction}\label{s1}


In the current paper we consider L-systems  with dissipative Shr\"odinger operators. For the sake of brevity we will refer to these L-systems as \textit{Shr\"odinger L-systems} for the rest of the manuscript.   The formal definition, exposition and discussions of general and Shr\"odinger L-systems are presented in Sections \ref{s2} and  \ref{s4}. We capitalize on the fact that all Shr\"odinger L-systems $\Theta_{\mu,h}$ form a two-parametric family whose members are uniquely defined by a real-valued parameter $\mu$ and a complex boundary value $h$ ($\IM h>0$) of the main dissipative operator.

The focus of the paper is set on two classical objects related to a Shr\"odinger  operator:  the \textit{original Weyl-Titchmarsh function} $m_\infty(z)$ and its \textit{linear-fractional transformation} $m_\alpha(z)$ given by \eqref{e-59-LFT} that was introduced and studied in \cite{W},  \cite{W10}, \cite{Levitan}, \cite{Ti62}. It is well known (see \cite{Ti62}, \cite{Levitan}) that $(-m_\infty(z))$ and $(1/m_\infty(z))$ as well as
$(-m_\alpha(z))$ and $(1/m_\alpha(z))$ are  the Herglotz-Nevanlin\-na functions. In Section \ref{s5} we show that the Herglotz-Nevanlinna functions  $(-m_\infty(z))$ and $(1/m_\infty(z))$ can be realized as the impedance function of  Shr\"odinger L-systems  $\Theta_{0,i}$ and $\Theta_{\infty,i}$, respectively (see Theorems \ref{t-6} and \ref{t-7}). Moreover, these two realizing L-system share the same main dissipative Shr\"odinger operator and are connected to Dirichlet \eqref{e-42-quasi} and Neumann \eqref{e-53-quasi} boundary problems, respectively. In Section \ref{s6} we treat the realization of the Herglotz-Nevanlinna functions $(-m_\alpha(z))$ and $(1/m_\alpha(z))$ that are linear-fractional transformations of $m_\infty(z)$ described in details in \cite{Levitan}. As a result we obtain a one-parametric families of realizing Shr\"odinger L-systems $\Theta_{\tan\alpha, i}$ and $\Theta_{(-\cot\alpha), i}$, respectively. In Section \ref{s7} we narrow down the realization results from the previous two sections to the class of Shr\"odinger L-systems that are based on non-negative symmetric Shr\"odinger  operator to obtain additional properties. In particular, in Theorem \ref{t-14} we describe the cases when the realizing Shr\"odinger L-systems are accretive. Moreover, it turns out  that the quasi-kernel $\hat A_0$ of $\RE\bA_{0, i}$ in the constructed realizing L-system $\Theta_{0,i}$ corresponds to the Friedrich's extension  while the quasi-kernel $\hat A_\infty$ of $\RE\bA_{\infty, i}$ in $\Theta_{\infty,i}$ corresponds to the Krein-von Neumann extension of our non-negative symmetric operator $\dA$ only in the case when $m_\infty(-0)=0$.  Section \ref{s8} raises and answers the uniqueness  questions of realization by a Shr\"odinger L-systems. After giving the general definition of two equal L-systems,  we state and prove the criteria for two Shr\"odinger L-systems  to be equal. Precisely,  Theorem \ref{t-17} is saying that two Shr\"odinger L-systems with the same underlying parameters $h$ and $\mu$ are equal if and only if their impedance functions match.
Then  we generalize this result and establish a  condition for two Shr\"odinger L-systems to share the same main operator. In Theorem \ref{t-19} we show that two Shr\"odinger L-systems with the same parameter $h$  share the same main operator $T_h$ if and only if their impedance functions are connected by the Donoghue transform \eqref{e-87-Don}. The paper is concluded with two examples that illustrate  the main results and concepts.

The present work is a further development of the theory of open physical systems conceived by M.~Liv\u sic in \cite{Lv2}.

\section{Preliminaries}\label{s2}

For a pair of Hilbert spaces $\calH_1$, $\calH_2$ we denote by
$[\calH_1,\calH_2]$ the set of all bounded linear operators from
$\calH_1$ to $\calH_2$. Let $\dA$ be a closed, densely defined,
symmetric operator in a Hilbert space $\calH$ with inner product
$(f,g),f,g\in\calH$. Any non-symmetric operator $T$ in $\cH$ such that
\[
\dA\subset T\subset\dA^*
\]
is called a \textit{quasi-self-adjoint extension} of $\dA$.

 Consider the rigged Hilbert space (see \cite{Ber}, \cite{ABT})
$\calH_+\subset\calH\subset\calH_- ,$ where $\calH_+ =\dom(\dA^*)$ and
\begin{equation}\label{108}
(f,g)_+ =(f,g)+(\dA^* f, \dA^*g),\;\;f,g \in \dom(A^*).
\end{equation}
Let $\calR$ be the \textit{\textrm{Riesz-Berezansky   operator}} $\calR$ (see \cite{Ber}, \cite{ABT}) which maps $\mathcal H_-$ onto $\mathcal H_+$ such
 that   $(f,g)=(f,\calR g)_+$ ($\forall f\in\calH_+$, $g\in\calH_-$) and
 $\|\calR g\|_+=\| g\|_-$.
 Note that
identifying the space conjugate to $\calH_\pm$ with $\calH_\mp$, we
get that if $\bA\in[\calH_+,\calH_-]$, then
$\bA^*\in[\calH_+,\calH_-].$
An operator $\bA\in[\calH_+,\calH_-]$ is called a \textit{self-adjoint
bi-extension} of a symmetric operator $\dA$ if $\bA=\bA^*$ and $\bA
\supset \dA$.
Let $\bA$ be a self-adjoint
bi-extension of $\dA$ and let the operator $\hat A$ in $\cH$ be defined as follows:
\[
\dom(\hat A)=\{f\in\cH_+:\bA f\in\cH\}, \quad \hat A=\bA\uphar\dom(\hat A).
\]
The operator $\hat A$ is called a \textit{quasi-kernel} of a self-adjoint bi-extension $\bA$ (see \cite{TSh1}, \cite[Section 2.1]{ABT}).
According to the von Neumann Theorem (see \cite[Theorem 1.3.1]{ABT}) the domain of $\wh A$, a self-adjoint extension of $\dA$,  can be expressed as
\begin{equation}\label{DOMHAT}
\dom(\hat A)=\dom(\dA)\oplus(I+U)\sN_{i},
\end{equation}
where von Neumann's parameter $U$ is a $(\cdot)$ (and $(+)$)-isometric operator from $\sN_i$ into $\sN_{-i}$  and $$\sN_{\pm i}=\Ker (\dA^*\mp i I)$$ are the deficiency subspaces of $\dA$.
 A self-adjoint bi-extension $\bA$ of a symmetric operator $\dA$ is called \textit{t-self-adjoint} (see \cite[Definition 4.3.1]{ABT}) if its quasi-kernel $\hat A$ is self-adjoint operator in $\calH$.
An operator $\bA\in[\calH_+,\calH_-]$  is called a \textit{quasi-self-adjoint bi-extension} of an operator $T$ if $\bA\supset T\supset \dA$ and $\bA^*\supset T^*\supset\dA.$  We will be mostly interested in the following type of quasi-self-adjoint bi-extensions.
Let $T$ be a quasi-self-adjoint extension of $\dA$ with nonempty resolvent set $\rho(T)$. A quasi-self-adjoint bi-extension $\bA$ of an operator $T$ is called (see \cite[Definition 3.3.5]{ABT}) a \textit{($*$)-extension } of $T$ if $\RE\bA$ is a
t-self-adjoint bi-extension of $\dA$.
In what follows we assume that $\dA$ has deficiency indices $(1,1)$. In this case it is known \cite{ABT} that every  quasi-self-adjoint extension $T$ of $\dA$  admits $(*)$-extensions.
The description of all $(*)$-extensions via Riesz-Berezansky   operator $\calR$ can be found in \cite[Section 4.3]{ABT}.

Recall that a linear operator $T$ in a Hilbert space $\calH$ is called \textbf{accretive} \cite{Ka} if $\RE(Tf,f)\ge 0$ for all $f\in \dom(T)$.  We call an accretive operator $T$
\textbf{$\beta$-sectorial} \cite{Ka} if there exists a value of $\beta\in(0,\pi/2)$ such that
\begin{equation}\label{e8-29}
    (\cot\beta) |\IM(Tf,f)|\le\,\RE(Tf,f),\qquad f\in\dom(T).
\end{equation}
We say that the angle of sectoriality $\beta$ is \textbf{exact} for a $\beta$-sectorial
operator $T$ if $$\tan\beta=\sup_{f\in\dom(T)}\frac{|\IM(Tf,f)|}{\RE(Tf,f)}.$$
An accretive operator is called \textbf{extremal accretive} if it is not $\beta$-sectorial for any $\beta\in(0,\pi/2)$.
 A $(*)$-extension $\bA$ of $T$ is called \textbf{accretive} if $\RE(\bA f,f)\ge 0$ for all $f\in\cH_+$. This is
equivalent to that the real part $\RE\bA=(\bA+\bA^*)/2$ is a nonnegative self-adjoint bi-extension of $\dA$.

The following definition is a ``lite" version of the definition of L-system given for a scattering L-system with
 one-dimensional input-output space. It is tailored for the case when the symmetric operator of an L-system has deficiency indices $(1,1)$. The general definition of an L-system can be found in \cite[Definition 6.3.4]{ABT} (see also \cite{BHST1} for a non-canonical version).
\begin{definition} 
 An
array
\begin{equation}\label{e6-3-2}
\Theta= \begin{pmatrix} \bA&K&\ 1\cr \calH_+ \subset \calH \subset
\calH_-& &\dC\cr \end{pmatrix}
\end{equation}
 is called an \textbf{{L-system}}   if:
\begin{enumerate}
\item[(1)] {$T$ is a dissipative ($\IM(Tf,f)\ge0$, $f\in\dom(T)$) quasi-self-adjoint extension of a symmetric operator $\dA$ with deficiency indices $(1,1)$};
\item[(2)] {$\mathbb  A$ is a   ($\ast $)-extension of  $T$};
\item[(3)] $\IM\bA= KK^*$, where $K\in [\dC,\calH_-]$ and $K^*\in [\calH_+,\dC]$.
\end{enumerate}
\end{definition}
  Operators $T$ and $\bA$ are called a \textit{main and state-space operators respectively} of the system $\Theta$, and $K$ is  a \textit{channel operator}.
It is easy to see that the operator $\bA$ of the system  \eqref{e6-3-2}  is such that $\IM\bA=(\cdot,\chi)\chi$, $\chi\in\calH_-$ and pick $K c=c\cdot\chi$, $c\in\dC$ (see \cite{ABT}).
  A system $\Theta$ in \eqref{e6-3-2} is called \textit{minimal} if the operator $\dA$ is a prime operator in $\calH$, i.e., there exists no non-trivial reducing invariant subspace of $\calH$ on which it induces a self-adjoint operator. Minimal L-systems of the form \eqref{e6-3-2} with  one-dimensional input-output space were also considered in \cite{BMkT}.

We  associate with an L-system $\Theta$ the  function
\begin{equation}\label{e6-3-3}
W_\Theta (z)=I-2iK^\ast (\mathbb  A-zI)^{-1}K,\quad z\in \rho (T),
\end{equation}
which is called the \textbf{transfer  function} of the L-system $\Theta$. We also consider the  function
\begin{equation}\label{e6-3-5}
V_\Theta (z) = K^\ast (\RE\bA - zI)^{-1} K,
\end{equation}
that is called the
\textbf{impedance function} of an L-system $ \Theta $ of the form (\ref{e6-3-2}).  The transfer function $W_\Theta (z)$ of the L-system $\Theta $ and function $V_\Theta (z)$ of the form (\ref{e6-3-5}) are connected by the following relations valid for $\IM z\ne0$, $z\in\rho(T)$,
\begin{equation*}\label{e6-3-6}
\begin{aligned}
V_\Theta (z) &= i [W_\Theta (z) + I]^{-1} [W_\Theta (z) - I],\\
W_\Theta(z)&=(I+iV_\Theta(z))^{-1}(I-iV_\Theta(z)).
\end{aligned}
\end{equation*}
An L-system $\Theta $ of the form \eqref{e6-3-2} is called an \textbf{accretive L-system} (\cite{BT10}, \cite{DoTs}) if its state-space operator  operator $\bA$ is accretive, that is $\RE(\bA f,f)\ge 0$ for all $f\in \calH_+$.
 An accretive  L-system  is called  \textbf{sectorial} if the operator  $\bA$ is sectorial, i.e., satisfies \eqref{e8-29} for some $\beta\in(0,\pi/2)$ and all $f\in\cH_+$. 

Now let us consider a minimal L-system $\Theta$ of the form \eqref{e6-3-2}. 
Let also
\begin{equation}\label{e-62-alpha}
\Theta_\alpha= \begin{pmatrix} \bA_\alpha&K_\alpha&\ 1\cr \calH_+ \subset \calH \subset
\calH_-& &\dC\cr \end{pmatrix},\quad \alpha\in[0,\pi),
\end{equation}
 be a one-parametric family of L-systems such that
 \begin{equation}\label{e-63-alpha}
    W_{\Theta_\alpha}(z)=W_\Theta(z)\cdot (-e^{2i\alpha}),\quad \alpha\in[0,\pi).
 \end{equation}
The existence and structure of $\Theta_\alpha$ were described in details in \cite[Section 8.3]{ABT}. In particular, it was shown that $\Theta$ and $\Theta_\alpha$ share the same main operator $T$ and that
\begin{equation}\label{e-64-alpha}
    V_{\Theta_\alpha}(z)=\frac{\cos\alpha+(\sin\alpha) V_\Theta(z)}{\sin\alpha-(\cos\alpha) V_\Theta(z)}.
\end{equation}
Formula \eqref{e-64-alpha} defines \textbf{Donoghue transform} of the function $V_\Theta(z)$ (see \cite[Section 8.3]{ABT} for a more general definition).

A scalar function $V(z)$ is called the Herglotz-Nevanlinna function if it is holomorphic on ${\dC \setminus \dR}$, symmetric with respect to the real axis, i.e., $V(z)^*=V(\bar{z})$, $z\in {\dC \setminus \dR}$, and if it satisfies the positivity condition $\IM V(z)\geq 0$,  $z\in \dC_+$.
 The class of all Herglotz-Nevanlinna functions, that can be realized as impedance functions of L-systems, and connections with Weyl-Titchmarsh functions can be found in \cite{ABT}, \cite{BMkT},  \cite{DMTs},  \cite{GT} and references therein.
The following definition  can be found in \cite{KK74}.
A scalar Herglotz-Nevanlinna function $V(z)$ is a \textit{Stieltjes function} if it is holomorphic in $\Ext[0,+\infty)$ and
\begin{equation}\label{e4-0}
\frac{\IM[zV(z)]}{\IM z}\ge0.
\end{equation}
It is known \cite{KK74} that a Stieltjes function  $V(z)$  admits the following integral representation
\begin{equation}\label{e8-94}
V(z) =\gamma+\int\limits_0^\infty\frac {dG(t)}{t-z},
\end{equation}
where $\gamma\ge0$ and $G(t)$ is a non-decreasing on $[0,+\infty)$  function such that
$\int^\infty_0\frac{dG(t)}{1+t}<\infty.$  We are going to focus on the class $S_0(R)$  (see \cite{BT10}, \cite{DoTs}, \cite{ABT}) of scalar Stieltjes functions, whose definition is the following.
A scalar Stieltjes function $V(z)$ is said to be a member of the \textbf{class $S_0(R)$}\index{Class!$S_0(R)$} if the measure $G(t)$ in  representation \eqref{e8-94} is of unbounded variation.
It was shown in \cite{ABT} (see also \cite{BT10}) that such a function $V(z)$ can be realized as the impedance function of an accretive  L-system $\Theta$ of the form \eqref{e6-3-2} with a densely defined symmetric operator if and only if it belongs to the class $S_0(R)$.

\section{L-systems with Schr\"odinger operator and their impedance functions}\label{s4}

Let $\calH=L_2[\ell,+\infty)$, $\ell\ge0$, and $l(y)=-y^{\prime\prime}+q(x)y$, where $q$ is a real locally summable on  $[\ell,+\infty)$ function. Suppose that the symmetric operator
\begin{equation}
\label{128}
 \left\{ \begin{array}{l}
 \dA y=-y^{\prime\prime}+q(x)y \\
 y(\ell)=y^{\prime}(\ell)=0 \\
 \end{array} \right.
\end{equation}
has deficiency indices (1,1). Let $D^*$ be the set of functions locally absolutely continuous together with their first derivatives such that $l(y) \in L_2[\ell,+\infty)$. Consider $\calH_+=\dom(\dA^*)=D^*$ with the scalar product
$$(y,z)_+=\int_{\ell}^{\infty}\left(y(x)\overline{z(x)}+l(y)\overline{l(z)}
\right)dx,\;\; y,\;z \in D^*.$$ Let $\calH_+ \subset L_2[\ell,+\infty) \subset \calH_-$ be the corresponding triplet of Hilbert spaces. Consider the operators
\begin{equation}\label{131}
 \left\{ \begin{array}{l}
 T_hy=l(y)=-y^{\prime\prime}+q(x)y \\
 hy(\ell)-y^{\prime}(\ell)=0 \\
 \end{array} \right.
           ,\quad  \left\{ \begin{array}{l}
 T^*_hy=l(y)=-y^{\prime\prime}+q(x)y \\
 \overline{h}y(\ell)-y^{\prime}(\ell)=0 \\
 \end{array} \right.,
\end{equation}
where $\IM h>0$.
Let  $\dA$ be a symmetric operator  of the form \eqref{128} with deficiency indices (1,1), generated by the differential operation $l(y)=-y^{\prime\prime}+q(x)y$. Let also $\varphi_k(x,\lambda) (k=1,2)$ be the solutions of the following Cauchy problems:
$$\left\{ \begin{array}{l}
 l(\varphi_1)=\lambda \varphi_1 \\
 \varphi_1(\ell,\lambda)=0 \\
 \varphi'_1(\ell,\lambda)=1 \\
 \end{array} \right., \qquad
\left\{ \begin{array}{l}
 l(\varphi_2)=\lambda \varphi_2 \\
 \varphi_2(\ell,\lambda)=-1 \\
 \varphi'_2(\ell,\lambda)=0 \\
 \end{array} \right.. $$
It is well known \cite{Na68}, \cite{Levitan} that there exists a function $m_\infty(\lambda)$  introduced by H.~Weyl \cite{W}, \cite{W10} for which
$$\varphi(x,\lambda)=\varphi_2(x,\lambda)+m_\infty(\lambda)
\varphi_1(x,\lambda)$$ belongs to $L_2[\ell,+\infty)$. The function $m_\infty(\lambda)$ is not a Herglotz-Nevanlinna function but $(-m_\infty(\lambda))$ and $(1/m_\infty(\lambda))$ are.

Now we shall construct an L-system based on a non-self-adjoint Schr\"odin\-ger operator $T_h$ with $\IM h>0$.  It  was shown in \cite{ArTs0}, \cite{ABT} that  the set of all ($*$)-extensions of a non-self-adjoint Schr\"odinger operator $T_h$ of the form \eqref{131} in $L_2[\ell,+\infty)$ can be represented in the form
\begin{equation}\label{137}
\begin{split}
&\bA_{\mu, h}\, y=-y^{\prime\prime}+q(x)y-\frac {1}{\mu-h}\,[y^{\prime}(\ell)-
hy(\ell)]\,[\mu \delta (x-\ell)+\delta^{\prime}(x-\ell)], \\
&\bA^*_{\mu, h}\, y=-y^{\prime\prime}+q(x)y-\frac {1}{\mu-\overline h}\,
[y^{\prime}(\ell)-\overline hy(\ell)]\,[\mu \delta
(x-\ell)+\delta^{\prime}(x-\ell)].
\end{split}
\end{equation}
Moreover, the formulas \eqref{137} establish a one-to-one correspondence between the set of all ($*$)-extensions of a Schr\"odinger operator $T_h$ of the form \eqref{131} and all real numbers $\mu \in [-\infty,+\infty]$. One can easily check that the ($*$)-extension $\bA_{\mu, h}$ in \eqref{137} of the non-self-adjoint dissipative Schr\"odinger operator $T_h$, ($\IM h>0$) of the form \eqref{131} satisfies the condition
\begin{equation*}\label{145}
\IM\bA_{\mu, h}=\frac{\bA_{\mu, h} - \bA^*_{\mu, h}}{2i}=(.,g_{\mu, h})g_{\mu, h},
\end{equation*}
where
\begin{equation}\label{146}
g_{\mu, h}=\frac{(\IM h)^{\frac{1}{2}}}{|\mu - h|}\,[\mu\delta(x-\ell)+\delta^{\prime}(x-\ell)]
\end{equation}
and $\delta(x-\ell), \delta^{\prime}(x-\ell)$ are the delta-function and
its derivative at the point $\ell$, respectively. Furthermore,
\begin{equation*}\label{147}
(y,g_{\mu, h})=\frac{(\IM h)^{\frac{1}{2}}}{|\mu - h|}\ [\mu y(\ell)
-y^{\prime}(\ell)],
\end{equation*}
where $y\in \calH_+$, $g_{\mu, h}\in \calH_-$, $\calH_+ \subset L_2[\ell,+\infty) \subset \calH_-$ and the triplet of Hilbert spaces discussed above.

It was also shown in \cite{ABT} that the quasi-kernel $\hat A_\xi$ of $\RE\bA_{\mu, h}$ is given by
\begin{equation}\label{e-31}
  \left\{ \begin{array}{l}
 \hat A_\xi y=-y^{\prime\prime}+q(x)y \\
 y^{\prime}(\ell)=\xi y(\ell) \\
 \end{array} \right.,\quad \textrm{where} \quad  \xi=\frac{\mu\RE h-|h|^2}{\mu-\RE h}.
\end{equation}
Let $E=\dC$, $K_{\mu, h}{c}=cg_{\mu, h} \;(c\in \dC)$. It is clear that
\begin{equation}\label{148}
K^*_{\mu, h} y=(y,g_{\mu, h}),\quad  y\in \calH_+,
\end{equation}
and $\IM\bA_{\mu, h}=K_{\mu, h}K^*_{\mu, h}.$ Therefore, the array
\begin{equation}\label{149}
\Theta_{\mu, h}= \begin{pmatrix} \bA_{\mu, h}&K_{\mu, h}&1\cr \calH_+ \subset
L_2[\ell,+\infty) \subset \calH_-& &\dC\cr \end{pmatrix},
\end{equation}
is an L-system  with the main operator $T_h$, ($\IM h>0$) of the form \eqref{131},  the state-space operator $\bA_{\mu, h}$ of the form \eqref{137}, and  with the channel operator $K_{\mu, h}$ of the form \eqref{148}.
It was established in \cite{ArTs0}, \cite{ABT} that the transfer and impedance functions of $\Theta_{\mu, h}$ are
\begin{equation}\label{150}
W_{\Theta_{\mu, h}}(z)= \frac{\mu -h}{\mu - \overline h}\,\,
\frac{m_\infty(z)+ \overline h}{m_\infty(z)+h},
\end{equation}
and
\begin{equation}\label{1501}
V_{\Theta_{\mu, h}}(z)=\frac{\left(m_\infty(z)+\mu\right)\IM h}{\left(\mu-\RE h\right)m_\infty(z)+\mu\RE h-|h|^2}.
\end{equation}
It was shown in \cite[Section 10.2]{ABT} that if the parameters $\mu$ and $\xi$ are related via \eqref{e-31}, then the two L-systems $\Theta_{\mu, h}$ and $\Theta_{\xi, h}$ of the form \eqref{149} have the following property
\begin{equation}\label{e-35-mu-xi}
    W_{\Theta_{\mu, h}}(z)=-W_{\Theta_{\xi, h}}(z),\; V_{\Theta_{\mu, h}}(z)=-\frac{1}{V_{\Theta_{\xi, h}}(z)},\; \textrm{where} \quad \xi=\frac{\mu\RE h-|h|^2}{\mu-\RE h}.
\end{equation}
This result can be generalized as follows.
\begin{lemma}\label{l-2}
Let $\Theta_{\mu, h}$ and $\Theta_{\mu(\alpha), h}$ be two L-systems of the form \eqref{149} such that
\begin{equation}\label{e-22-alpha}
    V_{\Theta_{\mu(\alpha), h}}(z)=\frac{\cos\alpha+(\sin\alpha) V_{\Theta_{\mu, h}}(z)}{\sin\alpha-(\cos\alpha) V_{\Theta_{\mu, h}}(z)}.
\end{equation}
Then
\begin{equation}\label{e-23-mu-alpha}
    \mu(\alpha)=\frac{h(\mu-\bar h)+e^{2i\alpha}(\mu-h)\bar h}{\mu-\bar h+e^{2i\alpha}(\mu-h)}.
\end{equation}
\end{lemma}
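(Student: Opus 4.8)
The plan is to combine the Donoghue-transform relation \eqref{e-22-alpha} with the explicit formula \eqref{1501} for the impedance function of a Schr\"odinger L-system, and then read off the parameter $\mu(\alpha)$ by comparing the two sides as rational functions of $m_\infty(z)$. First I would abbreviate $m := m_\infty(z)$ and write
\[
V_{\Theta_{\mu,h}}(z) = \frac{(m+\mu)\IM h}{(\mu-\RE h)m+\mu\RE h-|h|^2}
\]
from \eqref{1501}. Substituting this into the right-hand side of \eqref{e-22-alpha} and clearing denominators, one obtains $V_{\Theta_{\mu(\alpha),h}}(z)$ as a single ratio of two affine functions of $m$; the coefficients of $m$ and the constant terms in numerator and denominator are explicit expressions in $\mu$, $h$, $\RE h$, $\IM h$, and $\alpha$. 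On the other hand, by \eqref{1501} applied with the (as yet unknown) parameter $\mu(\alpha)$ in place of $\mu$, the function $V_{\Theta_{\mu(\alpha),h}}(z)$ is also a ratio of affine functions of $m$ whose coefficients involve $\mu(\alpha)$. Matching the two representations — they must agree as functions of $z$, hence as rational functions of $m$ since $m_\infty$ is non-constant — gives an equation determining $\mu(\alpha)$ (the overall scale of numerator and denominator is irrelevant, so it suffices to match the ratio of the $m$-coefficient to the constant term in, say, the denominator, or equivalently to solve for the unique $\mu(\alpha)$ making the two M\"obius-in-$m$ maps coincide).

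A cleaner route, which I would actually follow to keep the algebra under control, is to pass through the transfer function. By the $W\!\leftrightarrow\!V$ correspondence recorded after \eqref{e6-3-5}, the Donoghue transform \eqref{e-22-alpha} on the impedance level corresponds exactly to multiplication of the transfer function by $(-e^{2i\alpha})$, i.e. $W_{\Theta_{\mu(\alpha),h}}(z) = -e^{2i\alpha}\,W_{\Theta_{\mu,h}}(z)$, which is precisely \eqref{e-63-alpha}. Now I invoke the closed form \eqref{150}:
\[
W_{\Theta_{\mu,h}}(z) = \frac{\mu-h}{\mu-\bar h}\cdot\frac{m+\bar h}{m+h},
\qquad
W_{\Theta_{\mu(\alpha),h}}(z) = \frac{\mu(\alpha)-h}{\mu(\alpha)-\bar h}\cdot\frac{m+\bar h}{m+h}.
\]
The factor $\dfrac{m+\bar h}{m+h}$ is common, so it cancels, and the defining relation collapses to the scalar identity
\[
\frac{\mu(\alpha)-h}{\mu(\alpha)-\bar h} = -e^{2i\alpha}\,\frac{\mu-h}{\mu-\bar h}.
\]
This is a single linear equation for $\mu(\alpha)$: cross-multiplying gives $(\mu(\alpha)-h)(\mu-\bar h) = -e^{2i\alpha}(\mu(\alpha)-\bar h)(\mu-h)$, and solving for $\mu(\alpha)$ yields
\[
\mu(\alpha) = \frac{h(\mu-\bar h) + e^{2i\alpha}\bar h(\mu-h)}{(\mu-\bar h) + e^{2i\alpha}(\mu-h)},
\]
which is exactly \eqref{e-23-mu-alpha}. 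One should note in passing that the formula \eqref{150} for the transfer function already presupposes $\mu-h\neq 0$ and $\mu-\bar h\neq 0$ (these hold automatically since $\IM h>0$ while $\mu$ is real, possibly $\pm\infty$, in which case the formula is read via the obvious limit), so no case analysis is needed beyond the standard $\mu=\infty$ convention built into \eqref{137}–\eqref{149}.

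The only genuine point requiring care — the main obstacle, such as it is — is the justification that \eqref{e-22-alpha} is equivalent to $W_{\Theta_{\mu(\alpha),h}} = -e^{2i\alpha}W_{\Theta_{\mu,h}}$; this is the content of formula \eqref{e-64-alpha} together with \eqref{e-63-alpha} in the general L-system setting recalled in Section~\ref{s2} (see \cite[Section 8.3]{ABT}), applied here to the specific Schr\"odinger L-systems $\Theta_{\mu,h}$, so it may be cited rather than reproved. Everything after that is the one-line M\"obius computation above. I would also remark that this gives an alternative derivation of \eqref{e-35-mu-xi}: taking $\alpha=\pi/2$ forces $e^{2i\alpha}=-1$, and the displayed identity for $\mu(\alpha)$ then reduces, after simplification using $h\bar h=|h|^2$ and $h+\bar h=2\RE h$, to $\mu(\pi/2)=\dfrac{\mu\RE h-|h|^2}{\mu-\RE h}=\xi$, consistent with \eqref{e-31}.
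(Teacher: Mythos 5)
Your main argument is correct and is essentially the paper's own proof: cite \cite[Section 8.3]{ABT} to convert the Donoghue relation \eqref{e-22-alpha} into $W_{\Theta_{\mu(\alpha),h}}=(-e^{2i\alpha})W_{\Theta_{\mu,h}}$, cancel the common factor $\frac{m_\infty(z)+\bar h}{m_\infty(z)+h}$ in \eqref{150}, and solve the resulting linear equation for $\mu(\alpha)$. One correction to your closing aside: substituting $e^{2i\alpha}=-1$ (i.e. $\alpha=\pi/2$) into \eqref{e-23-mu-alpha} gives $\mu(\pi/2)=\mu$, since at $\alpha=\pi/2$ the transform \eqref{e-22-alpha} is the identity; it is $\alpha=0$, with $e^{2i\alpha}=+1$, that yields $\mu(0)=\frac{\mu\RE h-|h|^2}{\mu-\RE h}=\xi$ and recovers \eqref{e-35-mu-xi}, exactly as the paper notes after the lemma.
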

\begin{proof}
It was shown in \cite[Section 8.3]{ABT} that if the impedance functions \break $V_{\Theta_{\mu(\alpha), h}}(z)$ and $V_{\Theta_{\mu, h}}$ are connected by the Donoghue transform \eqref{e-22-alpha} (see also \eqref{e-64-alpha}), then the corresponding transfer functions are related by
\begin{equation}\label{e-24-W}
    W_{\Theta_{\mu(\alpha), h}}(z)=(-e^{2i\alpha})\cdot W_{\Theta_{\mu, h}}(z).
\end{equation}
Combining \eqref{e-24-W} with \eqref{150} above and setting $U=-e^{2i\alpha}$ temporarily we obtain
$$
\frac{\mu(\alpha) -h}{\mu(\alpha) - \overline h}\,\,\frac{m_\infty(z)+ \overline h}{m_\infty(z)+h}=U\cdot\frac{\mu -h}{\mu - \overline h}\,\, \frac{m_\infty(z)+ \overline h}{m_\infty(z)+h},
$$
or, after canceling common factors,
$$
\frac{\mu(\alpha) -h}{\mu(\alpha) - \overline h}=U\cdot\frac{\mu -h}{\mu - \overline h}.
$$
This yields
$$
\mu(\alpha)\mu-\mu(\alpha)\bar h-h\mu+|h|^2=U\mu(\alpha)\mu-U\mu(\alpha)h-U\bar h\mu+U|h|^2.
$$
Solving the above for $\mu(\alpha)$ gives
$$
\mu(\alpha)=\frac{h\mu-|h|^2-U\bar h\mu+U|h|^2}{\mu-\bar h-U\mu+Uh}.
$$
Substituting $U=-e^{2i\alpha}$ in the above and simplifying results in \eqref{e-23-mu-alpha}.
\end{proof}
As one can easily see the value of $\xi$ in \eqref{e-35-mu-xi} follows from \eqref{e-23-mu-alpha} if one sets $\alpha=0$ and then $\xi=\mu(0)$.


\section{Realizations of $-m_\infty(z)$ and $1/m_\infty(z)$.}\label{s5}

It is known \cite{Levitan}, \cite{Na68} that the original  Weyl-Titchmarsh function $m_\infty(z)$ has a property that $(-m_\infty(z))$ is a Herglotz-Nevanlinna function. Hence, the question whether $(-m_\infty(z))$ can be realized as the impedance function of a Shr\"odinger L-system is more than relevant. The following theorem contains the answer.

\begin{theorem}\label{t-6}%
Let $\dA$ be a  symmetric Schr\"odinger operator of the form \eqref{128} with deficiency indices $(1, 1)$ and locally summable potential in $\calH=L^2[\ell, \infty).$ If $m_\infty(z)$ is the  Weyl-Titchmarsh function of $\dA$, then the Herglotz-Nevanlinna function $(-m_\infty(z))$ can be realized as the impedance function of  a Shr\"odinger L-system $\Theta_{\mu, h}$ of the form \eqref{149} with
\begin{equation}\label{e-35-h-mu}
    \mu=0\quad \textrm{and}\quad h=i.
\end{equation}

Conversely, let $\Theta_{\mu, h}$ be  a Shr\"odinger L-system of the form \eqref{149} with the symmetric operator $\dA$ such that $$V_{\Theta_{\mu, h}}(z)=-m_\infty(z),$$ for all $z\in\dC_\pm$ and $\mu\in\mathbb R\cup\{\infty\}$. Then the parameters $\mu$ and $h$ defining $\Theta_{\mu, h}$ are given by \eqref{e-35-h-mu}, i.e., $\mu=0$ and $h=i$.
\end{theorem}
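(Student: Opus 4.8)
The plan is to reduce everything to the explicit impedance formula \eqref{1501}, which presents $V_{\Theta_{\mu,h}}(z)$ as a rational function of $m_\infty(z)$ with coefficients depending only on $\mu$, $\RE h$, $\IM h$, $|h|^2$. For the direct statement I would simply substitute $\mu=0$ and $h=i$ (so that $\RE h=0$, $\IM h=1$, $|h|^2=1$) into \eqref{1501}: the denominator collapses to $-1$ and the right-hand side becomes $-m_\infty(z)$. Since $\IM i=1>0$, the operator $T_i$ of the form \eqref{131} is dissipative, the $(*)$-extension $\bA_{0,i}$ and the channel operator $K_{0,i}$ are furnished by \eqref{137} and \eqref{148}, so $\Theta_{0,i}$ is a genuine Schr\"odinger L-system of the form \eqref{149}, and it realizes $(-m_\infty(z))$.

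For the converse, I would start from the hypothesis $V_{\Theta_{\mu,h}}(z)=-m_\infty(z)$ for all $z\in\dC_\pm$ and first treat $\mu\in\bbR$. Writing $m=m_\infty(z)$ and clearing denominators in \eqref{1501}, the identity turns into the polynomial relation
\[
(\mu-\RE h)\,m^{2}+\bigl(\IM h+\mu\RE h-|h|^{2}\bigr)\,m+\mu\,\IM h=0,
\]
valid for every $z\in\dC_\pm$. Since $\dA$ has deficiency indices $(1,1)$, the function $(-m_\infty(z))$ is a non-constant Herglotz-Nevanlinna function and hence takes infinitely many values; a polynomial of degree at most two that vanishes at infinitely many points is identically zero, so all three coefficients must vanish. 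From $\mu\,\IM h=0$ together with $\IM h>0$ we obtain $\mu=0$; then $\mu-\RE h=0$ gives $\RE h=0$; and the middle equation reduces to $\IM h=|h|^{2}=(\IM h)^{2}$, forcing $\IM h=1$. Hence $\mu=0$ and $h=i$, which is precisely \eqref{e-35-h-mu}. The remaining possibility $\mu=\infty$ is disposed of by letting $\mu\to\infty$ in \eqref{1501} (divide numerator and denominator by $\mu$), which gives $V_{\Theta_{\infty,h}}(z)=\IM h/(m_\infty(z)+\RE h)$; equating this with $-m_\infty(z)$ would force $m_\infty(z)^{2}+\RE h\,m_\infty(z)+\IM h\equiv 0$, which is impossible since the leading coefficient is $1$.

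The argument is essentially a one-line substitution in one direction and a coefficient comparison in the other, so I expect no serious obstacle. The two points that genuinely require a word of justification are that the vanishing of the rational identity for all $z$ propagates to the vanishing of the coefficients, which uses that $m_\infty$ is non-constant (a consequence of $\dA$ having deficiency indices $(1,1)$), and that the boundary value $\mu=\infty$ must be handled separately because the normalization built into \eqref{1501} degenerates there.
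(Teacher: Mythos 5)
Your proof is correct, and the forward direction is identical to the paper's (substitute $\mu=0$, $h=i$ into \eqref{1501}). The converse is where you diverge in execution, and your version is actually the cleaner of the two. The paper reads the hypothesis as an identity in which $\mu$ may be varied: it first \emph{sets} $\mu=0$ to extract $h=i$ (evaluating at $z=i$ and separating real and imaginary parts of $m_\infty(i)=a-bi$, $b>0$), and only then substitutes $h=i$ back into the general relation to force $\mu=0$ from $\mu\bigl(1+m_\infty^2(z)\bigr)=0$. That first step is logically awkward, since the theorem concerns a single L-system with one fixed $\mu$. You instead keep $\mu$ fixed, clear denominators to obtain a quadratic polynomial identity in $m_\infty(z)$, and kill all three coefficients at once using that $m_\infty$ is non-constant and hence takes infinitely many values; this yields $\mu=0$, $\RE h=0$, $\IM h=1$ in one stroke and avoids any evaluation at a special point. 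You also treat the boundary case $\mu=\infty$ explicitly (where \eqref{1501} degenerates to $\IM h/(m_\infty(z)+\RE h)$), which the paper's converse does not separate out. Both arguments ultimately rest on the same two ingredients --- formula \eqref{1501} and the non-triviality of the Weyl--Titchmarsh function --- so the approaches are the same in substance, but your coefficient comparison is the tighter way to organize the converse.
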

\begin{proof}
Let $\Theta_{\mu, h}$ be  a Shr\"odinger L-system of the form \eqref{149} with our symmetric operator $\dA$. Then its impedance function $V_{\Theta_{\mu, h}}(z)$ is determined by formula \eqref{1501} for any $\mu\in\mathbb R\cup\{\infty\}$ and any non-real $h$. If we set $\mu=0$ and $h=i$ in \eqref{1501} we obtain
$$
V_{\Theta_{0, i}}(z)=\frac{\left(m_\infty(z)+0\right)\cdot 1}{\left(0-0\right)m_\infty(z)+0-1}=-m_\infty(z), \quad z\in\dC_\pm.
$$
Thus, $\Theta_{0, i}$ realizes $(-m_\infty(z))$ and the first part of the theorem is proved.

Conversely, let $\Theta_{\mu, h}$ be  a Shr\"odinger L-system of the form \eqref{149} such that $V_{\Theta_{\mu, h}}(z)=-m_\infty(z).$ Then \eqref{1501} implies
\begin{equation}\label{e-35-prime}
\frac{\left(m_\infty(z)+\mu\right)\IM h}{\left(\mu-\RE h\right)m_\infty(z)+\mu\RE h-|h|^2}=-m_\infty(z),
\end{equation}
for all $z\in\dC_\pm$ and $\mu\in\mathbb R\cup\{\infty\}$. In particular, if we set $\mu=0$ in the above equation we obtain
$$
\frac{\left(m_\infty(z)\right)\IM h}{\left(-\RE h\right)m_\infty(z)-|h|^2}=-m_\infty(z),\quad z\in\dC_\pm
$$
or, taking into account that $m_\infty(z)$ is not identical zero in $\dC_\pm$ (see \cite{Levitan}),
$$
\frac{\IM h}{\left(\RE h\right)m_\infty(z)+|h|^2}=1,\quad z\in\dC_\pm
$$
leading to
\begin{equation}\label{e-36-eq}
    \left(\RE h\right)m_\infty(z)+|h|^2-\IM h=0,\quad z\in\dC_\pm.
\end{equation}
Set $z=i$, then $m_\infty(i)=a-bi$, where $a$ and $b$ are real and $b>0$.  Then \eqref{e-36-eq} yields
$$
\RE h(a-bi)+|h|^2-\IM h=0,
$$
or
$$
a\RE h+|h|^2-\IM h-(b\RE h)i=0.
$$
Thus, the imaginary part of the above must be zero or $b\RE h=0$. Since $b>0$ we have $\RE h=0$ yielding
$$
(\IM h)^2-\IM h=0.
$$
Discarding the case $\IM h=0$, we obtain $\IM h=1$. Consequently, $h=i$. Substituting this value of $h$ into \eqref{e-35-prime} we get
$$
\frac{m_\infty(z)+\mu}{\mu\,m_\infty(z)-1}=-m_\infty(z),\quad z\in\dC_\pm,
$$
yielding $\mu(1+ m_\infty^2(z))=0$. This means that either $\mu=0$ or $-m_\infty(z)\equiv i$. The latter case is impossible as it can be seen from the asymptotic expansion of $m_\infty(z)$ (see \cite{DanLev90}, \cite[Chapter 2]{Levitan}) and therefore $\mu=0$. This proves the second part of the theorem.
\end{proof}
Theorem \ref{t-6} above allows us to explicitly present a Shr\"odinger L-system whose impedance function matches $-m_\infty(z)$. This is
\begin{equation}\label{e-38-sys}
    \Theta_{0, i}= \begin{pmatrix} \bA_{0, i}&K_{0, i}&1\cr \calH_+ \subset
L_2[\ell,+\infty) \subset \calH_-& &\dC\cr \end{pmatrix},
\end{equation}
where
\begin{equation}\label{e-39-star}
\begin{split}
&\bA_{0,i}\, y=-y^{\prime\prime}+q(x)y-i\,[y^{\prime}(\ell)-iy(\ell)]\,\delta^{\prime}(x-\ell), \\
&\bA^*_{0,i}\, y=-y^{\prime\prime}+q(x)y+i\,[y^{\prime}(\ell)+iy(\ell)]\,\delta^{\prime}(x-\ell),
\end{split}
\end{equation}
$K_{0, i}{c}=cg_{0, i}$, $(c\in \dC)$ and
\begin{equation}\label{e-40-g}
g_{0, i}=\delta^{\prime}(x-\ell).
\end{equation}
Clearly,
\begin{equation}\label{e-40-real}
    \RE\bA_{0,i}\,y=-y^{\prime\prime}+q(x)y-y(\ell)\delta^{\prime}(x-\ell),
\end{equation}
and (see also \eqref{e-31})
\begin{equation}\label{e-42-quasi}
    \left\{ \begin{array}{l}
 \hat A_{0,i}\, y=-y^{\prime\prime}+q(x)y \\
 y(\ell)=0 \\
 \end{array} \right.,
\end{equation}
is the quasi-kernel of $\RE\bA_{0,i}$ in \eqref{e-40-real}. Note that \eqref{e-42-quasi} defines a self-adjoint Shr\"odinger operator with \textit{Dirichlet boundary conditions}.  Also,
\begin{equation}\label{e-41-VW}
    V_{\Theta_{0, i}}(z)=-m_\infty(z)\quad \textrm{and }\quad W_{\Theta_{0, i}}(z)=\frac{1+i\,m_\infty(z)}{1-i\,m_\infty(z)}.
\end{equation}
Thus, $ V_{\Theta_{0, i}}(z)$ and hence $-m_\infty(z)$ has the following resolvent representation (see \eqref{e6-3-5})
\begin{equation}\label{e-44-resolvent}
    \begin{aligned}
-m_\infty(z)&= V_{\Theta_{0, i}}(z)=\left((\RE\bA_{0,i}-zI)^{-1}g_{0, i},g_{0,i}\right) \\&=\left(\overline{(\hat A_{0,i}-zI)^{-1}}\,\delta^{\prime}(x-\ell),\delta^{\prime}(x-\ell)\right),
    \end{aligned}
\end{equation}
where $\RE\bA_{0,i}$ is given by \eqref{e-40-real}, $g_{0, i}$ by \eqref{e-40-g}, and $\overline{(\hat A_{0,i}-zI)^{-1}}$ is the extended resolvent of the quasi-kernel $\hat A_{0,i}$ in \eqref{e-42-quasi} (see \cite{ABT}).

Now we can obtain a similar result for the function $1/m_\infty(z)$.
\begin{theorem}\label{t-7}%
Let $\dA$ be a  symmetric Schr\"odinger operator of the form \eqref{128} with deficiency indices $(1, 1)$ and locally summable potential in $\calH=L^2[\ell, \infty).$ If $m_\infty(z)$ is the  Weyl-Titchmarsh function of $\dA$, then the Herglotz-Nevanlinna function $(1/m_\infty(z))$ can be realized as the impedance function of  a Shr\"odinger L-system $\Theta_{\mu, h}$ of the form \eqref{149} with
\begin{equation}\label{e-42-h-mu}
    \mu=\infty\quad \textrm{and}\quad h=i.
\end{equation}

Conversely, let $\Theta_{\mu, h}$ be  a Shr\"odinger L-system of the form \eqref{149} with the symmetric operator $\dA$ such that $$V_{\Theta_{\mu, h}}(z)=\frac{1}{m_\infty(z)},$$ for all $z\in\dC_\pm$ and $\mu\in\mathbb R\cup\{\infty\}$. Then the parameters $\mu$ and $h$ defining $\Theta_{\mu, h}$ are given by \eqref{e-35-h-mu}, i.e., $\mu=\infty$ and $h=i$.
\end{theorem}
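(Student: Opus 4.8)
The plan is to follow the scheme of the proof of Theorem~\ref{t-6}, built on the explicit impedance formula \eqref{1501} and the inversion duality \eqref{e-35-mu-xi}. For the direct half I would first record what \eqref{1501} becomes at $\mu=\infty$: dividing numerator and denominator by $\mu$,
$$
V_{\Theta_{\infty,h}}(z)=\frac{\IM h}{m_\infty(z)+\RE h},
$$
so that the choice $h=i$ gives $V_{\Theta_{\infty,i}}(z)=1/m_\infty(z)$, which is the first assertion. The same conclusion can be reached, and this is the route I would actually write out because it recycles Theorem~\ref{t-6}, by noticing that for $h=i$ the parameter of \eqref{e-31} is $\xi=(\mu\RE h-|h|^2)/(\mu-\RE h)=-1/\mu$, so that $\mu=\infty$ is paired with $\xi=0$; then \eqref{e-35-mu-xi} together with \eqref{e-41-VW} yields
$$
V_{\Theta_{\infty,i}}(z)=-\frac{1}{V_{\Theta_{0,i}}(z)}=-\frac{1}{-m_\infty(z)}=\frac{1}{m_\infty(z)}.
$$

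For the converse, assume $\Theta_{\mu,h}$ is a Schr\"odinger L-system with $V_{\Theta_{\mu,h}}(z)=1/m_\infty(z)$ for all $z\in\dC_\pm$, and split according to whether $\mu$ is finite. If $\mu\in\mathbb R$, clearing denominators in \eqref{1501} turns the identity $V_{\Theta_{\mu,h}}(z)=1/m_\infty(z)$ into
$$
(\IM h)\,m_\infty^2(z)+\bigl(\mu\IM h-\mu+\RE h\bigr)\,m_\infty(z)+\bigl(|h|^2-\mu\RE h\bigr)=0,\qquad z\in\dC_\pm.
$$
Since $\IM h>0$, the leading coefficient does not vanish, so $m_\infty$ would satisfy a quadratic equation with constant coefficients and hence be constant on $\dC_\pm$; this contradicts the asymptotic expansion of $m_\infty(z)$ (see \cite{DanLev90}, \cite[Chapter~2]{Levitan}), exactly as in the last step of the proof of Theorem~\ref{t-6}. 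Therefore $\mu=\infty$, and substituting into the limiting formula above, the identity $1/m_\infty(z)=(\IM h)/(m_\infty(z)+\RE h)$ gives $(\IM h-1)\,m_\infty(z)=\RE h$ for all $z$; non-constancy of $m_\infty$ forces $\RE h=0$ and $\IM h=1$, i.e.\ $h=i$. Hence $\mu=\infty$ and $h=i$, which is \eqref{e-42-h-mu}.

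The one genuinely non-routine point is the passage from a constant-coefficient polynomial identity in $m_\infty(z)$ to the vanishing of its coefficients: this is legitimate precisely because $m_\infty$ is non-constant and solves no nontrivial algebraic relation of degree $\le 2$ with constant coefficients, which is what the cited asymptotics of $m_\infty(z)$ provide. A small bookkeeping item is the limit $\mu\to\infty$ in \eqref{1501}; it is harmless because \eqref{1501} (equivalently \eqref{137}) defines $\Theta_{\mu,h}$ for $\mu\in[-\infty,+\infty]$ and the convergence is locally uniform on $\dC_\pm$ away from the zeros of $m_\infty(z)+\RE h$. Everything else is the same algebra as in Theorem~\ref{t-6}, only with $1/m_\infty$ in place of $-m_\infty$ and with the finite-$\mu$ branch ruled out rather than solved.
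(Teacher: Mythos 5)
Your proposal is correct, and both halves rest on the same two ingredients as the paper's proof: the explicit impedance formula \eqref{1501} and the non-constancy of $m_\infty(z)$ coming from its asymptotics. The direct half is essentially identical (the paper also just evaluates the $\mu\to\infty$ limit of \eqref{1501} at $h=i$; your detour through \eqref{e-35-mu-xi} and Theorem \ref{t-6} is a legitimate alternative but not what the paper writes). Where you genuinely differ is in the organization of the converse. The paper first extracts $h=i$ by ``passing to the limit $\mu\to\infty$'' in the identity \eqref{e-43-prime} and only afterwards rules out finite $\mu$; that limit step implicitly treats the identity as holding for a whole family of $\mu$'s rather than for the one fixed but unknown parameter of the given L-system, which is a slightly awkward reading of the hypothesis. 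You instead split cleanly on whether $\mu$ is finite: in the finite case, clearing denominators produces a quadratic in $m_\infty(z)$ with constant coefficients whose leading coefficient $\IM h>0$ cannot vanish, forcing $m_\infty$ to be constant and yielding a contradiction independently of the value of $h$; only then, with $\mu=\infty$ established, do you solve for $h$ from $(\IM h-1)m_\infty(z)=\RE h$. This ordering is logically tighter and buys you a converse that never needs to vary $\mu$. The one point worth making explicit in your write-up is that the asymptotic expansion of $m_\infty$ rules out \emph{every} constant value (not just $\pm i$, which is all the paper needs at the corresponding step); this is true and is exactly what your quadratic argument requires.
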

\begin{proof}
Let $\Theta_{\mu, h}$ be  a Shr\"odinger L-system of the form \eqref{149} with our symmetric operator $\dA$. Once again,  its impedance function $V_{\Theta_{\mu, h}}(z)$ is determined by formula \eqref{1501} for any $\mu\in\mathbb R\cup\{\infty\}$ and any non-real $h$. If we set $h=i$ and then $\mu=\infty$  in \eqref{1501} we obtain
$$
V_{\Theta_{\infty, i}}(z)=\lim_{\mu\to\infty}\frac{m_\infty(z)+\mu}{\mu\,m_\infty(z)-1}=\frac{1}{m_\infty(z)}, \quad z\in\dC_\pm.
$$
Thus, $\Theta_{\infty, i}$ realizes $(1/m_\infty(z))$ and the first part of the theorem is proved.

Conversely, let $\Theta_{\mu, h}$ be  a Shr\"odinger L-system of the form \eqref{149} such that $V_{\Theta_{\mu, h}}(z)=1/m_\infty(z).$ Then reciprocating \eqref{1501} gives
\begin{equation}\label{e-43-prime}
\frac{\left(\mu-\RE h\right)m_\infty(z)+\mu\RE h-|h|^2}{\left(m_\infty(z)+\mu\right)\IM h}=m_\infty(z),
\end{equation}
for all $z\in\dC_\pm$ and $\mu\in\mathbb R\cup\{\infty\}$. Passing to the limit in \eqref{e-43-prime} when $\mu\to\infty$ yields
$$
\frac{m_\infty(z)+\RE h}{\IM h}=m_\infty(z),\quad z\in\dC_\pm,
$$
or
\begin{equation}\label{e-44-lim}
    m_\infty(z)+\RE h=(\IM h)m_\infty(z),\quad z\in\dC_\pm.
\end{equation}
As in the proof of Theorem \ref{t-6} we set $z=i$ and $m_\infty(i)=a-bi$, where $a$ and $b$ are real and $b>0$.  Then \eqref{e-44-lim} yields
$$
(a-bi)+\RE h=\IM h(a-bi).
$$
Equating real and imaginary parts on both sides gives
$$
a+\RE h=a\IM h\quad \textrm{and} \quad b=b\IM h, \; b>0.
$$
Consequently, $\IM h=1$ and hence $a+\RE h=a$ implying $\RE h=0$. Thus $h=i$. Substituting this value of $h$ in \eqref{e-43-prime} gives
\begin{equation}\label{e-45-contr}
\frac{\mu\,m_\infty(z)-1}{m_\infty(z)+\mu}=m_\infty(z),\quad z\in\dC_\pm.
\end{equation}
If we assume that $\mu$ takes any finite real value, then \eqref{e-45-contr} leads to $1+ m_\infty^2(z)=0$ for all $z\in\dC_\pm$ or $m_\infty(z)\equiv-i$ in the upper half-plane. That is impossible (see \cite{DanLev90}, \cite{Levitan}) and we are reaching a contradiction with the assumption that $\mu$ is finite and real. Thus, the only option is $\mu=\infty$. This proves the second part of the theorem.
\end{proof}

Similarly to Theorem \ref{t-6}, Theorem \ref{t-7} above allows us to explicitly present a Shr\"odinger L-system whose impedance function matches $1/m_\infty(z)$. This is
\begin{equation}\label{e-49-sys}
    \Theta_{\infty, i}= \begin{pmatrix} \bA_{\infty, i}&K_{\infty, i}&1\cr \calH_+ \subset
L_2[\ell,+\infty) \subset \calH_-& &\dC\cr \end{pmatrix},
\end{equation}
where
\begin{equation}\label{e-50-star}
\begin{split}
&\bA_{\infty,i}\, y=-y^{\prime\prime}+q(x)y-\,[y^{\prime}(\ell)-iy(\ell)]\,\delta(x-\ell), \\
&\bA^*_{\infty,i}\, y=-y^{\prime\prime}+q(x)y-\,[y^{\prime}(\ell)+iy(\ell)]\,\delta(x-\ell),
\end{split}
\end{equation}
$K_{\infty, i}{c}=cg_{\infty, i}$, $(c\in \dC)$ and
\begin{equation}\label{e-51-g}
g_{\infty, i}=\delta(x-\ell).
\end{equation}
Clearly,
\begin{equation}\label{e-52-real}
    \RE\bA_{\infty,i}\,y=-y^{\prime\prime}+q(x)y-y'(\ell)\delta(x-\ell),
\end{equation}
and (see also \eqref{e-31})
\begin{equation}\label{e-53-quasi}
    \left\{ \begin{array}{l}
 \hat A_{\infty,i}\, y=-y^{\prime\prime}+q(x)y \\
 y'(\ell)=0 \\
 \end{array} \right.,
\end{equation}
is the quasi-kernel of $\RE\bA_{\infty,i}$ in \eqref{e-52-real}. Note that \eqref{e-53-quasi} defines a self-adjoint Shr\"odinger operator with \textit{Neumann boundary conditions}.

Also,
\begin{equation}\label{e-54-VW}
    V_{\Theta_{\infty, i}}(z)=\frac{1}{m_\infty(z)}\quad \textrm{and }\quad W_{\Theta_{\infty, i}}(z)=-\frac{1+i\,m_\infty(z)}{1-i\,m_\infty(z)}.
\end{equation}
Thus, $ V_{\Theta_{\infty, i}}(z)$ and hence $1/m_\infty(z)$ has the following resolvent representation (see \eqref{e6-3-5})
\begin{equation}\label{e-55-resolvent}
    \begin{aligned}
\frac{1}{m_\infty(z)}&= V_{\Theta_{\infty, i}}(z)=\left((\RE\bA_{\infty,i}-zI)^{-1}g_{\infty, i},g_{\infty, i}\right)\\
&=\left(\overline{(\hat A_{\infty,i}-zI)^{-1}}\,\delta(x-\ell),\delta(x-\ell)\right),
    \end{aligned}
\end{equation}
where $\RE\bA_{\infty,i}$ is given by \eqref{e-52-real}, $g_{\infty, i}$ by \eqref{e-51-g}, and $\overline{(\hat A_{\infty,i}-zI)^{-1}}$ is the extended resolvent of the quasi-kernel $\hat A_{\infty,i}$ in \eqref{e-53-quasi} (see \cite{ABT}).

We note that both L-systems $\Theta_{0,i}$ in \eqref{e-38-sys} and $\Theta_{\infty,i}$ in \eqref{e-49-sys} share the same main operator
\begin{equation}\label{e-56-T}
    \left\{ \begin{array}{l}
 T_{i}\, y=-y^{\prime\prime}+q(x)y \\
 y'(\ell)=i\,y(\ell) \\
 \end{array} \right..
\end{equation}

\section{Functions $m_\alpha(z)$}\label{s6}

Let  $\dA$ be a symmetric operator  of the form \eqref{128} with deficiency indices (1,1), generated by the differential operation $l(y)=-y^{\prime\prime}+q(x)y$. Let also $\varphi_\alpha(x,{z})$ and $\theta_\alpha(x,{z})$ be the solutions of the following Cauchy problems:
$$\left\{ \begin{array}{l}
 l(\varphi_\alpha)={z} \varphi_\alpha \\
 \varphi_\alpha(\ell,{z})=\sin\alpha \\
 \varphi'_\alpha(\ell,{z})=-\cos\alpha \\
 \end{array} \right., \qquad
\left\{ \begin{array}{l}
 l(\theta_\alpha)={z} \theta_\alpha \\
 \theta_\alpha(\ell,{z})=\cos\alpha \\
 \theta'_\alpha(\ell,{z})=\sin\alpha \\
 \end{array} \right.. $$
It is  known \cite{DanLev90}, \cite{Na68}, \cite{Ti62} that there exists an analytic in $\dC_\pm$  function $m_\alpha({z})$  for which
\begin{equation}\label{e-62-psi}
\psi(x,{z})=\theta_\alpha(x,{z})+m_\alpha({z})\varphi_\alpha(x,{z})
\end{equation}
belongs to $L_2[\ell,+\infty)$. It is easy to see that if $\alpha=\pi$, then $m_\pi({z})=m_\infty({z})$. The functions $ m_\alpha({z})$ and $m_\infty(z)$ are connected (see \cite{DanLev90}, \cite{Ti62}) by
\begin{equation}\label{e-59-LFT}
    m_\alpha({z})=\frac{\sin\alpha+m_\infty({z})\cos\alpha}{\cos\alpha-m_\infty({z})\sin\alpha}.
\end{equation}
Indeed, for any fixed ${z}\in\dC_\pm$ and any $\alpha$ we have that $\psi(x,{z})$ belongs to one-dimensional  deficiency subspace $\sN_{{z}}=\Ker (\dA^*-{z} I)$. Hence,
$$
\theta_\alpha(x,{z})+m_\alpha({z})\varphi_\alpha(x,{z})=C({z})(\theta_\pi(x,{z})+m_\infty({z})\varphi_\pi(x,{z})),\quad x\in[\ell,+\infty),
$$
where $C({z})$ is independent of $x$. Setting $x=\ell$, then differentiating and plugging in $x=\ell$ again, we obtain
$$
\begin{aligned}
\cos\alpha+m_\alpha({z})\sin\alpha&=C({z}),\\
\sin\alpha-m_\alpha({z})\cos\alpha&=-C({z})m_\infty({z}).
\end{aligned}
$$
Eliminating $C({z})$ we obtain \eqref{e-59-LFT}.

We know \cite{Na68}, \cite{Ti62} that for any real $\alpha$ the function $-m_\alpha({z})$ is a Herglotz-Nevanlinna function. Also, modifying \eqref{e-59-LFT} slightly we obtain
\begin{equation}\label{e-61-Don}
    -m_\alpha(z)=\frac{\sin\alpha+m_\infty(z)\cos\alpha}{-\cos\alpha+m_\infty(z)\sin\alpha}
    =\frac{\cos\alpha+\frac{1}{m_\infty(z)}\sin\alpha}{\sin\alpha-\frac{1}{m_\infty(z)}\cos\alpha}.
\end{equation}
Now we are going to state and prove the realization theorem for Herglotz-Nevanlinna functions $-m_\alpha(z)$ that is similar to Theorem \ref{t-6}.
\begin{theorem}\label{t-8}%
Let $\dA$ be a  symmetric Schr\"odinger operator of the form \eqref{128} with deficiency indices $(1, 1)$ and locally summable potential in $\calH=L^2[\ell, \infty).$ If $m_\alpha(z)$ is the  function of $\dA$ described in \eqref{e-62-psi}, then the Herglotz-Nevanlinna function $(-m_\alpha(z))$ can be realized as the impedance function of  a Shr\"odinger L-system $\Theta_{\mu, h}$ of the form \eqref{149} with
\begin{equation}\label{e-62-h-mu}
    \mu=\tan\alpha\quad \textrm{and}\quad h=i.
\end{equation}

Conversely, let $\Theta_{\mu, h}$ be  a Shr\"odinger L-system of the form \eqref{149} with the symmetric operator $\dA$ such that $$V_{\Theta_{\mu, h}}(z)=-m_\alpha(z),$$ for all $z\in\dC_\pm$ and $\mu\in\mathbb R\cup\{\infty\}$. Then the parameters $\mu$ and $h$ defining $\Theta_{\mu, h}$ are given by \eqref{e-62-h-mu}, i.e., $\mu=\tan\alpha$ and $h=i$.
\end{theorem}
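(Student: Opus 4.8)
The plan is to exploit the linear-fractional relation \eqref{e-59-LFT} together with the Donoghue-transform machinery already developed in Lemma \ref{l-2}. The key observation is that \eqref{e-61-Don} exhibits $(-m_\alpha(z))$ as precisely the Donoghue transform \eqref{e-64-alpha} of $(-m_\infty(z))$ with the very same angle parameter $\alpha$; that is, writing $V_\Theta(z)=-m_\infty(z)$ one has $(-m_\alpha(z)) = \dfrac{\cos\alpha+(\sin\alpha)(-m_\infty(z))}{\sin\alpha-(\cos\alpha)(-m_\infty(z))}$. By Theorem \ref{t-6}, $(-m_\infty(z))$ is realized by $\Theta_{0,i}$. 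So for the first (existence) part I would invoke Lemma \ref{l-2} with $\mu=0$, $h=i$: the L-system $\Theta_{\mu(\alpha),h}$ whose impedance function is the Donoghue transform of $V_{\Theta_{0,i}}$ has impedance function $(-m_\alpha(z))$, and its parameter is given by \eqref{e-23-mu-alpha}. It remains only to compute $\mu(\alpha)$ from \eqref{e-23-mu-alpha} when $\mu=0$ and $h=i$ (so $\bar h=-i$): substituting gives $\mu(\alpha)=\dfrac{i(-(-i))+e^{2i\alpha}(-i)(-i)}{-(-i)+e^{2i\alpha}(-i)} = \dfrac{i\cdot i + e^{2i\alpha}(-1)}{i + e^{2i\alpha}(-i)}$, and a short simplification (multiplying through and using $e^{2i\alpha}=\cos 2\alpha + i\sin 2\alpha$, or better, factoring) should collapse this to $\tan\alpha$. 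Thus $\Theta_{\tan\alpha,\,i}$ realizes $(-m_\alpha(z))$, proving the direct statement, with $h=i$ confirmed.

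For the converse, I would proceed exactly as in the proof of Theorem \ref{t-6}. Assume $V_{\Theta_{\mu,h}}(z)=-m_\alpha(z)$ for all $z\in\dC_\pm$, with the explicit formula \eqref{1501} for $V_{\Theta_{\mu,h}}$. Using \eqref{e-59-LFT} to rewrite $m_\alpha$ in terms of $m_\infty$, I get a rational identity in $m_\infty(z)$ that must hold identically on $\dC_\pm$; since $m_\infty(z)$ is non-constant (indeed takes infinitely many values, as follows from its asymptotic expansion, cf.\ \cite{DanLev90}, \cite{Levitan}), I can equate coefficients. Concretely, the cleanest route is to first determine $h$: plug in a convenient point, say $z=i$ with $m_\infty(i)=a-bi$, $b>0$, and separate real and imaginary parts exactly as in Theorem \ref{t-6}; I expect the imaginary-part equation to force $\RE h=0$ and then the real-part equation to force $\IM h=1$, giving $h=i$. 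Once $h=i$ is pinned down, substitute back into \eqref{1501} to obtain $\dfrac{m_\infty(z)+\mu}{\mu\,m_\infty(z)-1}=-m_\alpha(z)=\dfrac{\cos\alpha+\frac{1}{m_\infty}\sin\alpha}{\sin\alpha-\frac{1}{m_\infty}\cos\alpha}$ (using \eqref{e-61-Don}), clear denominators, and read off that the coefficients match only when $\mu=\tan\alpha$ (the alternative being $1+m_\infty^2(z)\equiv 0$, i.e.\ $m_\infty\equiv \pm i$, which is impossible by the asymptotics).

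One technical wrinkle worth flagging: the cases $\alpha=\pi/2$ (where $\tan\alpha=\infty$) and, more generally, the boundary values $\mu\in\{-\infty,+\infty\}$ should be handled by the same limiting argument used in Theorem \ref{t-7}, passing to the limit $\mu\to\infty$ in \eqref{1501}; when $\alpha=\pi/2$ formula \eqref{e-59-LFT} gives $m_{\pi/2}(z)=-1/m_\infty(z)$, consistent with $\mu=\infty$ matching Theorem \ref{t-7}.

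I expect the main obstacle to be purely computational rather than conceptual: verifying cleanly that \eqref{e-23-mu-alpha} with $(\mu,h)=(0,i)$ simplifies to $\mu(\alpha)=\tan\alpha$, and symmetrically that the converse coefficient comparison yields exactly $\mu=\tan\alpha$ with no spurious $z$-dependence. The conceptual content is entirely supplied by \eqref{e-61-Don} and Lemma \ref{l-2}; once those are in hand the theorem is a corollary of Theorem \ref{t-6} plus bookkeeping. A secondary minor point is making the ``$m_\infty$ is non-constant'' step rigorous, but this is already used (and cited) in the proofs of Theorems \ref{t-6} and \ref{t-7}, so I would simply refer to \cite{DanLev90}, \cite[Chapter 2]{Levitan} as done there.
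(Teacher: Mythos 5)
There is a genuine error in your existence argument. Your ``key observation'' --- that \eqref{e-61-Don} exhibits $(-m_\alpha(z))$ as the Donoghue transform of $(-m_\infty(z))$ --- is false. By \eqref{e-59-LFT}, substituting $V=-m_\infty(z)$ into \eqref{e-64-alpha} gives
$\frac{\cos\alpha-(\sin\alpha)m_\infty(z)}{\sin\alpha+(\cos\alpha)m_\infty(z)}=1/m_\alpha(z)$,
not $-m_\alpha(z)$; what \eqref{e-61-Don} actually says is that $(-m_\alpha(z))$ is the Donoghue transform of $1/m_\infty(z)=V_{\Theta_{\infty,i}}(z)$. Your own computation betrays the mistake: with $(\mu,h)=(0,i)$ formula \eqref{e-23-mu-alpha} gives
$\mu(\alpha)=\frac{-1-e^{2i\alpha}}{i\left(1-e^{2i\alpha}\right)}=\frac{-2e^{i\alpha}\cos\alpha}{i\cdot(-2i)e^{i\alpha}\sin\alpha}=-\cot\alpha$,
which does \emph{not} ``collapse to $\tan\alpha$''; it is exactly the parameter of the L-system realizing $1/m_\alpha(z)$ in Theorem \ref{t-9}. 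The paper's proof starts instead from $\Theta_{\infty,i}$ (Theorem \ref{t-7}), uses \eqref{e-61-Don} to recognize $(-m_\alpha)$ as the Donoghue transform of $V_{\Theta_{\infty,i}}$ (hence realized by some $\Theta_{\mu_\alpha,i}$ with the same main operator $T_i$), and then identifies $\mu_\alpha=\tan\alpha$ by the direct comparison \eqref{e-64-need}; equivalently, the limit $\mu\to\infty$ in \eqref{e-23-mu-alpha} with $h=i$ yields $\frac{h+e^{2i\alpha}\bar h}{1+e^{2i\alpha}}=\tan\alpha$.

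Your converse argument is sound and essentially matches the paper's method (the paper is in fact terser: it takes $h=i$ for granted from the shared main operator and only extracts $\mu=\tan\alpha$ from \eqref{e-64-need}, whereas you also pin down $h=i$ by evaluating at $z=i$ as in Theorem \ref{t-6}, which is a reasonable addition). Note that carrying out that coefficient comparison honestly --- $\frac{m_\infty(z)+\mu}{\mu m_\infty(z)-1}=\frac{m_\infty(z)+\tan\alpha}{m_\infty(z)\tan\alpha-1}$ forces $\mu=\tan\alpha$ unless $m_\infty\equiv-i$ --- already proves the existence part as well, so your proof is salvageable; but as written, the route through Lemma \ref{l-2} anchored at $\Theta_{0,i}$ would deliver the wrong parameter $-\cot\alpha$ and hence the wrong L-system.
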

\begin{proof}
As we have shown in Section \ref{s5}, the function $1/m_\infty(z)$ can be realized as the impedance function of the Shr\"odinger L-system $\Theta_{\infty,i}$ of the form \eqref{e-49-sys}, i.e. $1/m_\infty(z)=V_{\Theta_{\infty, i}}(z)$ for all $z\in\dC_\pm$. Consequently, \eqref{e-61-Don} yields
\begin{equation}\label{e-62-Donn}
     -m_\alpha(z)=\frac{\cos\alpha+V_{\Theta_{\infty, i}}(z)\sin\alpha}{\sin\alpha-V_{\Theta_{\infty, i}}(z)\cos\alpha}, \quad z\in\dC_\pm.
\end{equation}
But the right hand side of \eqref{e-62-Donn} is exactly the Donoghue transform \eqref{e-64-alpha} of the impedance function $V_{\Theta_{\infty, i}}(z)$ and thus represents the impedance function of a Shr\"odinger L-system with the same main operator $T_i$ of the form \eqref{e-56-T} (see \cite[Section 8.3]{ABT}). That is,
$$
-m_\alpha(z)=V_{\Theta_{\mu_\alpha, i}}(z),
$$
where $\mu_\alpha$ is a real parameter we need to find to describe the Shr\"odinger L-system $\Theta_{\mu_\alpha, i}$ realizing $-m_\alpha(z)$. In order to do that we utilize \eqref{1501}  once more. Substituting the value of $h=i$ into \eqref{1501} and applying \eqref{e-61-Don} we get
\begin{equation}\label{e-64-need}
V_{\Theta_{\mu_\alpha, i}}(z)=\frac{m_\infty(z)+\mu_\alpha}{\mu_\alpha\,m_\infty(z)-1}=-m_\alpha(z)=\frac{\sin\alpha+m_\infty(z)\cos\alpha}{-\cos\alpha+m_\infty(z)\sin\alpha},\quad z\in\dC_\pm,
\end{equation}
or, after dividing the second fraction by $\cos\alpha$,
$$
\frac{m_\infty(z)+\mu_\alpha}{\mu_\alpha\,m_\infty(z)-1}=\frac{m_\infty(z)+\tan\alpha}{m_\infty(z)\tan\alpha-1},\quad z\in\dC_\pm.
$$
Solving the above for $\mu_\alpha$ leads to that either $m_\infty(z)\equiv -i$ in $\dC_+$ (which is impossible \cite{DanLev90}, \cite{Levitan}) or
\begin{equation}\label{e-63-mu-alpha}
    \mu_\alpha=\tan\alpha.
\end{equation}
Therefore, if we set $\mu=\tan\alpha$ and $h=i$ in a Shr\"odinger L-system $\Theta_{\mu, h}$ of the form \eqref{149}, then according to the above derivations we obtain $V_{\Theta_{\mu, i}}(z)=-m_\alpha(z)$.

Conversely, if $\Theta_{\mu, h}$ is  a Shr\"odinger L-system of the form \eqref{149}  with our symmetric operator $\dA$ such that $V_{\Theta_{\mu, h}}(z)=-m_\alpha(z),$ then  \eqref{e-64-need} takes place and implies \eqref{e-63-mu-alpha}, that is $\mu=\mu_\alpha=\tan\alpha$ as shown above. The proof is complete.
\end{proof}
We note that when $\alpha=\pi$ we obtain $\mu_\alpha=0$, $m_\pi(z)=m_\infty(z)$, and the realizing Shr\"odinger L-system $\Theta_{0,i}$ is thoroughly described by \eqref{e-38-sys} in Section \ref{s5}. If $\alpha=\pi/2$, then  we get $\mu_\alpha=\infty$, $-m_\alpha(z)=1/m_\infty(z)$, and the realizing Shr\"odinger L-system $\Theta_{\infty,i}$ is given by \eqref{e-49-sys} in Section \ref{s5}. Assuming that $\alpha\in(0,\pi]$ and  neither $\alpha=\pi$ nor $\alpha=\pi/2$ we give the description of a Shr\"odinger L-system $\Theta_{\mu_\alpha,i}$ realizing $-m_\alpha(z)$ as follows.
\begin{equation}\label{e-64-sys}
    \Theta_{\tan\alpha, i}= \begin{pmatrix} \bA_{\tan\alpha, i}&K_{\tan\alpha, i}&1\cr \calH_+ \subset
L_2[\ell,+\infty) \subset \calH_-& &\dC\cr \end{pmatrix},
\end{equation}
where
\begin{equation}\label{e-65-star}
\begin{split}
&\bA_{\tan\alpha,i}\, y=l(y)-\frac{1}{\tan\alpha-i}[y^{\prime}(\ell)-iy(\ell)][(\tan\alpha)\delta(x-\ell)+\delta^{\prime}(x-\ell)], \\
&\bA^*_{\tan\alpha,i}\, y=l(y)-\frac{1}{\tan\alpha+i}\,[y^{\prime}(\ell)+iy(\ell)][(\tan\alpha)\delta(x-\ell)+\delta^{\prime}(x-\ell)],
\end{split}
\end{equation}
$K_{\tan\alpha, i}\,{c}=c\,g_{\tan\alpha, i}$, $(c\in \dC)$ and
\begin{equation}\label{e-66-g}
g_{\tan\alpha, i}=(\tan\alpha)\delta(x-\ell)+\delta^{\prime}(x-\ell).
\end{equation}
Clearly,
\begin{equation}\label{e-67-real}
    \RE\bA_{\tan\alpha,i}\,=l(y)-(\cos^2\alpha)[(\tan\alpha)y'(\ell)+y(\ell)][(\tan\alpha)\delta(x-\ell)+\delta^{\prime}(x-\ell)],
\end{equation}
and (see also \eqref{e-31})
\begin{equation}\label{e-70-quasi}
    \left\{ \begin{array}{l}
 \hat A_{\tan\alpha,i}\, y=-y^{\prime\prime}+q(x)y \\
 y(\ell)=-(\tan\alpha)\,y'(\ell) \\
 \end{array} \right.,
\end{equation}
is the quasi-kernel of $\RE\bA_{\tan\alpha,i}$ in \eqref{e-67-real}.  Also,
\begin{equation}\label{e-71-VW}
\begin{aligned}
    V_{\Theta_{\tan\alpha, i}}(z)&=-m_\alpha(z)\\
     W_{\Theta_{\tan\alpha, i}}(z)&=\frac{\tan\alpha-i}{\tan\alpha+i}\cdot\frac{m_\infty(z)-i}{m_\infty(z)+i}=(-e^{2\alpha i})\,\frac{m_\infty(z)-i}{m_\infty(z)+i}.
    \end{aligned}
\end{equation}
Thus, $ V_{\Theta_{\tan\alpha, i}}(z)$ and hence $-m_\alpha(z)$ has the following resolvent representation
\begin{equation}\label{e-72-resolvent}
    \begin{aligned}
&-m_\infty(z)= V_{\Theta_{\tan\alpha, i}}(z)=\left((\RE\bA_{\tan\alpha,i}-zI)^{-1}g_{\tan\alpha, i},g_{\tan\alpha,i}\right) \\&=\left(\overline{(\hat A_{\tan\alpha,i}-zI)^{-1}}\,[(\tan\alpha)\delta(x-\ell)+\delta^{\prime}(x-\ell)],(\tan\alpha)\delta(x-\ell)+\delta^{\prime}(x-\ell)\right),
    \end{aligned}
\end{equation}
where $\RE\bA_{\tan\alpha,i}$ is given by \eqref{e-67-real}, $g_{\tan\alpha, i}$ by \eqref{e-66-g}, and $\overline{(\hat A_{\tan\alpha,i}-zI)^{-1}}$ is the extended resolvent of the quasi-kernel $\hat A_{\tan\alpha,i}$ in \eqref{e-70-quasi} (see \cite{ABT}).

Now we are going to state and prove the realization theorem for Herglotz-Nevanlinna functions $1/m_\alpha(z)$ that is similar to Theorem \ref{t-7}.
\begin{theorem}\label{t-9}%
Let $\dA$ be a  symmetric Schr\"odinger operator of the form \eqref{128} with deficiency indices $(1, 1)$ and locally summable potential in $\calH=L^2[\ell, \infty).$ If $m_\alpha(z)$ is the  function of $\dA$ described in \eqref{e-62-psi}, then the Herglotz-Nevanlinna function $1/m_\alpha(z)$ can be realized as the impedance function of  a Shr\"odinger L-system $\Theta_{\mu, h}$ of the form \eqref{149} with
\begin{equation}\label{e-74-h-mu}
    \mu=-\cot\alpha\quad \textrm{and}\quad h=i.
\end{equation}

Conversely, let $\Theta_{\mu, h}$ be  a Shr\"odinger L-system of the form \eqref{149} with the symmetric operator $\dA$ such that $$V_{\Theta_{\mu, h}}(z)=\frac{1}{m_\alpha(z)},$$ for all $z\in\dC_\pm$ and $\mu\in\mathbb R\cup\{\infty\}$. Then the parameters $\mu$ and $h$ defining $\Theta_{\mu, h}$ are given by \eqref{e-62-h-mu}, i.e., $\mu=-\cot\alpha$ and $h=i$.
\end{theorem}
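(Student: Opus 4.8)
The plan is to mirror the structure of the proof of Theorem~\ref{t-8}, but now using the realization of $-m_\infty(z)$ from Theorem~\ref{t-6} together with the second form of the linear-fractional transformation. First I would record the identity obtained from \eqref{e-59-LFT} by taking reciprocals and dividing numerator and denominator by $\sin\alpha$ (assuming for the moment $\alpha\neq 0,\pi$), namely
\begin{equation*}
\frac{1}{m_\alpha(z)}=\frac{\cos\alpha-m_\infty(z)\sin\alpha}{\sin\alpha+m_\infty(z)\cos\alpha}
=\frac{-\cot\alpha+(-m_\infty(z))}{1-\cot\alpha\,(-m_\infty(z))}\cdot(-1)\cdot\frac{1}{\text{(sign bookkeeping)}},
\end{equation*}
the precise point being to rewrite $1/m_\alpha(z)$ as a Donoghue transform \eqref{e-64-alpha} of $(-m_\infty(z))=V_{\Theta_{0,i}}(z)$. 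Since by Theorem~\ref{t-6} the function $-m_\infty(z)$ is realized by $\Theta_{0,i}$ with main operator $T_i$, the general theory quoted after \eqref{e-64-alpha} (and in \cite[Section 8.3]{ABT}) guarantees that this Donoghue transform is again the impedance function of a Shr\"odinger L-system $\Theta_{\mu_\alpha,i}$ sharing the same main operator $T_i$.

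The second step is to pin down $\mu_\alpha$ explicitly. I would substitute $h=i$ into the impedance formula \eqref{1501}, getting $V_{\Theta_{\mu,i}}(z)=\dfrac{m_\infty(z)+\mu}{\mu\,m_\infty(z)-1}$, set this equal to $1/m_\alpha(z)=\dfrac{\cos\alpha-m_\infty(z)\sin\alpha}{\sin\alpha+m_\infty(z)\cos\alpha}$, cross-multiply, and collect powers of $m_\infty(z)$. As in the proof of Theorem~\ref{t-8}, the coefficient-matching either forces the impossible identity $1+m_\infty^2(z)\equiv 0$ (ruled out by the asymptotics of $m_\infty$, \cite{DanLev90}, \cite[Chapter 2]{Levitan}) or yields $\mu=-\cot\alpha$. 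Dividing the target fraction through by $\cos\alpha$ makes the comparison cleanest: $1/m_\alpha(z)=\dfrac{\tan\alpha-m_\infty(z)}{1+m_\infty(z)\tan\alpha}$, which matches $\dfrac{m_\infty(z)+\mu}{\mu\,m_\infty(z)-1}$ exactly when $\mu=-\cot\alpha$. For the converse, the same computation starting from the assumption $V_{\Theta_{\mu,h}}(z)=1/m_\alpha(z)$ forces first $h=i$ (by the $z=i$, $m_\infty(i)=a-bi$, $b>0$ argument used verbatim in Theorems~\ref{t-6} and \ref{t-7}) and then $\mu=-\cot\alpha$.

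The remaining step is to handle the degenerate values of $\alpha$ separately: when $\alpha=\pi/2$ one has $-\cot\alpha=0$ and $1/m_\alpha(z)=1/m_{\pi/2}(z)$, which should be cross-checked against the $\Theta_{0,i}$ realization of $-m_\infty(z)$ via \eqref{e-59-LFT}; when $\alpha=\pi$ (or $\alpha\to 0$) one has $\cot\alpha\to\infty$, $m_\pi(z)=m_\infty(z)$, and $1/m_\alpha(z)=1/m_\infty(z)$ is realized by $\Theta_{\infty,i}$ from Theorem~\ref{t-7}, consistent with $\mu=-\cot\alpha\to\infty$. I expect the main obstacle to be purely bookkeeping: keeping the signs and the $\infty$-limits in \eqref{e-23-mu-alpha}/\eqref{e-64-alpha} consistent across the edge cases, and making sure the Donoghue-transform parameter $\alpha$ appearing in \eqref{e-64-alpha} is correctly identified with (a shift of) the boundary-condition angle $\alpha$ in \eqref{e-62-psi}; the rational-function algebra itself is routine once $h=i$ is fixed, exactly as in the proof of Theorem~\ref{t-8}.
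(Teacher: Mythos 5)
Your proposal is correct and arrives at the right answer, but it takes a genuinely different (though parallel) route from the paper. The paper's existence proof starts from Theorem~\ref{t-8}: since $(-m_\alpha(z))=V_{\Theta_{\tan\alpha,i}}(z)$, it applies the negative-reciprocal relation \eqref{e-35-mu-xi} (the $\alpha=0$ instance of Lemma~\ref{l-2}) to pass from $\Theta_{\tan\alpha,i}$ to $\Theta_{\xi,i}$ with $\xi=-1/\tan\alpha=-\cot\alpha$, with no further computation. You instead base the argument at $\Theta_{0,i}$ from Theorem~\ref{t-6} and observe that $1/m_\alpha(z)$ is a Donoghue transform of $(-m_\infty(z))=V_{\Theta_{0,i}}(z)$; this is equally valid and in fact cleaner than you suggest: comparing \eqref{e-64-alpha} with $V_\Theta=-m_\infty$ against the reciprocal of \eqref{e-59-LFT} shows that $1/m_\alpha$ \emph{is} the Donoghue transform of $-m_\infty$ with the \emph{same} parameter $\alpha$ and no extra sign, so your ``(sign bookkeeping)'' factor in the first display should simply be deleted; Lemma~\ref{l-2} with $\mu=0$, $h=i$ then returns $\mu(\alpha)=-\cot\alpha$ directly. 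Both routes rest on the same machinery from \cite[Section 8.3]{ABT}, so neither buys more generality; the paper's is shorter because Theorem~\ref{t-8} has already done the Donoghue step.

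Two points to repair in the write-up. First, your displayed identity after ``dividing through by $\cos\alpha$'' is mis-transcribed: dividing $\frac{\cos\alpha-m_\infty\sin\alpha}{\sin\alpha+m_\infty\cos\alpha}$ by $\cos\alpha$ gives $\frac{1-m_\infty\tan\alpha}{\tan\alpha+m_\infty}$ (and by $\sin\alpha$ gives $\frac{\cot\alpha-m_\infty}{1+m_\infty\cot\alpha}$), not $\frac{\tan\alpha-m_\infty}{1+m_\infty\tan\alpha}$; the fraction you wrote would match $V_{\Theta_{\mu,i}}$ at $\mu=-\tan\alpha$, contradicting your own conclusion. The correct cross-multiplication yields $(1+m_\infty^2(z))(1+\mu\tan\alpha)=0$ and hence $\mu=-\cot\alpha$, as claimed. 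Second, for the converse your appeal to the $z=i$ argument of Theorems~\ref{t-6} and \ref{t-7} ``verbatim'' is optimistic: those proofs exploit the freedom to set $\mu=0$ or let $\mu\to\infty$, which does not transfer to $1/m_\alpha$ for general $\alpha$. The paper short-circuits this by noting that a realizing system must share the main operator with $\Theta_{\tan\alpha,i}$, forcing $h=i$ at once; alternatively, matching all three coefficients of the quadratic identity in $m_\infty(z)$ obtained from \eqref{1501} does force $\IM h=1$, $\RE h=0$, $\mu=-\cot\alpha$, but that computation should be carried out rather than cited.
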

\begin{proof}
On order to prove the first part of the theorem we simply observe that the functions $(-m_\alpha(z))$ and $1/m_\alpha(z)$ are connected via similar to the middle part of \eqref{e-35-mu-xi} relation. Hence, if, according to Theorem \ref{t-6}, $(-m_\alpha(z))$ is realized by an L-system \eqref{e-64-sys} or (see \eqref{e-71-VW})
$$
-m_\alpha(z)=V_{\Theta_{\tan\alpha, i}}(z),
$$
then $1/m_\alpha(z)$ is realized \cite[Section 10.2]{ABT} by an Shr\"odinger L-system $\Theta_{\xi, h}$ with the same parameter $h=i$ and the parameter $\xi$ related to $\mu=\tan\alpha$ by the right part of \eqref{e-35-mu-xi}. This gives
$$
\xi=\frac{\mu\RE h-|h|^2}{\mu-\RE h}=\frac{-1}{\tan\alpha}=-\cot\alpha.
$$
Thus, $\Theta_{-\cot\alpha, i}$ realizes $1/m_\alpha(z)$.

 Conversely, if $\Theta_{\mu, h}$ is  a Shr\"odinger L-system of the form \eqref{149}  with the symmetric operator $\dA$ such that $V_{\Theta_{\mu, h}}(z)=1/m_\alpha(z),$ then $\Theta_{\mu, h}$ shares the same main operator with $\Theta_{\tan\alpha, i}$ and hence $h=i$. Moreover, (see \eqref{1501} and \eqref{e-59-LFT})
$$
V_{\Theta_{\mu,i}}(z)=\frac{m_\infty(z)+\mu_\alpha}{\mu_\alpha\,m_\infty(z)-1}=\frac{1}{m_\alpha(z)}=\frac{\cos\alpha-m_\infty({z})\sin\alpha}{\sin\alpha+m_\infty({z})\cos\alpha},\quad z\in\dC_\pm.
$$
Solving the above for $\mu_\alpha$ leads to that either $m_\infty(z)\equiv -i$ in $\dC_+$ (which is impossible \cite{DanLev90}, \cite{Levitan}) or
$$
    \mu=\mu_\alpha=-\cot\alpha.
$$
The proof is complete.
\end{proof}
Assuming again that $\alpha\in(0,\pi)$ and  $\alpha\ne\pi/2$ we give the description of a Shr\"odinger L-system $\Theta_{\mu_\alpha,i}$ realizing $1/m_\alpha(z)$ as follows.
\begin{equation}\label{e-75-sys}
    \Theta_{(-\cot\alpha), i}= \begin{pmatrix} \bA_{(-\cot\alpha), i}&K_{(-\cot\alpha), i}&1\cr \calH_+ \subset
L_2[\ell,+\infty) \subset \calH_-& &\dC\cr \end{pmatrix},
\end{equation}
where
\begin{equation}\label{e-76-star}
\begin{split}
&\bA_{(-\cot\alpha),i}\, y=l(y)+\frac{1}{\cot\alpha+i}[y^{\prime}(\ell)-iy(\ell)][(-\cot\alpha)\delta(x-\ell)+\delta^{\prime}(x-\ell)], \\
&\bA^*_{(-\cot\alpha),i}\, y=l(y)+\frac{1}{\cot\alpha-i}\,[y^{\prime}(\ell)+iy(\ell)][(-\cot\alpha)\delta(x-\ell)+\delta^{\prime}(x-\ell)],
\end{split}
\end{equation}
$K_{(-\cot\alpha), i}\,{c}=c\,g_{(-\cot\alpha), i}$, $(c\in \dC)$ and
\begin{equation}\label{e-77-g}
g_{(-\cot\alpha), i}=(-\cot\alpha)\delta(x-\ell)+\delta^{\prime}(x-\ell).
\end{equation}
Clearly,
\begin{equation}\label{e-78-real}
    \RE\bA_{(-\cot\alpha),i}\,=l(y)+(\sin^2\alpha)[(\cot\alpha)y'(\ell)-y(\ell)][(-\cot\alpha)\delta(x-\ell)+\delta^{\prime}(x-\ell)],
\end{equation}
and (see also \eqref{e-31})
\begin{equation}\label{e-79-quasi}
    \left\{ \begin{array}{l}
 \hat A_{(-\cot\alpha),i}\, y=-y^{\prime\prime}+q(x)y \\
 y(\ell)=(\cot\alpha)\,y'(\ell) \\
 \end{array} \right.,
\end{equation}
is the quasi-kernel of $\RE\bA_{(-\cot\alpha),i}$ in \eqref{e-67-real}.  Also,
\begin{equation}\label{e-80-VW}
\begin{aligned}
    V_{\Theta_{(-\cot\alpha), i}}(z)&=\frac{1}{m_\alpha(z)}\\
     W_{\Theta_{(-\cot\alpha), i}}(z)&=\frac{\cot\alpha+i}{\cot\alpha-i}\cdot\frac{m_\infty(z)-i}{m_\infty(z)+i}=(e^{2\alpha i})\,\frac{m_\infty(z)-i}{m_\infty(z)+i}.
    \end{aligned}
\end{equation}
Thus, $ V_{\Theta_{(-\cot\alpha), i}}(z)$ and hence $1/m_\alpha(z)$ has the following resolvent representation
\begin{equation}\label{e-81-resolvent}
    \begin{aligned}
&1/m_\alpha(z)= V_{\Theta_{(-\cot\alpha), i}}(z)=\left((\RE\bA_{(-\cot\alpha),i}-zI)^{-1}g_{(-\cot\alpha), i},g_{(-\cot\alpha),i}\right) \\&=\left(\overline{(\hat A_{(-\cot\alpha),i}-zI)^{-1}}\,[\delta^{\prime}(x-\ell)-(\cot\alpha)\delta(x-\ell)],\delta^{\prime}(x-\ell)\right.
\\&\qquad\qquad\left.-(\cot\alpha)\delta(x-\ell)\right),
    \end{aligned}
\end{equation}
where $\RE\bA_{(-\cot\alpha),i}$ is given by formula \eqref{e-78-real}, $g_{(-\cot\alpha), i}$ by \eqref{e-77-g}, and \break $\overline{(\hat A_{(-\cot\alpha),i}-zI)^{-1}}$ is the extended resolvent of the quasi-kernel $\hat A_{(-\cot\alpha),i}$ in \eqref{e-79-quasi} (see \cite{ABT}).

We conclude the section with the following result.
\begin{theorem}\label{t-7n}%
Let  $\Theta_{\mu, i}$ be  a Shr\"odinger L-system of the form \eqref{149} with $h=i$. Then there exists a unique value of $\alpha\in(0,\pi]$ such that
\begin{equation}\label{e-69-V}
    V_{\Theta_{\mu, i}}(z)=-m_\alpha(z),
\end{equation}
where $m_\alpha(z)$ is defined in \eqref{e-59-LFT}.
\end{theorem}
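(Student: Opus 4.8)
The plan is to read off the parameter $\alpha$ directly from the given real parameter $\mu$ by inverting the relation $\mu=\tan\alpha$ established in Theorem~\ref{t-8}. First I would recall that by formula \eqref{1501} with $h=i$, the impedance function of $\Theta_{\mu,i}$ is
\begin{equation}\label{e-pf-1}
V_{\Theta_{\mu,i}}(z)=\frac{m_\infty(z)+\mu}{\mu\,m_\infty(z)-1},\quad z\in\dC_\pm,
\end{equation}
for any $\mu\in\dR\cup\{\infty\}$. On the other hand, by \eqref{e-61-Don} together with Theorem~\ref{t-8} (and its boundary cases $\alpha=\pi/2$, $\alpha=\pi$ recorded right after its proof), the function $-m_\alpha(z)$ equals $V_{\Theta_{\tan\alpha,i}}(z)$ whenever $\alpha\in(0,\pi]$, with the convention $\tan(\pi/2)=\infty$ and $\tan\pi=0$. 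Thus proving \eqref{e-69-V} amounts to showing that for every $\mu\in\dR\cup\{\infty\}$ there is a unique $\alpha\in(0,\pi]$ with $\tan\alpha=\mu$.

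The existence half is immediate: the map $\alpha\mapsto\tan\alpha$ from $(0,\pi/2)\cup(\pi/2,\pi)$ onto $\dR\setminus\{0\}$ is a bijection (continuous and strictly monotone on each of the two intervals, covering $(0,\infty)$ and $(-\infty,0)$ respectively), while $\alpha=\pi/2$ gives $\mu=\infty$ and $\alpha=\pi$ gives $\mu=0$. So for the given $\mu$ we pick this $\alpha$ and invoke \eqref{e-pf-1} and \eqref{e-64-need} (which already contain the identity $\tfrac{m_\infty(z)+\tan\alpha}{(\tan\alpha)m_\infty(z)-1}=-m_\alpha(z)$) to conclude $V_{\Theta_{\mu,i}}(z)=-m_\alpha(z)$.

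For uniqueness, suppose $-m_{\alpha_1}(z)=V_{\Theta_{\mu,i}}(z)=-m_{\alpha_2}(z)$ for two values $\alpha_1,\alpha_2\in(0,\pi]$. Using the linear-fractional formula \eqref{e-59-LFT} expressing $m_{\alpha_j}$ in terms of $m_\infty$, the equality $m_{\alpha_1}(z)=m_{\alpha_2}(z)$ for all $z\in\dC_\pm$ forces the two M\"obius transformations $w\mapsto\frac{\sin\alpha_j+w\cos\alpha_j}{\cos\alpha_j-w\sin\alpha_j}$ to agree on the range of $m_\infty$, which is an infinite subset of $\dC$ (since $m_\infty$ is non-constant, as used repeatedly in the excerpt via \cite{DanLev90}, \cite{Levitan}); hence the two M\"obius maps coincide, so the rotation matrices $\begin{pmatrix}\cos\alpha_j&\sin\alpha_j\\-\sin\alpha_j&\cos\alpha_j\end{pmatrix}$ are equal up to scalar, giving $\alpha_1\equiv\alpha_2\pmod\pi$ and therefore $\alpha_1=\alpha_2$ in $(0,\pi]$. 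Alternatively, and more in the spirit of the preceding proofs, one notes that by Theorem~\ref{t-8} the equality $V_{\Theta_{\mu,i}}(z)=-m_{\alpha_j}(z)$ already pins down $\mu=\tan\alpha_j$, so $\tan\alpha_1=\tan\alpha_2$, and injectivity of $\tan$ on $(0,\pi]$ finishes it. I do not anticipate a genuine obstacle here; the only point requiring care is the bookkeeping of the endpoint values $\alpha=\pi/2$ and $\alpha=\pi$ (where $\mu=\infty$ and $\mu=0$), which must be folded into the monotonicity argument so that the whole extended line $\dR\cup\{\infty\}$ is covered exactly once by $\alpha\in(0,\pi]$.
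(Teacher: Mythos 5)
Your proposal is correct and follows essentially the same route as the paper: equate $V_{\Theta_{\mu,i}}(z)=\frac{m_\infty(z)+\mu}{\mu m_\infty(z)-1}$ with $-m_\alpha(z)$ to force $\mu=\tan\alpha$, then use that $\alpha\mapsto\tan\alpha$ is a bijection of $(0,\pi]$ onto $\dR\cup\{\infty\}$. You merely spell out the existence/uniqueness bookkeeping (the endpoint cases $\alpha=\pi/2,\pi$ and the M\"obius-map uniqueness) that the paper's proof leaves implicit.
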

\begin{proof}
As we did in the proof of Theorem \ref{t-8} (see \eqref{e-64-need}) we set
$$
V_{\Theta_{\mu, i}}(z)=\frac{m_\infty(z)+\mu}{\mu\,m_\infty(z)-1}=-m_\alpha(z)=\frac{\sin\alpha+m_\infty(z)\cos\alpha}{-\cos\alpha+m_\infty(z)\sin\alpha},\quad z\in\dC_\pm,
$$
which leads to  (see \eqref{e-63-mu-alpha}) $\mu=\tan\alpha$.

Therefore, any value of $\mu\in\mathbb R\cup\{\pm\infty\}$ produces a unique value of $\alpha\in(0,\pi]$ and thus $m_\alpha(z)$ defined by \eqref{e-59-LFT}. Consequently, \eqref{e-69-V} is true.
\end{proof}
Clearly, trigonometry implies that a similar to Theorem \ref{t-7n} result takes place if one replaces \eqref{e-69-V} with $V_{\Theta_{\mu, i}}(z)=1/m_\alpha(z)$.

\section{Non-negative Schr\"odinger  operator case}\label{s7}
Now let us assume that  $\dA$ is a non-negative (i.e., $(\dA f,f) \geq 0$ for all $f \in \dom(\dA)$) symmetric operator  of the form \eqref{128} with deficiency indices (1,1), generated by the differential operation $l(y)=-y^{\prime\prime}+q(x)y$. The following theorem takes place.
\begin{theorem}[\cite{AT2009}, \cite{Ts2}, \cite{T87}]\label{t-10}
Let $\dA$ be a nonnegative symmetric Schr\"odinger operator of the form \eqref{128} with deficiency indices $(1, 1)$ and locally summable potential in $\calH=L^2[\ell,\infty).$ Consider operator $T_h$ of the form \eqref{131}.  Then
 \begin{enumerate}
\item operator $\dA$ has more than one non-negative self-adjoint extension, i.e., the Friedrichs extension $A_F$ and the Kre\u{\i}n-von Neumann extension $A_K$ do not coincide, if and only if $m_{\infty}(-0)<\infty$;
 \item operator $T_h$, ($h=\bar h$) coincides with the Kre\u{\i}n-von Neumann extension $A_K$ if and  only if $h=-m_{\infty}(-0)$;
\item operator $T_h$ is accretive if and only if
\begin{equation}\label{138}
\RE h\geq-m_\infty(-0);
\end{equation}
\item operator $T_h$, ($h\ne\bar h$) is $\beta$-sectorial if and only if  $\RE h >-m_{\infty}(-0)$ holds;
\item operator $T_h$, ($h\ne\bar h$) is accretive but not $\beta$-sectorial for any $\beta\in (0, \frac{\pi}{2})$ if and only if $\RE h=-m_{\infty}(-0)$ \item If $T_h, (\IM h>0)$ is $\beta$-sectorial,
then the exact angle $\beta$ can be calculated via
\begin{equation}\label{e10-45}
\tan\beta=\frac{\IM h}{\RE h+m_{\infty}(-0)}.
\end{equation}
\end{enumerate}
\end{theorem}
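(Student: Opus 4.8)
The statement assembles results of \cite{AT2009},\cite{Ts2},\cite{T87}; here is the line of argument I would follow. Everything rests on one computation: for $y\in\dom(T_h)$, integrating by parts over $[\ell,+\infty)$ — the boundary term at $+\infty$ dropping out since that endpoint is limit point, the deficiency indices of $\dA$ being $(1,1)$ — and using $l(y)\in L^2$ together with the boundary condition $y'(\ell)=h\,y(\ell)$, one obtains
\begin{equation}\label{pf-form}
(T_hy,y)=h\,|y(\ell)|^2+\mathfrak t[y],\qquad \mathfrak t[y]:=\int_\ell^{+\infty}\bigl(|y'(x)|^2+q(x)|y(x)|^2\bigr)\,dx\in\dR .
\end{equation}
Hence $\IM(T_hy,y)=(\IM h)\,|y(\ell)|^2$ (which re-proves dissipativity) and $\RE(T_hy,y)=(\RE h)\,|y(\ell)|^2+\mathfrak t[y]$. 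Moreover \eqref{pf-form} persists for every $y$ in the form domain $\calD$ of $\mathfrak t$ satisfying the boundary condition, and $\mathfrak t[v]\ge0$ whenever $v\in\calD$ with $v(\ell)=0$, because such $v$ lie in the form of the Friedrichs extension $A_F\ge0$.

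The heart of the matter is the number
\[
M:=\inf\bigl\{\,\mathfrak t[y]/|y(\ell)|^2 : y\in\calD,\ y(\ell)\neq0\,\bigr\},
\]
and I would prove $M=m_\infty(-0)$, where $m_\infty(-0)$ is the limit of $m_\infty$ at the bottom of the spectrum; this limit exists because $-m_\infty$ is Herglotz, so $m_\infty$ is monotone on $(-\infty,0)$ — and has no poles there, the real poles of $m_\infty$ being the eigenvalues of $A_F\ge0$. For the bound $M\le m_\infty(-0)$, evaluate the left side of \eqref{pf-form} along the Weyl $L^2$-solution $\varphi(\cdot,z)=\varphi_2+m_\infty(z)\varphi_1$ with $z<0$, which satisfies $\varphi(\ell,z)=-1$, $\varphi'(\ell,z)=m_\infty(z)$; integration by parts gives $\mathfrak t[\varphi(\cdot,z)]=m_\infty(z)+z\,\|\varphi(\cdot,z)\|^2<m_\infty(z)$, and letting $z\uparrow0$ yields $M\le m_\infty(-0)$. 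For the reverse bound, given $y\in\calD$ with $y(\ell)=c\neq0$, write $y=-c\,\varphi(\cdot,z)+v$ with $v(\ell)=0$; using $l(\varphi)=z\varphi$ and $v(\ell)=0$ to compute the cross term $\mathfrak t(-c\varphi,v)=z\,(-c\varphi,v)$, one obtains
\[
\mathfrak t[y]=|c|^2m_\infty(z)+z\,\|y\|^2+\bigl(\mathfrak t[v]-z\,\|v\|^2\bigr)\ \ge\ |c|^2m_\infty(z)+z\,\|y\|^2 ,
\]
since $z<0$ and $\mathfrak t[v]\ge0$; letting $z\uparrow0$, so that $z\,\|y\|^2\to0$, gives $\mathfrak t[y]\ge m_\infty(-0)\,|y(\ell)|^2$. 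This identification is the step I expect to be the main obstacle — it is exactly the bridge between the form of the minimal operator and the edge value $m_\infty(-0)$ — and along with it one must check that the infima and suprema occurring below may be computed over $\dom(T_h)$ rather than over $\calD$, which is done by approximation (the solutions $\varphi(\cdot,z)$, adjusted by a rank-one term supported near $\ell$, can be placed inside $\dom(T_h)$).

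Granting $M=m_\infty(-0)$, the six items fall out of \eqref{pf-form}. Items (3)--(5): since $\RE(T_hy,y)=(\RE h)|y(\ell)|^2+\mathfrak t[y]$, the operator $T_h$ is accretive iff $\RE h+M\ge0$, i.e. $\RE h\ge-m_\infty(-0)$; it is $\beta$-sectorial for some $\beta\in(0,\pi/2)$ iff for some $c>0$ one has $(\RE h-c\,\IM h)|y(\ell)|^2+\mathfrak t[y]\ge0$ for all admissible $y$, i.e. $\RE h-c\,\IM h\ge-m_\infty(-0)$ for some $c>0$, which is possible exactly when $\RE h>-m_\infty(-0)$; and the intermediate case $\RE h=-m_\infty(-0)$ is then precisely ``accretive but not $\beta$-sectorial for any $\beta\in(0,\pi/2)$''. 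Item (6): when $T_h$ is $\beta$-sectorial the exact angle obeys
\[
\tan\beta=\sup_{y}\frac{|\IM(T_hy,y)|}{\RE(T_hy,y)}=\sup_{y}\frac{(\IM h)\,|y(\ell)|^2}{(\RE h)\,|y(\ell)|^2+\mathfrak t[y]}=\frac{\IM h}{\RE h+M}=\frac{\IM h}{\RE h+m_\infty(-0)},
\]
the supremum being approached as $\mathfrak t[y]/|y(\ell)|^2\downarrow M$. Items (1)--(2): for real $h$, \eqref{pf-form} shows $T_h\ge0$ iff $h\ge-m_\infty(-0)$, while the Dirichlet (Friedrichs) extension $A_F=T_\infty$ is always non-negative; hence, when $m_\infty(-0)<\infty$, the smallest non-negative self-adjoint extension is $A_K=T_{-m_\infty(-0)}$, distinct from $A_F$, and $T_h=A_K$ exactly when $h=-m_\infty(-0)$, whereas if no finite $h$ is admissible the unique non-negative self-adjoint extension is $A_F$, so $A_K=A_F$.
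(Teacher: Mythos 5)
The paper does not prove Theorem~\ref{t-10}: it is imported verbatim from \cite{AT2009}, \cite{Ts2}, \cite{T87}, so there is no in-paper argument to compare yours against. Your proposal is a correct, self-contained route, and it is in fact the same quadratic-form mechanism that underlies the cited sources: everything reduces to the identity $(T_hy,y)=h|y(\ell)|^2+\mathfrak t[y]$ and to the identification of $M=\inf\mathfrak t[y]/|y(\ell)|^2$ with $m_\infty(-0)$, your two-sided estimate for which (testing on the Weyl solution $\varphi(\cdot,z)$ for the upper bound, and the decomposition $y=-c\,\varphi(\cdot,z)+v$ with $v(\ell)=0$ plus $\mathfrak t[v]\ge z\|v\|^2$ for the lower bound) is exactly the classical bridge between the boundary form and the edge value of the Weyl function. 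Three points deserve explicit care rather than a passing mention: (i) the vanishing of the boundary term at $+\infty$ and the convergence of the Dirichlet integral $\mathfrak t[y]$ for $y\in\dom(\dA^*)$ is a theorem for semibounded limit-point operators, not a consequence of the limit-point property alone; (ii) the inequality $\mathfrak t[v]\ge 0$ for $v(\ell)=0$ is immediate when $v\in\dom(\dA^*)$ (then $v\in\dom(A_F)$ and $\mathfrak t[v]=(A_Fv,v)$), and since every $v$ arising in your decomposition with $y\in\dom(T_h)$ is of this type, you can run the entire argument on operator domains and avoid the form-domain closure issue except in the exactness claim of item (6), where your approximation remark is genuinely needed; (iii) with this paper's normalization $\varphi_2(\ell)=-1$ one has $\varphi(\ell,z)=-1$, $\varphi'(\ell,z)=m_\infty(z)$ and $m_\infty$ is \emph{decreasing} on $(-\infty,0)$, so $m_\infty(-0)=\inf_{z<0}m_\infty(z)$ always exists in $[-\infty,+\infty)$ and the condition ``$m_\infty(-0)<\infty$'' in item (1) must be read as ``the limit is finite,'' i.e.\ $m_\infty(-0)>-\infty$; your statement of the dichotomy is consistent with this but you should say it. None of these is a gap in the logic; they are the standard technical inputs the cited references supply.
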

For the remainder of this paper we assume that $m_{\infty}(-0)<\infty$. Then according to Theorem \ref{t-10} above (see also \cite{Ts81}, \cite{Ts80}) we have the existence of the operator $T_h$, ($\IM h>0$) that is accretive and/or sectorial.
It  was shown in \cite{ABT} that if $T_h \;(\IM h>0)$ is an accretive Schr\"odinger operator of the form \eqref{131}, then for all real $\mu$ satisfying the following inequality
\begin{equation}\label{151}
\mu \geq \frac {(\IM h)^2}{m_\infty(-0)+\RE h}+\RE h,
\end{equation}
formulas \eqref{137}
 define the set of all accretive $(*)$-extensions $\bA_{\mu,h}$ of the operator $T_h$. Moreover, an accretive $(*)$-extensions $\bA_{\mu,h}$ of a sectorial  operator $T_h$ with exact angle of sectoriality $\beta\in(0,\pi/2)$ also preserves the same exact angle of sectoriality if and only if $\mu=+\infty$ in \eqref{137} (see \cite[Theorem 3]{BT-15}). Also, $\bA_{\mu,h}$ is accretive but not  $\beta$-sectorial for any $\beta\in(0,\pi/2)$ ($*$)-extension of $T_h$
 if and only if in \eqref{137}
\begin{equation}\label{e10-134}
\mu=\frac{(\IM h)^2}{m_\infty(-0)+\RE h}+\RE h,
\end{equation}
 (see \cite[Theorem 4]{BT-15}).
An accretive operator $T_h$ has a unique accretive $(*)$-extension $\bA_{\infty,h}\, $ if and only if
$$\RE h=-m_\infty(-0).$$
In this case this unique $(*)$-extension has the form
\begin{equation}\label{153}
\begin{aligned}
&\bA_{\infty,h}\,  y=-y^{\prime\prime}+q(x)y+[hy(\ell)-y^{\prime}(\ell)]\,\delta(x-\ell), \\
&\bA^*_{\infty,h}\,  y=-y^{\prime\prime}+q(x)y+[\overline h
y(\ell)-y^{\prime}(\ell)]\,\delta(x-\ell).
\end{aligned}
\end{equation}
Now we will see how the additional requirement of non-negativity affects the realization of functions $-m_\infty(z)$ and $1/m_\infty(z)$.
\begin{theorem}\label{t-11}%
Let $\dA$ be a non-negative symmetric Schr\"odinger operator of the form \eqref{128} with deficiency indices $(1, 1)$ and locally summable potential in $\calH=L^2[\ell, \infty).$ If $m_\infty(z)$ is the  Weyl-Titchmarsh function of $\dA$, then the L-system $\Theta_{0, i}$ of the form \eqref{e-38-sys} realizing the  function $(-m_\infty(z))$ is never accretive. The L-system $\Theta_{\infty, i}$ of the form \eqref{e-49-sys} realizing the  function $1/m_\infty(z)$ is accretive if and only if $m_\infty(-0)\ge0$.
 \end{theorem}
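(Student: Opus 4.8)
The plan is to reduce the statement to the accretivity criteria for the Schr\"odinger $(*)$-extensions $\bA_{\mu,h}$ recorded after Theorem~\ref{t-10}. I would start from two elementary observations: an L-system of the form \eqref{149} is accretive exactly when its state-space operator $\bA_{\mu,h}$ is accretive, and accretivity of $\bA_{\mu,h}$ forces accretivity of $T_h$, since $\dom(T_h)\subset\dom(\dA^*)=\calH_+$ and $\bA_{\mu,h}\supset T_h$. For both L-systems in question the boundary parameter is $h=i$, so $\RE h=0$, and Theorem~\ref{t-10}(3) gives: $T_i$ is accretive if and only if $m_\infty(-0)\ge 0$. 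Assuming $m_\infty(-0)\ge 0$, the threshold in \eqref{151} specializes to
$$
\mu_0:=\frac{(\IM h)^2}{m_\infty(-0)+\RE h}+\RE h=\frac{1}{m_\infty(-0)},
$$
to be read as $\mu_0=+\infty$ when $m_\infty(-0)=0$; the cited results then say that the accretive $(*)$-extensions of $T_i$ are precisely the $\bA_{\mu,i}$ with $\mu_0\le\mu\le+\infty$, where the value $\mu=+\infty$ produces the operator \eqref{153}, which with $h=i$ is exactly $\bA_{\infty,i}$ of \eqref{e-50-star}.

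For the first assertion I would simply note that $\mu_0>0$ in every case: if $m_\infty(-0)>0$ then $\mu_0=1/m_\infty(-0)>0$, if $m_\infty(-0)=0$ then $\mu_0=+\infty$, and if $m_\infty(-0)<0$ then $T_i$ is not accretive at all. Since $\Theta_{0,i}$ of \eqref{e-38-sys} carries the value $\mu=0$, which lies strictly below $\mu_0$ (or below any accretive value, when $T_i$ is itself not accretive), the operator $\bA_{0,i}$ is never accretive, and hence $\Theta_{0,i}$ is never accretive.

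For the second assertion, $\Theta_{\infty,i}$ of \eqref{e-49-sys} carries $\mu=+\infty\ge\mu_0$. Hence, if $m_\infty(-0)\ge 0$, then $\bA_{\infty,i}$ is accretive: when $m_\infty(-0)>0$ it is in fact sectorial, and when $m_\infty(-0)=0$ one has $\RE h=-m_\infty(-0)$, so $\bA_{\infty,i}$ is the unique accretive $(*)$-extension of $T_i$ furnished by \eqref{153}. Therefore $\Theta_{\infty,i}$ is accretive. Conversely, if $\Theta_{\infty,i}$ is accretive, then $\bA_{\infty,i}$ is accretive, which forces $T_i$ accretive and hence $m_\infty(-0)\ge 0$. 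The main obstacle — and essentially the only place requiring care — is the degenerate value $m_\infty(-0)=0$, where $\RE h=-m_\infty(-0)$ and $\mu_0$ collapses to $+\infty$: there the inequality \eqref{151} is vacuous for finite $\mu$, so one must instead invoke the uniqueness statement for $\bA_{\infty,h}$ and verify that the $\mu\to\infty$ form \eqref{153} with $h=i$ coincides with the operator \eqref{e-50-star} defining $\Theta_{\infty,i}$. All remaining steps are direct substitutions of $h=i$ into Theorem~\ref{t-10} and the discussion following it.
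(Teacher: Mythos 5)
Your proposal is correct and follows essentially the same route as the paper: reduce accretivity of the L-system to accretivity of the state-space operator, use Theorem~\ref{t-10}(3) with $\RE h=0$ to get the necessary condition $m_\infty(-0)\ge 0$, and then compare $\mu=0$ (resp.\ $\mu=+\infty$) against the threshold $\frac{1}{m_\infty(-0)}$ from \eqref{151}. Your extra care at the degenerate value $m_\infty(-0)=0$, where you check that the unique accretive $(*)$-extension \eqref{153} with $h=i$ is precisely \eqref{e-50-star}, is a welcome refinement of a point the paper passes over by reading the inequality $+\infty\ge+\infty$ formally.
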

\begin{proof}
First, let us consider the L-system $\Theta_{0, i}$ of the form \eqref{e-38-sys} realizing the  function $-m_\infty(z)$. According to Theorem \ref{t-10} we have that the main operator $T_i$ of the form \eqref{e-56-T} is accretive if and only if \eqref{138} holds, that is $0=\RE h\geq-m_\infty(-0)$ or
\begin{equation}\label{e-77-m0}
    m_\infty(-0)\ge0.
\end{equation}
Assume that \eqref{e-77-m0} holds (if it does not, the L-system $\Theta_{0, i}$ can not be accretive \textit{a priori}). Then, applying \eqref{151} we conclude that a $(*)$-extensions $\bA_{0, i}$ of the operator $T_i$ is accretive if
$$
0=\mu \geq \frac {(\IM h)^2}{m_\infty(-0)+\RE h}+\RE h=\frac{1}{m_\infty(-0)},
$$
that contradicts our assumption \eqref{e-77-m0} since $m_{\infty}(-0)<\infty$. Thus, $\bA_{0,i}$ of the form \eqref{e-39-star} is not accretive and hence the L-system $\Theta_{0, i}$ of the form \eqref{e-38-sys} realizing the  function $-m_\infty(z)$ can not be accretive.

Now let us consider the L-system $\Theta_{\infty, i}$ of the form \eqref{e-49-sys} realizing the  function $1/m_\infty(z)$. As we have shown above the main operator $T_i$ of the form \eqref{e-56-T} is accretive if and only if \eqref{e-77-m0} holds. Then, applying \eqref{151} we conclude that a $(*)$-extensions $\bA_{\infty, i}$ of the operator $T_i$ is accretive if and only if
$$
+\infty=\mu \geq \frac {(\IM h)^2}{m_\infty(-0)+\RE h}+\RE h=\frac{1}{m_\infty(-0)},
$$
that is always true due to \eqref{e-77-m0} and the assumption of our theorem. Consequently, the L-system $\Theta_{\infty, i}$ of the form \eqref{e-49-sys} realizing the  function $1/m_\infty(z)$ is accretive.
\end{proof}

Now we are going to turn to functions $m_\alpha(z)$ described by \eqref{e-62-psi}-\eqref{e-59-LFT} and associated with the non-negative operator $\dA$ above. We will see how the parameter $\alpha$ in the definition of $m_\alpha(z)$ affects the  L-system realizing $(-m_\alpha(z))$. The following theorem answers that question.
\begin{theorem}\label{t-12}%
Let $\dA$ be a non-negative symmetric Schr\"odinger operator of the form \eqref{128} with deficiency indices $(1, 1)$ and locally summable potential in $\calH=L^2[\ell, \infty)$ and such that $m_{\infty}(-0)\ge0$. If $m_\alpha(z)$ is  described by \eqref{e-62-psi}-\eqref{e-59-LFT}, then the L-system $\Theta_{\tan\alpha, i}$ of the form \eqref{e-64-sys} realizing the  function $(-m_\alpha(z))$ is accretive if and only if
\begin{equation}\label{e-78-angles}
\tan\alpha\ge \frac{1}{m_{\infty}(-0)}.
\end{equation}
 \end{theorem}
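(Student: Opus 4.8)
The plan is to reduce Theorem \ref{t-12} to the already-established accretivity criteria for $(*)$-extensions of $T_i$, exactly as was done for $\Theta_{0,i}$ and $\Theta_{\infty,i}$ in the proof of Theorem \ref{t-11}. First I would recall that, by Theorem \ref{t-8}, the L-system $\Theta_{\tan\alpha,i}$ of the form \eqref{e-64-sys} has $h=i$ and $\mu=\tan\alpha$, and its main operator is $T_i$ of the form \eqref{e-56-T}. Since we are assuming $m_{\infty}(-0)\ge 0$, the inequality \eqref{138} holds with $\RE h=0$, so $T_i$ is accretive; hence the only thing that can obstruct accretivity of the whole L-system is the accretivity of the state-space operator $\bA_{\tan\alpha,i}$.

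Next I would invoke the criterion \eqref{151}: for an accretive $T_h$, the $(*)$-extension $\bA_{\mu,h}$ is accretive if and only if
$$
\mu\ge\frac{(\IM h)^2}{m_\infty(-0)+\RE h}+\RE h.
$$
Substituting $h=i$ (so $\IM h=1$, $\RE h=0$) and $\mu=\tan\alpha$, the right-hand side collapses to $1/m_\infty(-0)$, and the condition becomes precisely
$$
\tan\alpha\ge\frac{1}{m_\infty(-0)},
$$
which is \eqref{e-78-angles}. Since $\Theta_{\tan\alpha,i}$ is accretive exactly when $\bA_{\tan\alpha,i}$ is accretive (and $T_i$ already is), this establishes both directions of the equivalence. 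I would also note that the case $\alpha=\pi/2$, where $\tan\alpha=+\infty$ and $-m_\alpha(z)=1/m_\infty(z)$, is consistent with Theorem \ref{t-11}: the inequality \eqref{e-78-angles} is then automatically satisfied, matching the fact that $\Theta_{\infty,i}$ is accretive under $m_\infty(-0)\ge0$.

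The main subtlety — not so much an obstacle as a point requiring care — is the range of $\alpha$ and the sign/finiteness bookkeeping. One must be careful that $\tan\alpha$ is interpreted in $\mathbb R\cup\{\pm\infty\}$ as in formula \eqref{137}, that \eqref{151} is being applied with the correct orientation of the inequality, and that the hypothesis $m_{\infty}(-0)\ge 0$ together with the standing assumption $m_{\infty}(-0)<\infty$ (so $1/m_\infty(-0)$ is well-defined and positive when $m_\infty(-0)>0$, and the right-hand side is $+\infty$ when $m_\infty(-0)=0$) is handled correctly. Modulo this bookkeeping, the proof is a direct specialization of \eqref{151} with $h=i$ and $\mu=\tan\alpha$, entirely parallel to the argument already given for Theorem \ref{t-11}.
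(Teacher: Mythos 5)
Your proof is correct and follows essentially the same route as the paper: reduce accretivity of the L-system to accretivity of the state-space operator $\bA_{\tan\alpha,i}$, use condition (3) of Theorem \ref{t-10} to confirm $T_i$ is accretive under $m_\infty(-0)\ge 0$, and then specialize the criterion \eqref{151} with $h=i$, $\mu=\tan\alpha$ to obtain \eqref{e-78-angles}. Your extra remarks on the $\tan\alpha=+\infty$ and $m_\infty(-0)=0$ bookkeeping are consistent with (and slightly more explicit than) the paper's argument.
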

\begin{proof}
Let $\Theta_{\tan\alpha, i}$ be the Schr\"odinger L-system  of the form \eqref{e-64-sys} realizing the  function $(-m_\alpha(z))$ and $T_i$ of the form \eqref{e-56-T} be its main operator. Clearly, if $\Theta_{\tan\alpha, i}$ is accretive, then both main operator $T_i$ and state-space operator $\bA_{\tan\alpha, i}$ of the form \eqref{e-65-star} must be accretive.
Then the condition (3) in the statement of Theorem \ref{t-10} applied to the operator $T_i$ says that it is accretive if and only if
\begin{equation}\label{e-81-acr}
   0=\RE h\geq-m_\infty(-0)\quad\textrm{ or }\quad m_\infty(-0)\ge0.
\end{equation}
Hence, the necessary condition for $\Theta_{\tan\alpha, i}$ to be accretive is $m_\infty(-0)\ge0$. Applying \eqref{151} we conclude that a $\bA_{\tan\alpha, i}$  is accretive if and only if
\begin{equation}\label{e-81-sec}
\tan\alpha \geq \frac {(\IM h)^2}{m_\infty(-0)+\RE h}+\RE h=\frac{1}{m_\infty(-0)}.
\end{equation}
\end{proof}
Note that if $m_\infty(-0)=0$ in \eqref{e-78-angles}, then $\alpha=\pi/2$ and $-m_{\frac{\pi}{2}}(z)={1}/{m_\infty(z)}$. From Theorem \ref{t-11} we know that  if $m_\infty(-0)\ge0$, then ${1}/{m_\infty(z)}$ is realized by an accretive system $\Theta_{\infty, i}$ of the form \eqref{e-49-sys}. 

Now once we established a criteria for an L-system realizing $(-m_\alpha(z))$ to be accretive, we can look into more of its properties. There are two choices for an accretive L-system $\Theta_{\tan\alpha, i}$: it is either (1) accretive sectorial or (2) accretive extremal. In the case (1) we have that $\bA_{\tan\alpha, i}$ of the form \eqref{e-65-star} is $\beta_1$-sectorial with some angle of sectoriality $\beta_1$ that can only exceed the exact angle of sectoriality $\beta$ of $T_i$. In the case (2) the state-space operator $\bA_{\tan\alpha, i}$ is extremal (not sectorial for any $\beta\in(0,\pi/2)$) and is a $(*)$-extension of $T_i$ that itself can be either $\beta$-sectorial or extremal. The following theorem describes all these possibilities.

\begin{theorem}\label{t-13}
Let $\Theta_{\tan\alpha, i}$ be the accretive L-system  realizing the  function \break $(-m_\alpha(z))$  described in Theorem \ref{t-12}. The following is true:
\begin{enumerate}
  \item if $m_{\infty}(-0)=0$, then there is only one accretive L-system $\Theta_{\infty,i}$ realizing $(-m_\alpha(z))$. This L-system is extremal and its main operator $T_i$ is extremal as well.
  \item if $m_\infty(-0)>0$, then $T_i$ is $\beta$-sectorial for $\beta\in(0,\pi/2)$ and
    \begin{enumerate}
      \item if $\tan\alpha={1}/{m_\infty(-0)}$, then $\Theta_{\tan\alpha, i}$ is extremal;
      \item if $\frac{1}{m_\infty(-0)}<\tan\alpha<+\infty$, then $\Theta_{\tan\alpha, i}$ is $\beta_1$-sectorial with $\beta_1>\beta$;
      \item if $\tan\alpha=+\infty$, then $\Theta_{\infty, i}$ is $\beta$-sectorial.
    \end{enumerate}
  \end{enumerate}
 \end{theorem}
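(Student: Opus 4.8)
The plan is to obtain everything directly from \thmref{t-10}, from the description \eqref{151}--\eqref{153} of the accretive $(*)$-extensions of $T_i$, and from \thmref{t-12}, exploiting that here $h=i$, so $\RE h=0$ and $\IM h=1$. With these values the critical quantity appearing in \eqref{151} and \eqref{e10-134} becomes $\frac{(\IM h)^2}{m_\infty(-0)+\RE h}+\RE h=\frac{1}{m_\infty(-0)}$, and \eqref{e10-45} shows that, whenever $T_i$ is $\beta$-sectorial, its exact angle obeys $\tan\beta=\frac{1}{m_\infty(-0)}$. I will also use \thmref{t-8} to recall that $(-m_\alpha(z))$ is realized by $\Theta_{\tan\alpha,i}$, whose main operator is $T_i$ of the form \eqref{e-56-T} and whose state-space operator is $\bA_{\tan\alpha,i}$ of the form \eqref{e-65-star}, corresponding to the parameter value $\mu=\tan\alpha$.

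For part (1), where $m_\infty(-0)=0$: first I would note that $\RE h=0=-m_\infty(-0)$, so item (5) of \thmref{t-10} tells us $T_i$ is accretive but not $\beta$-sectorial for any $\beta\in(0,\pi/2)$, i.e. $T_i$ is extremal; and, since $\RE h=-m_\infty(-0)$, the uniqueness criterion recalled just before \thmref{t-11} gives that $T_i$ has a single accretive $(*)$-extension, namely $\bA_{\infty,i}$ of the form \eqref{153}. As this $(*)$-extension corresponds to $\mu=+\infty$, which in this case is the critical value $\frac{1}{m_\infty(-0)}$, \cite[Theorem 4]{BT-15} shows it is itself accretive but not $\beta$-sectorial, so $\Theta_{\infty,i}$ is an extremal L-system. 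Finally I would invoke \thmref{t-12}: accretivity of $\Theta_{\tan\alpha,i}$ forces $\tan\alpha\ge\frac{1}{m_\infty(-0)}=+\infty$, hence $\tan\alpha=+\infty$, i.e. $\alpha=\pi/2$ and $\Theta_{\tan\alpha,i}=\Theta_{\infty,i}$, so $\Theta_{\infty,i}$ is indeed the only accretive realizing L-system.

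For part (2), where $m_\infty(-0)>0$: here $\RE h=0>-m_\infty(-0)$, so item (4) of \thmref{t-10} gives that $T_i$ is $\beta$-sectorial with exact angle $\tan\beta=\frac{1}{m_\infty(-0)}$, and I would then split according to the position of $\mu=\tan\alpha$ relative to the critical value $\frac{1}{m_\infty(-0)}$ and $+\infty$. If $\tan\alpha=\frac{1}{m_\infty(-0)}$, then $\mu$ is exactly the value \eqref{e10-134}, so \cite[Theorem 4]{BT-15} gives that $\bA_{\tan\alpha,i}$ is accretive but not $\beta$-sectorial, proving (2a). If $\frac{1}{m_\infty(-0)}<\tan\alpha<+\infty$, then $\mu$ is finite and strictly above the critical value, so \eqref{151} gives accretivity, \cite[Theorem 4]{BT-15} excludes extremality, and \cite[Theorem 3]{BT-15} shows the exact angle is not preserved (it is preserved only when $\mu=+\infty$); combining this with the known monotonicity $\beta_1\ge\beta$ yields $\beta_1>\beta$, proving (2b). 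If $\tan\alpha=+\infty$, i.e. $\mu=\infty$, then \cite[Theorem 3]{BT-15} gives that $\bA_{\infty,i}$ preserves the exact angle $\beta$ of $T_i$, so $\Theta_{\infty,i}$ is $\beta$-sectorial, proving (2c).

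The step that I expect to require the most care is the bookkeeping at the borderline parameter values: reading $\mu=+\infty$ as the critical value $\frac{1}{m_\infty(-0)}$ when $m_\infty(-0)=0$, keeping straight which of \cite[Theorem 3]{BT-15} and \cite[Theorem 4]{BT-15} governs $\mu=+\infty$ versus the finite critical value, and, in part (1), the uniqueness assertion, which rests on combining \thmref{t-12} with the uniqueness of the accretive $(*)$-extension in the case $\RE h=-m_\infty(-0)$. The sectoriality monotonicity itself --- that finite $\mu$ above the critical value can only strictly enlarge the angle --- needs no computation and follows from \cite[Theorem 3]{BT-15} together with the inclusion $T_i\subset\bA_{\tan\alpha,i}$.
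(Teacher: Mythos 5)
Your proposal is correct and takes essentially the same approach as the paper: part (2) is argued identically via Theorem~\ref{t-10}, \eqref{151}, \eqref{e10-134}, \eqref{e10-45} and \cite[Theorems 3 and 4]{BT-15}. The only (harmless) variation is in part (1), where the paper obtains extremality of the L-system directly from extremality of $T_i$ (a $(*)$-extension of an extremal operator cannot be sectorial) and uniqueness from \eqref{e-78-angles} forcing $\tan\alpha=+\infty$, whereas you additionally route through the uniqueness of the accretive $(*)$-extension when $\RE h=-m_\infty(-0)$ and read \cite[Theorem 4]{BT-15} at the degenerate critical value $1/m_\infty(-0)=+\infty$.
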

\begin{proof}
(1) As we already noted above if $m_\infty(-0)=0$, then $\alpha=\pi/2$ and $-m_{\frac{\pi}{2}}(z)={1}/{m_\infty(z)}$. Also, the condition (5) in the statement of Theorem \ref{t-10} implies that $T_i$ is extremal since $\RE h=-m_{\infty}(-0)=0$. Thus, $\Theta_{\tan\alpha, i}$ is extremal as well and
\eqref{e-81-sec} yields that $\tan\alpha=+\infty$. Therefore, $\Theta_{\infty,i}$ is the only accretive L-system  realizing $(-m_\alpha(z))$ in this case.

\begin{figure}
  \begin{center}
  \includegraphics[width=110mm]{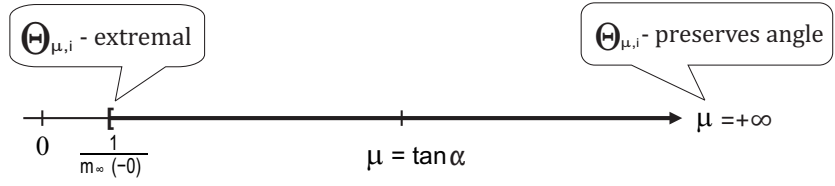}
  \caption{Accretive L-systems $\Theta_{\mu, i}$}\label{fig-1}
  \end{center}
\end{figure}

(2) If $m_\infty(-0)>0$, then the condition (4) in the statement of Theorem \ref{t-10} implies that $T_i$ is $\beta$-sectorial with $\beta\in(0,\pi/2)$ and the exact angle of sectoriality $\beta$ is given by \eqref{e10-45} as
\begin{equation}\label{e-82-sec}
\tan\beta=\frac{\IM h}{\RE h+m_{\infty}(-0)}=\frac{1}{m_\infty(-0)}\le\tan\alpha,
\end{equation}
where the last inequality is due to the fact that $\Theta_{\tan\alpha, i}$ is accretive and hence \eqref{e-81-sec} takes place. If we assume (2a), then for the L-system $\Theta_{\tan\alpha, i}$ we have
$$
\mu=\tan\alpha=\frac{1}{m_\infty(-0)}=\frac{(\IM h)^2}{m_\infty(-0)+\RE h}+\RE h,
$$
and hence according to \eqref{e10-134} we have that $\bA_{\tan\alpha, i}$ is  accretive but not  $\beta$-sectorial for any $\beta\in(0,\pi/2)$. Consequently,  $\Theta_{\tan\alpha, i}$ is extremal.

If we assume (2b), then $\mu=\tan\alpha$ is finite but strictly greater than $\tan\beta=\frac{1}{m_\infty(-0)}$ and hence \eqref{e10-134} is not true. Therefore, the accretive $\bA_{\tan\alpha, i}$ cannot be extremal and thus is $\beta_1$-sectorial for some $\beta_1\in(0,\pi/2)$. Since $\bA_{\tan\alpha, i}$ is a $(*)$-extension of the $\beta$-sectorial operator $T_i$, then $\beta_1>\beta$. Thus, $\Theta_{\tan\alpha, i}$ is $\beta_1$-sectorial with $\beta_1>\beta$.

Our last possible option is (2c) where $\mu=\tan\alpha=+\infty$. We know (see \cite[Theorem 3]{BT-15}) that in this case $\bA_{\tan\alpha, i}$ preserves the same exact angle of sectoriality as in $T_i$. As a result $\Theta_{\infty, i}$ is $\beta$-sectorial.

The proof is complete.
\end{proof}
Figure \ref{fig-1} above describes the dependence of the properties of realizing $(-m_\alpha(z))$ L-systems on the value of $\mu$ and hence $\alpha$. The bold part of the real line depicts values of $\mu=\tan\alpha$ that produce accretive L-systems $\Theta_{\mu, i}$.

The next theorem describes additional analytic properties of Herglotz-Nevanlinna functions $(-m_\infty(z))$, $1/m_\infty(z)$, and $(-m_\alpha(z))$.
\begin{theorem}\label{t-14}%
Let $\dA$ be a non-negative symmetric Schr\"odinger operator of the form \eqref{128} with deficiency indices $(1, 1)$ and locally summable potential in $\calH=L^2[\ell, \infty).$ Then:
 \begin{enumerate}
   \item the  function $1/m_\infty(z)$ is Stieltjes if and only if $m_\infty(-0)\ge0$;
   \item the  function $(-m_\infty(z))$ is never Stieltjes;\footnote{It will be shown in an upcoming paper that if $m_\infty(-0)\ge0$, then the  function $(-m_\infty(z))$ is actually inverse Stieltjes.}
   \item the  function $(-m_\alpha(z))$ given by \eqref{e-59-LFT} is Stieltjes if and only if $$0<\frac{1}{m_\infty(-0)}\le\tan\alpha.$$
 \end{enumerate}
  \end{theorem}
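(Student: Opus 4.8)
The plan is to characterize the Stieltjes property by combining the integral representation \eqref{e8-94} with the realization results already established, rather than working directly with the Weyl-Titchmarsh asymptotics. Recall from \secref{s2} that a Herglotz-Nevanlinna function $V(z)$ realized as the impedance function of an L-system belongs to the Stieltjes class exactly when the realizing L-system can be chosen accretive (this is the $S_0(R)$ criterion quoted after \eqref{e8-94}); more precisely, $V(z)$ being Stieltjes is equivalent to the positivity condition \eqref{e4-0}, which in turn corresponds to accretivity of the state-space operator $\bA$ in the realizing L-system. So the strategy for each of the three items is: identify the realizing Schr\"odinger L-system from \secref{s5}--\secref{s6}, and then invoke \thmref{t-11} (for items (1) and (2)) or \thmref{t-12} (for item (3)) to read off precisely when that L-system is accretive.

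For item (1): the function $1/m_\infty(z)$ is realized by $\Theta_{\infty,i}$ of the form \eqref{e-49-sys} (\thmref{t-7}), and by \thmref{t-11} this L-system is accretive if and only if $m_\infty(-0)\ge0$. Thus I would argue that $m_\infty(-0)\ge0$ forces $1/m_\infty(z)$ to satisfy \eqref{e4-0} and hence to be Stieltjes; conversely, if $1/m_\infty(z)$ were Stieltjes while $m_\infty(-0)<0$, the accretivity criterion of \thmref{t-10}(3) for $T_i$ (namely $0=\RE i\ge -m_\infty(-0)$) would fail, contradicting the fact that a Stieltjes function's realizing L-system has accretive main operator. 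One must also check holomorphy of $1/m_\infty(z)$ on $\Ext[0,+\infty)$, which follows from the non-negativity of $\dA$ (its self-adjoint extensions are semibounded below by $0$, so $m_\infty$ and hence $1/m_\infty$ extend analytically across the negative axis and $m_\infty(-0)$ exists as a limit). For item (2): $(-m_\infty(z))$ is realized by $\Theta_{0,i}$ of the form \eqref{e-38-sys}, and \thmref{t-11} states this L-system is \emph{never} accretive; since a Stieltjes function must be realizable by an accretive L-system (and, by the uniqueness part of \thmref{t-6}, $\Theta_{0,i}$ is the \emph{only} Schr\"odinger L-system realizing $(-m_\infty(z))$), the function $(-m_\infty(z))$ cannot be Stieltjes. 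For item (3): $(-m_\alpha(z))$ is realized by $\Theta_{\tan\alpha,i}$ of the form \eqref{e-64-sys} (\thmref{t-8}), and \thmref{t-12} says this L-system is accretive if and only if $\tan\alpha\ge 1/m_\infty(-0)$ (with $m_\infty(-0)>0$ implicit, since $m_\infty(-0)=0$ would force $\tan\alpha=+\infty$, the degenerate case $-m_{\pi/2}(z)=1/m_\infty(z)$ already covered by item (1)); hence $(-m_\alpha(z))$ is Stieltjes precisely under the condition $0<1/m_\infty(-0)\le\tan\alpha$.

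The delicate point is establishing the precise equivalence ``$V$ Stieltjes $\iff$ realizing L-system accretive'' in the \emph{converse} direction, i.e.\ that a Stieltjes impedance function \emph{forces} the realizing L-system to be accretive, and not merely that accretivity is sufficient. This is where I would lean on the $S_0(R)$ theory: a scalar Stieltjes function whose measure has unbounded variation is realized by an accretive densely defined L-system, and the uniqueness theorems \thmref{t-6}, \thmref{t-7}, \thmref{t-8} (combined with \thmref{t-17}) pin down the realizing Schr\"odinger L-system uniquely, so ``the'' realizing L-system must itself be accretive. A subtlety to watch is whether the relevant measures actually have unbounded variation in our Schr\"odinger setting (so that we are genuinely in class $S_0(R)$ rather than a bounded-variation Stieltjes function realized by a non-densely-defined model); this should follow from $\dA$ being densely defined with deficiency indices $(1,1)$, but it is the step I would write out most carefully. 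The remaining verifications — holomorphy off $[0,\infty)$, the sign of $\IM[zV(z)]/\IM z$, and the bookkeeping of the degenerate values $\alpha=\pi/2$ — are routine once the accretivity dictionary is in place.
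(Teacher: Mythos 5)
Your proposal is correct and follows essentially the same route as the paper: the paper's entire proof consists of citing the equivalence ``the impedance function of an L-system is Stieltjes if and only if that L-system is accretive'' from \cite[Section 9.8]{ABT} and then invoking Theorems \ref{t-11} and \ref{t-12}. The converse direction you flag as delicate needs no detour through the uniqueness theorems or the $S_0(R)$ machinery, since the cited result is already a biconditional for a \emph{fixed} L-system and its impedance function, so Stieltjes-ness of $V_{\Theta_{0,i}}$, $V_{\Theta_{\infty,i}}$, or $V_{\Theta_{\tan\alpha,i}}$ directly forces accretivity of that particular system.
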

  \begin{proof}
  It was shown in \cite[Section 9.8]{ABT} that the impedance function of an L-system is Stieltjes if and only if this L-system is accretive. The rest of the proof immediately follows from Theorems \ref{t-11} and \ref{t-12}.
  \end{proof}
We note that the Schr\"odinger L-systems  $\Theta_{0, i}$ of the form \eqref{e-38-sys}  and  $\Theta_{\infty, i}$ of the form \eqref{e-49-sys} that we described in Theorem \ref{t-11} in this section have special properties. It was shown in \cite{ABT} (see also \cite{BT-15}) that the quasi-kernel $\hat A_0$ of $\RE\bA_{0, i}$ of the form \eqref{e-31} corresponds to the Friedrich's extension  while the quasi-kernel $\hat A_\infty$ of $\RE\bA_{\infty, i}$ corresponds to the Krein-von Neumann extension of our symmetric operator $\dA$ only in the case when $m_\infty(-0)=0$.

\section{Uniqueness of Schr\"odinger L-systems}\label{s8}

We start this section with the definition of two equal L-systems of the form \eqref{e6-3-2}.
\begin{definition}\label{d-14}
Two L-systems
\begin{equation*}
\Theta_1= \begin{pmatrix} \bA_1&K_1&\ 1\cr \calH_{1+} \subset \calH \subset
\calH_{1-}& &\dC\cr \end{pmatrix}
\end{equation*}
and
\begin{equation*}
\Theta_2= \begin{pmatrix} \bA_2&K_2&\ 1\cr \calH_{2+} \subset \calH \subset
\calH_{2-}& &\dC\cr \end{pmatrix}
\end{equation*}
are \textbf{equal} if $\calH_{1+} =\calH_{2+}$, $\bA_1=\bA_2$, and $K_1=K_2$.
\end{definition}
In this section we are going to look into uniqueness issues as applied to Schr\"odin\-ger L-systems of the form \eqref{149}. The main question to consider is when two identical impedance functions guarantee two equal (in the sense of Definition \ref{d-14}) Schr\"odinger L-systems they represent.
Suppose
\begin{equation}\label{e-75-1}
\Theta_1= \Theta_{\mu_1,h_1}= \begin{pmatrix} \bA_1&K_1&\ 1\cr \calH_{1+} \subset L_2[\ell,+\infty) \subset
\calH_{1-}& &\dC\cr \end{pmatrix}
\end{equation}
and
\begin{equation}\label{e-76-2}
\Theta_2= \Theta_{\mu_2,h_2}= \begin{pmatrix} \bA_2&K_2&\ 1\cr \calH_{2+} \subset L_2[\ell,+\infty) \subset
\calH_{2-}& &\dC\cr \end{pmatrix}
\end{equation}
be two Schr\"odin\-ger L-systems of the form \eqref{149} corresponding to two generally speaking different symmetric in $L_2[\ell,+\infty)$ operators
\begin{equation}\label{e-77-1}
 \left\{ \begin{array}{l}
 \dA_1 y=-y^{\prime\prime}+q_1(x)y \\
 y(\ell)=y^{\prime}(\ell)=0 \\
 \end{array} \right.
\end{equation}
and
\begin{equation}\label{e-77-2}
 \left\{ \begin{array}{l}
 \dA_2 y=-y^{\prime\prime}+q_2(x)y \\
 y(\ell)=y^{\prime}(\ell)=0 \\
 \end{array} \right.
\end{equation}
of the form \eqref{128}. Let also $m_{\infty,1}(z)$ and $m_{\infty,2}(z)$ be the  Weyl-Titchmarsh functions of $\dA_1$ and $\dA_2$, respectively. We will see under what conditions $V_{\Theta_1}(z)=V_{\Theta_2}(z)$ would imply that $\Theta_1=\Theta_2$.  Our first result is related to functions $m_\alpha(z)$ of the form  \eqref{e-59-LFT}.
\begin{theorem}\label{t-15}%
Let $m_{\alpha_1}(z)$ and $m_{\alpha_2}(z)$ be the functions of the form \eqref{e-59-LFT} related to the operators $\dA_1$ and $\dA_2$ of the forms \eqref{e-77-1} and \eqref{e-77-2}. Let also $\Theta_1=\Theta_{\tan\alpha_1, i}$ and $\Theta_2=\Theta_{\tan\alpha_2, i}$ be Schr\"odin\-ger L-systems of the form \eqref{e-64-sys} that realize the functions $(-m_{\alpha_1}(z))$ and $(-m_{\alpha_2}(z))$, respectively. If $m_{\infty,1}(z)=m_{\infty,2}(z)$, ($z\in\dC_\pm$), then $$V_{\Theta_1}(z)=V_{\Theta_2}(z),\quad (z\in\dC_\pm)$$ implies $\Theta_1=\Theta_2$.
\end{theorem}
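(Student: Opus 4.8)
The plan is to show that the two hypotheses together force $\Theta_1$ and $\Theta_2$ to be assembled from literally the same ingredients — the same symmetric operator $\dA$ and the same boundary parameters $\mu$ and $h$ — after which $\Theta_1=\Theta_2$ is immediate from the explicit formulas \eqref{137}, \eqref{146}, \eqref{148}, \eqref{149}. By construction $\Theta_j=\Theta_{\tan\alpha_j,i}$, so $h_1=h_2=i$ and $\mu_j=\tan\alpha_j$, and specializing \eqref{1501} to $h=i$ gives
$$
V_{\Theta_j}(z)=\frac{m_{\infty,j}(z)+\mu_j}{\mu_j\,m_{\infty,j}(z)-1},\qquad z\in\dC_\pm .
$$
Writing $m_\infty(z):=m_{\infty,1}(z)=m_{\infty,2}(z)$ by the first hypothesis, the assumed equality $V_{\Theta_1}(z)=V_{\Theta_2}(z)$ becomes a rational identity in $m_\infty(z)$; clearing denominators, a short computation leaves $(\mu_2-\mu_1)\bigl(m_\infty(z)^2+1\bigr)=0$ for all $z\in\dC_\pm$. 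Since $m_\infty(z)$ is not identically $\pm i$ (as one sees from its asymptotic expansion, \cite{DanLev90}, \cite[Chapter~2]{Levitan}), this forces $\mu_1=\mu_2=:\mu$, i.e.\ $\tan\alpha_1=\tan\alpha_2$; if one of $\mu_1,\mu_2$ is infinite, the same manipulation applied to $1/m_\infty$ again gives $\mu_1=\mu_2$.

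Next I would use the first hypothesis once more, this time through the classical uniqueness theorem of inverse spectral theory on the half-line: equality of the Weyl--Titchmarsh functions $m_{\infty,1}=m_{\infty,2}$ forces $q_1=q_2$ almost everywhere on $[\ell,+\infty)$ (see \cite{Levitan}, \cite{Na68}, \cite{Ti62}). Hence $\dA_1=\dA_2$, so the rigged triplets agree, $\calH_{1+}=\dom(\dA_1^*)=\dom(\dA_2^*)=\calH_{2+}$; and since $\Theta_1,\Theta_2$ now carry the same $\dA$, the same $\mu$, and $h=i$, the formulas \eqref{137} and \eqref{146}, \eqref{148} give $\bA_1=\bA_2$ and $K_1=K_2$. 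By Definition~\ref{d-14} this is precisely $\Theta_1=\Theta_2$; for the excluded values $\alpha\in\{\pi/2,\pi\}$ the same conclusion is read off directly from the explicit systems \eqref{e-49-sys} and \eqref{e-38-sys}.

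The step I expect to be the genuine obstacle — or at any rate the only non-formal ingredient — is the appeal to inverse spectral theory yielding $q_1=q_2$: it is what makes the assumption on the Weyl--Titchmarsh functions do real work, since without it one cannot even place $\Theta_1$ and $\Theta_2$ in a common rigged Hilbert space, as required by Definition~\ref{d-14}. Everything else reduces to the short rational computation above together with bookkeeping of the formulas collected in Sections~\ref{s4} and~\ref{s6}.
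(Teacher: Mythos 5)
Your proof is correct and follows essentially the same route as the paper: first the impedance equality is reduced, via the explicit rational formula in $m_\infty(z)$, to a factorization of the form $(\text{const})\cdot\bigl(1+m_\infty^2(z)\bigr)=0$, which forces $\tan\alpha_1=\tan\alpha_2$ since $m_\infty\not\equiv -i$; then the Borg--Marchenko uniqueness theorem converts $m_{\infty,1}=m_{\infty,2}$ into $q_1=q_2$, so the two L-systems are built from identical data. The only cosmetic difference is that you clear denominators in \eqref{1501} with $h=i$ directly, whereas the paper routes the same computation through the identity $-m_{\alpha_j}=V_{\Theta_{\tan\alpha_j,i}}$ and \eqref{e-59-LFT}, arriving at $\sin(\alpha_1-\alpha_2)=0$.
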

  \begin{proof}
First we will show that the equality   $m_{\infty,1}(z)=m_{\infty,2}(z)$, ($z\in\dC_\pm$) in addition to
$$V_{\Theta_1}(z)=V_{\Theta_{\tan\alpha_1, i}}(z)=V_{\Theta_2}(z)=V_{\Theta_{\tan\alpha_2, i}}(z),\quad (z\in\dC_\pm)$$
yields that $\tan\alpha_1=\tan\alpha_2$. We know that according to Theorem \ref{t-8},\break $(-m_{\alpha_1}(z))=V_{\Theta_{\tan\alpha_1, i}}(z)$ and $(-m_{\alpha_2}(z))=V_{\Theta_{\tan\alpha_2, i}}(z)$ for all $z\in\dC_\pm$. Hence,
$$
 m_{\alpha_1}({z})=\frac{\sin\alpha_1+m_{\infty,1}(z)\cos\alpha_1}{\cos\alpha_1-m_{\infty,1}(z)\sin\alpha_1}=m_{\alpha_2}(z)=\frac{\sin\alpha_2+m_{\infty,2}(z)\cos\alpha_2}{\cos\alpha_2-m_{\infty,2}(z)\sin\alpha_2}.
$$
Since $m_{\infty,1}(z)=m_{\infty,2}(z)=m_\infty(z)$, then we have
$$
 \frac{\sin\alpha_1+m_{\infty}(z)\cos\alpha_1}{\cos\alpha_1-m_{\infty}(z)\sin\alpha_1}=\frac{\sin\alpha_2+m_{\infty}(z)\cos\alpha_2}{\cos\alpha_2-m_{\infty}(z)\sin\alpha_2},
$$
or, after simple calculations,
$$
\Big(1+m_{\infty}^2(z)\Big)\Big(\sin\alpha_1\cos\alpha_2-\cos\alpha_1\sin\alpha_2 \Big)=0,\quad (\forall z\in\dC_\pm).
$$
First set of parentheses can not be identical zero because it leads to $m_{\infty}(z)\equiv -i$, which is impossible as we explained earlier in the paper. Thus,
$$
\sin\alpha_1\cos\alpha_2-\cos\alpha_1\sin\alpha_2=\sin(\alpha_1-\alpha_2)=0,
$$
implying $\alpha_1=\alpha_2+\pi k$, $k\in\dZ$. Therefore, $\tan\alpha_1=\tan\alpha_2$.

Now, the fact that $m_{\infty,1}(z)=m_{\infty,2}(z)$ allows us to use the fundamental Borg-Marchenko uniqueness theorem \cite{Borg}, \cite{Mar} to conclude that the potentials $q_1(x)$ and $q_2(x)$ in \eqref{e-77-1} and \eqref{e-77-2} are the same. Taking into account that $\tan\alpha_1=\tan\alpha_2$ we have that $\Theta_1=\Theta_{\tan\alpha_1, i}$ and $\Theta_2=\Theta_{\tan\alpha_2, i}$ are equal by construction.
  \end{proof}

Now we are going to state and prove a bit more general result.
\begin{theorem}\label{t-16}%
 Let  $\Theta_1=\Theta_{\mu_1, h_1}$ and $\Theta_2=\Theta_{\mu_2, h_2}$ be Schr\"odin\-ger L-systems of the form \eqref{e-75-1} and \eqref{e-76-2} with  $\mu_1=\mu_2=\mu$ and $h_1=h_2=h$, respectively. If $V_{\Theta_1}(z)=V_{\Theta_2}(z)$, $(z\in\dC_\pm)$, then $m_{\infty,1}(z)=m_{\infty,2}(z)$, ($z\in\dC_\pm$) and $\Theta_1=\Theta_2$.
\end{theorem}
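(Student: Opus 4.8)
The plan is to combine the explicit impedance formula \eqref{1501} with the classical Borg--Marchenko uniqueness theorem, following the template of the proof of Theorem~\ref{t-15}.

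\emph{Step one: extract equality of the Weyl--Titchmarsh functions.} Abbreviate $m_j=m_{\infty,j}(z)$ for $j=1,2$. Since $\Theta_1=\Theta_{\mu,h}$ and $\Theta_2=\Theta_{\mu,h}$ share the parameters $\mu$ and $h$, formula \eqref{1501} gives
\[
V_{\Theta_j}(z)=\frac{(m_j+\mu)\IM h}{(\mu-\RE h)m_j+\mu\RE h-|h|^2},\qquad j=1,2.
\]
Imposing $V_{\Theta_1}(z)=V_{\Theta_2}(z)$, cancelling $\IM h\ne0$, and cross-multiplying, one sees that the bilinear term in $m_1m_2$ and the pure constant term cancel, leaving
\[
(m_1-m_2)\bigl(\mu\RE h-|h|^2-\mu(\mu-\RE h)\bigr)=0,\qquad z\in\dC_\pm.
\]
The coefficient equals $-(\mu-h)(\mu-\bar h)=-|\mu-h|^{2}$, which is strictly negative because $\IM h>0$ forces $\mu\ne h$; hence $m_1=m_2$, that is, $m_{\infty,1}(z)=m_{\infty,2}(z)$ on $\dC_\pm$. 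The endpoint $\mu=\infty$ has to be handled separately: there \eqref{1501} degenerates in the limit to $V_{\Theta_j}(z)=\IM h\,/\,(m_j+\RE h)$, and equality of these again gives $m_1=m_2$ at once.

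\emph{Step two: conclude that the L-systems coincide.} Once $m_{\infty,1}=m_{\infty,2}$ on $\dC_\pm$, the Borg--Marchenko uniqueness theorem \cite{Borg}, \cite{Mar} (applied exactly as in the proof of Theorem~\ref{t-15}) yields $q_1(x)=q_2(x)$, so $\dA_1=\dA_2$ and the two rigged triplets $\calH_{1+}\subset L_2[\ell,+\infty)\subset\calH_{1-}$ and $\calH_{2+}\subset L_2[\ell,+\infty)\subset\calH_{2-}$ coincide. Because in addition $\mu_1=\mu_2=\mu$ and $h_1=h_2=h$, the state-space operators $\bA_{\mu,h}$ of \eqref{137} and the channel operators $K_{\mu,h}$ of \eqref{148} are literally identical, so $\Theta_1=\Theta_2$ in the sense of Definition~\ref{d-14}.

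I do not anticipate a genuine obstacle. The only steps requiring a little care are the algebraic bookkeeping that isolates the factor $|\mu-h|^{2}$ — and the remark that it is nonzero precisely because $h$ is non-real, which is where the standing hypothesis $\IM h>0$ of a Schr\"odinger L-system enters — together with the separate treatment of the limiting value $\mu=\infty$.
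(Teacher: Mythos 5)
Your proof is correct and follows essentially the same route as the paper: cross-multiply the two instances of \eqref{1501}, observe that the difference reduces to $(m_{\infty,1}-m_{\infty,2})$ times the nonvanishing factor $|\mu-h|^2$ (the paper phrases this as the quadratic $\mu^2-2\mu\RE h+|h|^2=0$ having no real roots), and then invoke Borg--Marchenko. Your explicit treatment of the limiting case $\mu=\infty$ is a small point of extra care that the paper's proof omits.
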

  \begin{proof}
The equality of impedance functions $V_{\Theta_1}(z)=V_{\Theta_2}(z)$ and \eqref{1501} imply
\begin{equation}\label{e-80-eqv}
\frac{m_{\infty,1}(z)+\mu}{\left(\mu-\RE h\right)m_{\infty,1}(z)+\mu\RE h-|h|^2}=\frac{m_{\infty,2}(z)+\mu}{\left(\mu-\RE h\right)m_{\infty,2}(z)+\mu\RE h-|h|^2}.
\end{equation}
Then
$$
\begin{aligned}
(m_{\infty,1}(z)&+\mu)(\left(\mu-\RE h\right)m_{\infty,2}(z)+\mu\RE h-|h|^2)\\
&=(m_{\infty,2}(z)+\mu)(\left(\mu-\RE h\right)m_{\infty,1}(z)+\mu\RE h-|h|^2),
\end{aligned}
$$
which leads to
$$
\Big(m_{\infty,1}(z)-m_{\infty,2}(z)\Big)(\mu^2-2\mu\RE\mu+|h|^2)=0,\quad (\forall z\in\dC_\pm).
$$
Assuming that $m_{\infty,1}(z)\ne m_{\infty,2}(z)$ in $\dC_\pm$ brings us to the quadratic equation in $\mu$
$$
\mu^2-2\mu\RE\mu+|h|^2=0.
$$
Applying the quadratic formula gives us $\mu=\RE h\pm(\IM h)\,i$.  This contradicts the fact that $\mu$ must be real since $\IM h\ne0$. Therefore, we arrived at a contradiction and the only logical choice is
$m_{\infty,1}(z)=m_{\infty,2}(z)$, ($z\in\dC_\pm$). Now we apply the Borg-Marchenko uniqueness theorem \cite{Borg}, \cite{Mar} to conclude that the potentials $q_1(x)$ and $q_2(x)$ in \eqref{e-77-1} and \eqref{e-77-2} are the same. Taking into account that $\mu_1=\mu_2$ and $h_1=h_2$ we have that $\Theta_1=\Theta_{\mu_1, h_1}$ and $\Theta_2=\Theta_{\mu_2, h_2}$ are equal by construction.
 \end{proof}
Summarizing the above derivations we arrive at the following uniqueness criterion for Schr\"odin\-ger L-systems with equal boundary parameters $\mu$ and $h$.
\begin{theorem}\label{t-17}%
 Let  $\Theta_1=\Theta_{\mu_1, h_1}$ and $\Theta_2=\Theta_{\mu_2, h_2}$ be Schr\"odin\-ger L-systems of the form \eqref{e-75-1} and \eqref{e-76-2} with  $\mu_1=\mu_2=\mu$ and $h_1=h_2=h$, respectively. Then  $\Theta_1=\Theta_2$ if and only if  $V_{\Theta_1}(z)=V_{\Theta_2}(z)$, $(z\in\dC_\pm)$.
\end{theorem}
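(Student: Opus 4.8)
The plan is to read Theorem~\ref{t-17} as the two-sided packaging of Theorem~\ref{t-16}: one implication is literally Theorem~\ref{t-16}, and the reverse implication is an immediate consequence of the definition of equal L-systems. So I would split the argument accordingly and keep it short.

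For the nontrivial direction, assume $V_{\Theta_1}(z)=V_{\Theta_2}(z)$ for all $z\in\dC_\pm$. Since $\mu_1=\mu_2=\mu$ and $h_1=h_2=h$, the hypotheses of Theorem~\ref{t-16} are met, and that theorem already yields $m_{\infty,1}(z)=m_{\infty,2}(z)$ ($z\in\dC_\pm$) and $\Theta_1=\Theta_2$. If a self-contained account is preferred, I would reproduce the short chain inside Theorem~\ref{t-16}: equating the two instances of formula~\eqref{1501} and clearing denominators produces
$$
\bigl(m_{\infty,1}(z)-m_{\infty,2}(z)\bigr)\bigl(\mu^2-2\mu\,\RE h+|h|^2\bigr)=0,\qquad z\in\dC_\pm,
$$
and the second factor cannot vanish because its roots $\mu=\RE h\pm(\IM h)i$ are non-real while $\mu$ is real; hence $m_{\infty,1}\equiv m_{\infty,2}$, the Borg--Marchenko uniqueness theorem forces $q_1=q_2$, and then $\Theta_1=\Theta_{\mu,h}=\Theta_2$ because the two systems are now built from identical data.

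For the converse, suppose $\Theta_1=\Theta_2$. By Definition~\ref{d-14} this means $\calH_{1+}=\calH_{2+}$, $\bA_1=\bA_2$, and $K_1=K_2$; taking the real part of $\bA$ gives $\RE\bA_1=\RE\bA_2$, and passing to adjoints in $[\calH_+,\dC]$ gives $K_1^*=K_2^*$. Substituting into the defining formula~\eqref{e6-3-5} for the impedance function shows $V_{\Theta_1}(z)=V_{\Theta_2}(z)$ wherever both resolvents exist, in particular for $z\in\dC_\pm$.

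There is essentially no obstacle here beyond bookkeeping: the entire analytic weight of the statement — moving from equal impedance functions to equal Weyl--Titchmarsh functions, and then invoking Borg--Marchenko — has already been carried in Theorem~\ref{t-16}. Accordingly I expect the write-up to be only a few lines, amounting to a citation of Theorem~\ref{t-16} for one direction and an unwinding of Definition~\ref{d-14} together with \eqref{e6-3-5} for the other.
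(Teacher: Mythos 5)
Your proposal is correct and follows exactly the paper's own argument: the forward implication is unwound from Definition~\ref{d-14} and formula~\eqref{e6-3-5}, and the reverse implication is a direct citation of Theorem~\ref{t-16}. Your expanded sketch of the Theorem~\ref{t-16} chain (including the quadratic $\mu^2-2\mu\RE h+|h|^2$ with non-real roots) matches the paper's reasoning as well.
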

  \begin{proof}
In one direction the theorem is obvious. If $\Theta_1=\Theta_2$, then clearly Definition \ref{d-14} implies that  $V_{\Theta_1}(z)=V_{\Theta_2}(z)$, $(z\in\dC_\pm)$.

In the other direction follows  from Theorem \ref{t-16}. Indeed, if $V_{\Theta_1}(z)=V_{\Theta_2}(z)$, $(z\in\dC_\pm)$, then according to Theorem \ref{t-16} we have that $m_{\infty,1}(z)=m_{\infty,2}(z)$, ($z\in\dC_\pm$) and $\Theta_1=\Theta_2$.
 \end{proof}
Now let us consider the case of Schr\"odin\-ger L-systems that share the same main operator but have different impedance functions.
 \begin{theorem}\label{t-18}%
 Let  $\Theta_1=\Theta_{\mu_1, h}$ and $\Theta_2=\Theta_{\mu_2, h}$ be Schr\"odin\-ger L-systems of the form \eqref{e-75-1} and \eqref{e-76-2} with
 \begin{equation}\label{e-80-inv}
\mu_2=\frac{\mu_1\RE h-|h|^2}{\mu_1-\RE h}.
 \end{equation}
 If
\begin{equation}\label{e-81-inv}
  V_{\Theta_1}(z)=-\frac{1}{V_{\Theta_2}(z)}, \quad (z\in\dC_\pm),
\end{equation}
  then $m_{\infty,1}(z)=m_{\infty,2}(z)$, ($z\in\dC_\pm$) and $\Theta_1$ and $\Theta_2$ share the same main operator.
\end{theorem}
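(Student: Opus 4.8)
The plan is to reduce the hypothesis \eqref{e-81-inv} to the identity \eqref{e-80-eqv} that was already analyzed in the proof of Theorem \ref{t-16}. First I would note that the relation \eqref{e-80-inv} between $\mu_1$ and $\mu_2$ is exactly the $\xi$-relation occurring in \eqref{e-35-mu-xi}, but it must now be read with respect to the \emph{second} symmetric operator $\dA_2$. Concretely, form the auxiliary Shr\"odinger L-system $\Theta_{\mu_1,h}$ built over $\dA_2$ (same boundary data $\mu_1,h$ but potential $q_2$); its impedance function $\widetilde V(z)$ is given by \eqref{1501} with $m_{\infty,2}(z)$ in place of $m_\infty(z)$. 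A short substitution of $\mu_2=(\mu_1\RE h-|h|^2)/(\mu_1-\RE h)$ into \eqref{1501} gives $\mu_2-\RE h=-(\IM h)^2/(\mu_1-\RE h)$ and $\mu_2\RE h-|h|^2=-\mu_1(\IM h)^2/(\mu_1-\RE h)$, so the denominator of $V_{\Theta_2}(z)$ collapses to a scalar multiple of $m_{\infty,2}(z)+\mu_1$, and one reads off $-1/V_{\Theta_2}(z)=\widetilde V(z)$ (this is the operator-$\dA_2$ analogue of the middle identity in \eqref{e-35-mu-xi}).

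Hence \eqref{e-81-inv} becomes $V_{\Theta_1}(z)=\widetilde V(z)$, i.e.
\begin{equation*}
\frac{m_{\infty,1}(z)+\mu_1}{(\mu_1-\RE h)m_{\infty,1}(z)+\mu_1\RE h-|h|^2}=\frac{m_{\infty,2}(z)+\mu_1}{(\mu_1-\RE h)m_{\infty,2}(z)+\mu_1\RE h-|h|^2},\quad z\in\dC_\pm,
\end{equation*}
which is precisely equation \eqref{e-80-eqv} with common parameter $\mu=\mu_1$. At this point I would repeat the algebra from the proof of Theorem \ref{t-16}: cross-multiplying and cancelling yields $\bigl(m_{\infty,1}(z)-m_{\infty,2}(z)\bigr)\bigl(\mu_1^2-2\mu_1\RE h+|h|^2\bigr)=0$ for all $z\in\dC_\pm$, and since $\mu_1\in\dR$ and $\IM h\ne0$ the factor $\mu_1^2-2\mu_1\RE h+|h|^2=(\mu_1-\RE h)^2+(\IM h)^2$ is strictly positive; therefore $m_{\infty,1}(z)=m_{\infty,2}(z)$ for all $z\in\dC_\pm$.

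Finally, with $m_{\infty,1}\equiv m_{\infty,2}$ established, I would invoke the Borg--Marchenko uniqueness theorem \cite{Borg}, \cite{Mar} (as in Theorems \ref{t-15} and \ref{t-16}) to get $q_1\equiv q_2$, so that $\dA_1=\dA_2$ and the main operators $T_h$ of $\Theta_1$ and $\Theta_2$, determined by \eqref{131} from the common potential and the common value $h$, coincide; this delivers both assertions of the theorem. The only point needing extra care is the degenerate range of $\mu_1$: when $\mu_1=\RE h$ one has $\mu_2=\infty$, and when $\mu_1=\infty$ one has $\mu_2=\RE h$. In these cases the identity $-1/V_{\Theta_2}(z)=\widetilde V(z)$ and the ensuing reduction to \eqref{e-80-eqv} should be obtained by passing to the appropriate limit in \eqref{1501}, exactly as in the proof of Theorem \ref{t-7}. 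I expect this bookkeeping of the endpoint values of $\mu$ to be the only mild obstacle; the substance of the argument is the reduction to the already-settled identity \eqref{e-80-eqv}.
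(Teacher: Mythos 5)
Your proposal is correct and follows essentially the same route as the paper: substitute the relation \eqref{e-80-inv} into \eqref{1501}, observe that $-1/V_{\Theta_2}(z)$ reduces to the impedance expression with parameter $\mu_1$ and $m_{\infty,2}$ in place of $m_{\infty,1}$, thereby recovering \eqref{e-80-eqv}, and then repeat the algebra of Theorem \ref{t-16} and invoke Borg--Marchenko. Your explicit computation of $\mu_2-\RE h$ and $\mu_2\RE h-|h|^2$, the observation that $\mu_1^2-2\mu_1\RE h+|h|^2=(\mu_1-\RE h)^2+(\IM h)^2>0$, and the remark about the endpoint values $\mu_1=\RE h$ and $\mu_1=\infty$ are all sound refinements of the same argument.
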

  \begin{proof}
 Formula \eqref{e-81-inv} together with  \eqref{1501} yields
 $$
\frac{(m_{\infty,1}(z)+\mu_1)\IM h}{\left(\mu_1-\RE h\right)m_{\infty,1}(z)+\mu_1\RE h-|h|^2}=-\frac{\left(\mu_2-\RE h\right)m_{\infty,2}(z)+\mu_2\RE h-|h|^2}{(m_{\infty,2}(z)+\mu_2)\IM h}.
$$
Substituting \eqref{e-80-inv} into the right hand side of the above and simplifying gives us
$$
\frac{m_{\infty,1}(z)+\mu_1}{\left(\mu_1-\RE h\right)m_{\infty,1}(z)+\mu_1\RE h-|h|^2}=\frac{m_{\infty,2}(z)+\mu_1}{\left(\mu_1-\RE h\right)m_{\infty,2}(z)+\mu_1\RE h-|h|^2},
$$
that is exact analogue of \eqref{e-80-eqv} from the proof of Theorem \ref{t-16}. Following the corresponding steps of the proof of Theorem \ref{t-16} we obtain that $m_{\infty,1}(z)=m_{\infty,2}(z)$, ($z\in\dC_\pm$). Then we apply the Borg-Marchenko uniqueness theorem \cite{Borg}, \cite{Mar} and conclude that the potentials $q_1(x)$ and $q_2(x)$ in \eqref{e-77-1} and \eqref{e-77-2} are the same. Consequently, the L-systems $\Theta_1$ and $\Theta_2$ share the same main operator.
   \end{proof}
Below we provide a generalized version of Theorem \ref{t-18}.
 \begin{theorem}\label{t-19}%
 Let  $\Theta_1=\Theta_{\mu,h}$ and $\Theta_2=\Theta_{\mu(\alpha),h}$ be Schr\"odin\-ger L-systems of the form \eqref{e-75-1} and \eqref{e-76-2} with $h=h_1=h_2$, $\mu_1=\mu$, and
 \begin{equation}\label{e-86-alpha}
\mu_2=   \mu(\alpha)=\frac{h(\mu-\bar h)+e^{2i\alpha}(\mu-h)\bar h}{\mu-\bar h+e^{2i\alpha}(\mu-h)}.
 \end{equation}
 If
\begin{equation}\label{e-87-Don}
  V_{\Theta_{2}}(z)=\frac{\cos\alpha+(\sin\alpha) V_{\Theta_{1}}(z)}{\sin\alpha-(\cos\alpha) V_{\Theta_{1}}(z)},
\end{equation}
  then $m_{\infty,1}(z)=m_{\infty,2}(z)$, ($z\in\dC_\pm$) and $\Theta_1$ and $\Theta_2$ share the same main operator.

  Conversely, if two Schr\"odin\-ger L-systems $\Theta_1=\Theta_{\mu_1,h}$ and $\Theta_2=\Theta_{\mu_2,h}$ of the form \eqref{e-75-1} and \eqref{e-76-2} share the same main operator, then their impedance functions $V_{\Theta_{1}}(z)$ and $V_{\Theta_{2}}(z)$ are related by \eqref{e-87-Don} and $\mu_2$ and $\mu_1$ are connected with \eqref{e-86-alpha}.
\end{theorem}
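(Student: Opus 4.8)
The plan is to prove the two implications separately, in each case reducing to Lemma~\ref{l-2} and Theorem~\ref{t-16} after collapsing the two a priori distinct symmetric operators $\dA_1,\dA_2$ to a common one.

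For the direct implication, assume \eqref{e-87-Don} and \eqref{e-86-alpha}. I would introduce the auxiliary Schr\"odinger L-system $\wt\Theta:=\Theta_{\mu(\alpha),h}$ of the form \eqref{149} built on the \emph{same} symmetric operator $\dA_1$ as $\Theta_1$, so that the Weyl--Titchmarsh function entering \eqref{150} and \eqref{1501} for $\wt\Theta$ is $m_{\infty,1}$. Since $\mu(\alpha)$ is given by \eqref{e-86-alpha}, which is exactly \eqref{e-23-mu-alpha}, and the algebraic steps in the proof of Lemma~\ref{l-2} are reversible, \eqref{150} gives $W_{\wt\Theta}(z)=(-e^{2i\alpha})\,W_{\Theta_1}(z)$; by the relation \eqref{e-63-alpha}--\eqref{e-64-alpha} between transfer and impedance functions this forces $V_{\wt\Theta}(z)$ to equal $\dfrac{\cos\alpha+(\sin\alpha)V_{\Theta_1}(z)}{\sin\alpha-(\cos\alpha)V_{\Theta_1}(z)}$, which by \eqref{e-87-Don} is precisely $V_{\Theta_2}(z)$. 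Thus $\wt\Theta$ and $\Theta_2$ are Schr\"odinger L-systems of the form \eqref{149} with the same underlying parameters $\mu(\alpha)$ and $h$ and with matching impedance functions on $\dC_\pm$, so Theorem~\ref{t-16} yields $m_{\infty,1}(z)=m_{\infty,2}(z)$ on $\dC_\pm$ and $\wt\Theta=\Theta_2$. As $\wt\Theta$ and $\Theta_1$ share the main operator $T_h$ of $\dA_1$ by construction, so do $\Theta_1$ and $\Theta_2$.

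For the converse I would argue directly. If $\Theta_1=\Theta_{\mu_1,h}$ and $\Theta_2=\Theta_{\mu_2,h}$ have the same main operator, then two operators of the form \eqref{131} with the same $h$ and potentials $q_1,q_2$ coincide; evaluating both on a common element of the domain gives $(q_1-q_2)y\equiv 0$, hence $q_1=q_2$ almost everywhere on $[\ell,+\infty)$, so $\dA_1=\dA_2$ and $m_{\infty,1}(z)=m_{\infty,2}(z)=:m_\infty(z)$. Then \eqref{150} reads $W_{\Theta_j}(z)=\dfrac{\mu_j-h}{\mu_j-\bar h}\cdot\dfrac{m_\infty(z)+\bar h}{m_\infty(z)+h}$ for $j=1,2$; since each $\mu_j$ is real (or $\infty$) and $|\mu_j-h|=|\mu_j-\bar h|$, the factor $\dfrac{\mu_j-h}{\mu_j-\bar h}$ is unimodular, so $W_{\Theta_2}(z)=U\,W_{\Theta_1}(z)$ with $U:=\dfrac{(\mu_2-h)(\mu_1-\bar h)}{(\mu_2-\bar h)(\mu_1-h)}$ of modulus one. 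Writing $U=-e^{2i\alpha}$ with $\alpha\in[0,\pi)$, the correspondence \eqref{e-63-alpha}--\eqref{e-64-alpha} gives \eqref{e-87-Don}, and solving $\dfrac{\mu_2-h}{\mu_2-\bar h}=U\,\dfrac{\mu_1-h}{\mu_1-\bar h}$ for $\mu_2$ exactly as in the proof of Lemma~\ref{l-2} produces \eqref{e-86-alpha}; the limiting cases ($\mu_1$ or $\mu_2$ equal to $\infty$, or a vanishing denominator in \eqref{e-86-alpha}) are absorbed into the usual conventions for $\mu\in\bbR\cup\{\pm\infty\}$.

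The computations needed are the same routine linear-fractional manipulations already carried out in Lemma~\ref{l-2}, so no essentially new calculation is required. The only genuine obstacle is that the Donoghue-transform machinery of \cite[Section~8.3]{ABT} is designed for a single symmetric operator, so in the direct implication it cannot be applied to $\Theta_1$ and $\Theta_2$ directly; the auxiliary system $\wt\Theta$ on $\dA_1$, together with Theorem~\ref{t-16} (which rests on the Borg--Marchenko uniqueness theorem) to identify $\dA_1$ with $\dA_2$, is exactly what bridges the two operators.
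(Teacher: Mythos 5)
Your proof is correct, and both halves rest on the same two pillars as the paper's (the M\"obius correspondence $W\mapsto V$ from \eqref{e6-3-3}--\eqref{e6-3-5} and the Borg--Marchenko uniqueness theorem), but the packaging is genuinely different. In the direct implication the paper works with both Weyl functions at once: it passes from \eqref{e-87-Don} to $W_{\Theta_2}=(-e^{2i\alpha})W_{\Theta_1}$, substitutes \eqref{e-86-alpha} into \eqref{150}, and grinds the resulting identity down to $2\IM h\,(m_{\infty,1}-m_{\infty,2})(\mu^2-2\mu\RE h+|h|^2)=0$, re-running the no-real-roots argument already used for Theorem~\ref{t-16}. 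Your auxiliary system $\wt\Theta=\Theta_{\mu(\alpha),h}$ built on $\dA_1$ outsources exactly that computation to Theorem~\ref{t-16}: the only new algebra you need is the (reversible) identity $\frac{\mu(\alpha)-h}{\mu(\alpha)-\bar h}=(-e^{2i\alpha})\frac{\mu-h}{\mu-\bar h}$ from Lemma~\ref{l-2}, after which equality of the two impedance functions with equal parameters $(\mu(\alpha),h)$ does the rest. This is cleaner and makes the logical dependence on Theorem~\ref{t-16} explicit, at the small cost of having to note that $\mu(\alpha)$ in \eqref{e-86-alpha} is indeed real (so that $\wt\Theta$ is a legitimate system of the form \eqref{149}) --- worth a line, since the formula is not manifestly real. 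In the converse the paper simply cites \cite[Theorems 8.2.3 and 8.3.1]{ABT} for the existence of $\alpha$ with $W_{\Theta_2}=(-e^{2i\alpha})W_{\Theta_1}$ and then invokes Lemma~\ref{l-2}; you instead extract $q_1=q_2$ directly from equality of the main operators and exhibit the unimodular factor $U$ explicitly from \eqref{150}, which is more elementary and self-contained. Both routes are sound; yours buys a shorter computation in the forward direction and fewer external citations in the converse, while the paper's keeps the whole argument inside one displayed chain of identities.
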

\begin{proof}
 Our proof will be based on the method shown in the proof of Lemma \ref{l-2}.    It was shown in \cite[Section 8.3]{ABT} that if the impedance functions $V_{\Theta_{2}}(z)$ and $V_{\Theta_{1}}$ are connected by the Donoghue transform \eqref{e-87-Don} (see also \eqref{e-64-alpha} and \eqref{e-22-alpha}), then the corresponding transfer functions are related by
\begin{equation}\label{e-88-W}
    W_{\Theta_{\mu(\alpha), h}}(z)=(-e^{2i\alpha})\cdot W_{\Theta_{\mu, h}}(z).
\end{equation}
Combining \eqref{e-88-W} with \eqref{150} above and setting $U=-e^{2i\alpha}$ temporarily we obtain
\begin{equation}\label{e-89-prom}
\frac{\mu_2 -h}{\mu_2 - \overline h}\,\,\frac{m_{\infty,2}(z)+ \overline h}{m_{\infty,2}(z)+h}=U\cdot\frac{\mu -h}{\mu - \overline h}\,\, \frac{m_{\infty,1}(z)+ \overline h}{m_{\infty,1}(z)+h}.
\end{equation}
Substituting the value of $\mu_2$ from \eqref{e-86-alpha} into the first factor of left hand side above  and simplifying we obtain
\begin{equation}\label{e-90-prom}
    \frac{\mu_2 -h}{\mu_2 - \overline h}=U\cdot\frac{\mu(h-\bar h)+|h|^2-h^2}{\mu(h-\bar h)-|h|^2+\bar h^2}=U\cdot\frac{\mu-hi}{\mu-\bar hi}.
\end{equation}
Plugging \eqref{e-90-prom} into the left side of \eqref{e-89-prom} allows us to cancel $U$ and obtain
$$
\frac{\mu-hi}{\mu-\bar hi}\cdot\frac{m_{\infty,2}(z)+ \overline h}{m_{\infty,2}(z)+h}=\frac{\mu -h}{\mu - \overline h}\cdot\frac{m_{\infty,1}(z)+ \overline h}{m_{\infty,1}(z)+h},
$$
Performing further algebraic manipulations leads us to
$$
2\IM h\Big(m_{\infty,1}(z)-m_{\infty,2}(z)\Big)(\mu^2-2\mu\RE\mu+|h|^2)=0,\quad (\forall z\in\dC_\pm).
$$
Since $\IM h>0$ and, as we have shown in the proof of Theorem \ref{t-16}, the quadratic equation $\mu^2-2\mu\RE\mu+|h|^2=0$ does not have any real solutions we conclude that $m_{\infty,1}(z)=m_{\infty,2}(z)$, ($z\in\dC_\pm$). Now we apply the Borg-Marchenko uniqueness theorem \cite{Borg}, \cite{Mar} to conclude that the potentials $q_1(x)$ and $q_2(x)$ in \eqref{e-77-1} and \eqref{e-77-2} are the same. Consequently, the L-systems $\Theta_1$ and $\Theta_2$ share the same main operator.

Conversely, let two Schr\"odin\-ger L-systems $\Theta_1=\Theta_{\mu_1,h}$ and $\Theta_2=\Theta_{\mu_2,h}$ of the form \eqref{e-75-1} and \eqref{e-76-2} share the same main operator. Then according to \cite[Theorem 8.2.3]{ABT}
\begin{equation}\label{e-91-W}
    W_{\Theta_{\mu_2, h}}(z)=(-e^{2i\alpha})\cdot W_{\Theta_{\mu_1, h}}(z),
\end{equation}
for some $\alpha\in(0,\pi]$. As it was shown in \cite[Theorem 8.3.1]{ABT}, the corresponding impedance functions  $V_{\Theta_{1}}(z)$ and $V_{\Theta_{2}}(z)$ are related by  the Donoghue transform \eqref{e-87-Don} for the same value of $\alpha$. Furthermore, applying  Lemma \ref{l-2} gives us  connection \eqref{e-86-alpha} between $\mu_2$ and $\mu_1$.
\end{proof}
As we can see the result of Theorem \ref{t-18} follows from Theorem \ref{t-19} if one sets $\alpha=\pi$ in \eqref{e-86-alpha} and \eqref{e-87-Don}.

\section{Examples}

We conclude this paper with a couple of simple illustrations. Consider the differential expression with the Bessel potential
\[
l_\nu=-\frac{d^2}{dx^2}+\frac{\nu^2-1/4}{x^2},\;\; x\in [1,\infty)
\]
of order $\nu>0$ in the Hilbert space $\calH=L^2[1,\infty)$.
The minimal symmetric operator
\begin{equation}\label{ex-128}
 \left\{ \begin{array}{l}
 \dA\, y=-y^{\prime\prime}+\frac{\nu^2-1/4}{x^2}y \\
 y(1)=y^{\prime}(1)=0 \\
 \end{array} \right.
\end{equation}
 generated by this expression and boundary conditions has defect numbers $(1,1)$. 
 Consider also the operator
\begin{equation}\label{ex-79}
 \left\{ \begin{array}{l}
 T_h\, y=-y^{\prime\prime}+\frac{\nu^2-1/4}{x^2}y \\
 y'(1)=h y(1). \\
 \end{array} \right.
\end{equation}

\subsection*{Example 1} Let $\nu=1/2$. It is known \cite{ABT} that in this case the operator $\dA$ is nonnegative and
$$
m_{\infty}(z)= -{i}{\sqrt{z}}.
$$
The minimal symmetric operator in \eqref{ex-128} then becomes
$$
 \left\{ \begin{array}{l}
 \dA\, y=-y^{\prime\prime} \\
 y(1)=y^{\prime}(1)=0. \\
 \end{array} \right.
$$
The main operator $T_h$ is written for $h=i$ in \eqref{ex-79} as
\begin{equation}\label{ex-80}
 \left\{ \begin{array}{l}
 T_{i}\, y=-y^{\prime\prime} \\
 y'(1)=i\, y(1) \\
 \end{array} \right.
\end{equation}
and it will be shared by two L-systems realizing the functions $(-m_\infty(z))$  and $(1/m_\infty(z))$.
Note, that this operator $T_i$ in \eqref{ex-80} is accretive extremal (not $\beta$-sectorial for any $\beta\in(0,\pi/2)$) since $\RE h=0=m_\infty(-0)$.

 We begin by constructing an L-system $\Theta_{0, i}$ of the form \eqref{e-38-sys} that realizes
 \begin{equation}\label{e-ex-86}
-m_\infty(z)={i}{\sqrt{z}}
 \end{equation}
  according to Theorem \ref{t-6}.
 The quasi-kernel of the real part of the state-space operator of $\Theta_{0, i}$  is determined by \eqref{e-42-quasi} as follows
\begin{equation}\label{ex-81}
 \left\{ \begin{array}{l}
\hat A_{0}\, y=-y^{\prime\prime} \\
 y(1)=0.\\
 \end{array} \right.
\end{equation}
Applying \eqref{e-38-sys}--\eqref{e-40-g} to our case we obtain
\begin{equation}\label{e-ex-83-sys}
    \Theta_{0, i}= \begin{pmatrix} \bA_{0, i}&K_{0, i}&1\cr \calH_+ \subset
L_2[1,+\infty) \subset \calH_-& &\dC\cr \end{pmatrix},
\end{equation}
where
\begin{equation}\label{e-ex-84}
\begin{split}
&\bA_{0,i}\, y=-y^{\prime\prime}-i\,[y^{\prime}(1)-iy(1)]\,\delta^{\prime}(x-1), \\
&\bA^*_{0,i}\, y=-y^{\prime\prime}+i\,[y^{\prime}(1)+iy(1)]\,\delta^{\prime}(x-1),
\end{split}
\end{equation}
$K_{0, i}{c}=cg_{0, i}$, $(c\in \dC)$ and
\begin{equation}\label{e-ex-90-g}
g_{0, i}=\delta^{\prime}(x-1).
\end{equation}
As Theorem \ref{t-11} states the L-system $\Theta_{0, i}$ in \eqref{e-ex-83-sys} is not accretive. Also,
\begin{equation}\label{e-ex-91-VW}
    V_{\Theta_{0, i}}(z)={i}{\sqrt{z}}\quad \textrm{and }\quad W_{\Theta_{0, i}}(z)=\frac{1+\sqrt{z}}{1-\sqrt{z}}.
\end{equation}

Now we build an L-system $\Theta_{\infty,i}$ of the form \eqref{e-49-sys} that realizes
 \begin{equation}\label{e-ex-91}
\frac{1}{m_\infty(z)}=\frac{i}{\sqrt{z}}
 \end{equation}
  according to Theorem \ref{t-7}. The quasi-kernel of the real part of the state-space operator of $\Theta_{\infty, i}$  is given by \eqref{e-53-quasi} and in our case is
$$
 \left\{ \begin{array}{l}
\hat A_{\infty,i}\, y=-y^{\prime\prime} \\
 y'(1)=0 .\\
 \end{array} \right.
$$
Applying \eqref{e-49-sys}--\eqref{e-51-g} to our case we obtain
\begin{equation}\label{e-93-sys}
    \Theta_{\infty, i}= \begin{pmatrix} \bA_{\infty, i}&K_{\infty, i}&1\cr \calH_+ \subset
L_2[1,+\infty) \subset \calH_-& &\dC\cr \end{pmatrix},
\end{equation}
where
\begin{equation}\label{e-ex-93}
\begin{split}
&\bA_{\infty,i}\, y=-y^{\prime\prime}-\,[y^{\prime}(1)-iy(1)]\,\delta(x-1), \\
&\bA^*_{\infty,i}\, y=-y^{\prime\prime}-\,[y^{\prime}(1)+iy(1)]\,\delta(x-1),
\end{split}
\end{equation}
$K_{\infty, i}{c}=cg_{\infty, i}$, $(c\in \dC)$ and
\begin{equation}\label{e-ex-94-g}
g_{\infty, i}=\delta(x-1).
\end{equation}
By direct check using \eqref{e-52-real} one confirms that operator $\bA_{\infty,i}$ in \eqref{e-ex-93} is accretive as follows
$$
(\RE\bA_{\infty,i}\, y,y)=\|y'(x)\|^2_{L_2}\ge0.
$$
As Theorem \ref{t-11} states the L-system $\Theta_{\infty, i}$ in \eqref{e-93-sys} is extremal accretive. Also,
\begin{equation}\label{e-ex-95-VW}
    V_{\Theta_{\infty, i}}(z)=\frac{i}{\sqrt{z}}\quad \textrm{and }\quad W_{\Theta_{\infty, i}}(z)=-\frac{1+\sqrt{z}}{1-\sqrt{z}}.
\end{equation}

\subsection*{Example 2} In this example we are going to illustrate Theorem \ref{t-13} by constructing two accretive L-systems  realizing functions $(-m_\alpha(z))$ that correspond to two endpoints of the parametric interval for $\mu=\tan\alpha$ depicted in Figure \ref{fig-1}.

Let $\nu=3/2$. It is known \cite{ABT} that in this case
$$
m_{\infty}(z)= -\frac{iz-\frac{3}{2}\sqrt{z}-\frac{3}{2}i}{\sqrt{z}+i}-\frac{1}{2}=\frac{\sqrt{z}-iz+i}{\sqrt{z}+i}=1-\frac{iz}{\sqrt{z}+i}
$$
and
$$
m_{\infty}(-0)=1.
$$
The minimal symmetric operator then becomes
\begin{equation}\label{ex-e8-15}
 \left\{ \begin{array}{l}
 \dA\, y=-y^{\prime\prime}+\frac{2}{x^2}y \\
 y(1)=y^{\prime}(1)=0. \\
 \end{array} \right.
\end{equation}
The main operator $T_h$ is written for $h=i$ in \eqref{ex-79} as
\begin{equation}\label{ex-135}
 \left\{ \begin{array}{l}
 T_{i}\, y=-y^{\prime\prime}+\frac{2}{x^2}y \\
 y'(1)=i\, y(1) \\
 \end{array} \right.
\end{equation}
and it will be shared by all the family of L-systems realizing functions $(-m_\alpha(z))$ described by \eqref{e-62-psi}-\eqref{e-59-LFT}. This operator is accretive and $\beta$-sectorial since $\RE h=0>-m_\infty(-0)=-1$ with the exact angle of sectoriality given by (see \eqref{e10-45})
\begin{equation}\label{e-ex-98}
\tan\beta=\frac{\IM h}{\RE h+m_{\infty}(-0)}=\frac{1}{0+1}=1\quad\textrm{ or}\quad \beta=\frac{\pi}{4}.
\end{equation}
We will construct a family of  L-systems $\Theta_{\tan\alpha, i}$ of the form \eqref{e-64-sys} that realizes functions $(-m_\alpha(z))$ described by \eqref{e-62-psi}--\eqref{e-61-Don} as
 \begin{equation}\label{e-ex-99}
    \begin{aligned}
-m_\alpha(z)&=\frac{\cos\alpha+\frac{1}{m_\infty(z)}\sin\alpha}{\sin\alpha-\frac{1}{m_\infty(z)}\cos\alpha}=\frac{\cos\alpha+\frac{\sqrt{z}+i}{\sqrt{z}-iz+i}\sin\alpha}{\sin\alpha-\frac{\sqrt{z}+i}{\sqrt{z}-iz+i}\cos\alpha}\\
&=\frac{({\sqrt{z}-iz+i})\cos\alpha+({\sqrt{z}+i})\sin\alpha}{({\sqrt{z}-iz+i})\sin\alpha-({\sqrt{z}+i})\cos\alpha},
    \end{aligned}
 \end{equation}
  according to Theorem \ref{t-8}.
 The quasi-kernel of the real part of the state-space operator of $\Theta_{\tan\alpha, i}$  is determined by  \eqref{e-70-quasi} as
\begin{equation}\label{e-100-quasi}
    \left\{ \begin{array}{l}
 \hat A_{\tan\alpha,i}\, y=-y^{\prime\prime}+\frac{2}{x^2}y \\
 y(1)=-(\tan\alpha)\,y'(1) \\
 \end{array} \right..
\end{equation}
Using \eqref{e-64-sys} we get
\begin{equation}\label{e-101-sys}
    \Theta_{\tan\alpha, i}= \begin{pmatrix} \bA_{\tan\alpha, i}&K_{\tan\alpha, i}&1\cr \calH_+ \subset
L_2[1,+\infty) \subset \calH_-& &\dC\cr \end{pmatrix},
\end{equation}
where
\begin{equation*}\label{e-102-star}
\begin{split}
&\bA_{\tan\alpha,i}\, y=-y^{\prime\prime}+\frac{2}{x^2}y-\frac{1}{\tan\alpha-i}[y^{\prime}(1)-iy(1)][(\tan\alpha)\delta(x-1)+\delta^{\prime}(x-1)], \\
&\bA^*_{\tan\alpha,i}\, y=-y^{\prime\prime}+\frac{2}{x^2}y-\frac{1}{\tan\alpha+i}\,[y^{\prime}(1)+iy(1)][(\tan\alpha)\delta(x-1)+\delta^{\prime}(x-1)],
\end{split}
\end{equation*}
$K_{\tan\alpha, i}\,{c}=c\,g_{\tan\alpha, i}$, $(c\in \dC)$ and
\begin{equation*}\label{e-103-g}
g_{\tan\alpha, i}=(\tan\alpha)\delta(x-1)+\delta^{\prime}(x-1).
\end{equation*}
Also,
\begin{equation}\label{e-104-VW}
\begin{aligned}
    V_{\Theta_{\tan\alpha, i}}(z)&=-m_\alpha(z)=\frac{({\sqrt{z}-iz+i})\cos\alpha+({\sqrt{z}+i})\sin\alpha}{({\sqrt{z}-iz+i})\sin\alpha-({\sqrt{z}+i})\cos\alpha}\\
     W_{\Theta_{\tan\alpha, i}}(z)&=(-e^{2\alpha i})\,\frac{m_\infty(z)-i}{m_\infty(z)+i}=(-e^{2\alpha i})\,\frac{\sqrt{z}-2i\sqrt{z}+1+i}{\sqrt{z}-1+i}.
    \end{aligned}
\end{equation}
According to Theorem \ref{t-13} the L-systems  $\Theta_{\tan\alpha, i}$ in \eqref{e-101-sys} are accretive if
$$
 1=\frac{1}{m_\infty(-0)}\le\tan\alpha<+\infty.
$$
In addition, as we have shown in Theorem \ref{t-13} (see also Figure \ref{fig-1}), the realizing L-system $\Theta_{\tan\alpha, i}$ in \eqref{e-101-sys} becomes accretive extremal if
$$
\mu=\tan\alpha=\frac{1}{m_\infty(-0)}=1,
$$
and hence $\alpha=\pi/4$ makes an extremal L-system
\begin{equation}\label{e-105-sys}
    \Theta_{1, i}= \begin{pmatrix} \bA_{1, i}&K_{1, i}&1\cr \calH_+ \subset
L_2[1,+\infty) \subset \calH_-& &\dC\cr \end{pmatrix},
\end{equation}
where
\begin{equation*}\label{e-104-star}
\begin{split}
&\bA_{1,i}\, y=-y^{\prime\prime}+\frac{2}{x^2}y-\frac{1}{1-i}[y^{\prime}(1)-iy(1)][\delta(x-1)+\delta^{\prime}(x-1)], \\
&\bA^*_{1,i}\, y=-y^{\prime\prime}+\frac{2}{x^2}y-\frac{1}{1+i}\,[y^{\prime}(1)+iy(1)][\delta(x-1)+\delta^{\prime}(x-1)],
\end{split}
\end{equation*}
$K_{1, i}\,{c}=c\,g_{1, i}$, $(c\in \dC)$ and
\begin{equation*}\label{e-104-g}
g_{1, i}=\delta(x-1)+\delta^{\prime}(x-1).
\end{equation*}
It is easy to see that since in this case $\tan\alpha=m_\infty(-0)=1$, the quasi-kernel $\hat A_{1,i}$ of $\RE\bA_{1,i}$ in \eqref{e-100-quasi}  with $\alpha=\pi/4$ becomes  the Krein-von Neumann extension of  operator $\dA$ and has boundary conditions $y(1)=-y'(1)$. Also,
\begin{equation}\label{e-ex-104-VW}
\begin{aligned}
    V_{\Theta_{1, i}}(z)&=-m_{\frac{\pi}{4}}(z)=\frac{{\sqrt{z}-iz+i}+{\sqrt{z}+i}}{{\sqrt{z}-iz+i}-{\sqrt{z}+i}}=1-\frac{2}{\sqrt{z}}+2i,\\
     W_{\Theta_{1, i}}(z)&=(-e^{\frac{\pi}{2} i})\,\frac{m_\infty(z)-i}{m_\infty(z)+i}=\frac{1-i\sqrt{z}-2\sqrt{z}-i}{\sqrt{z}-1+i}.
    \end{aligned}
\end{equation}
Thus, $\Theta_{1, i}$ in \eqref{e-105-sys} represents an extremal (not $\beta$-sectorial for any $\beta\in(0,\pi/2)$) L-system with $\frac{\pi}{4}$-sectorial main operator $T_i$ of the form \eqref{ex-135}.

Now we are going to address the other end of the parametric interval described in Theorem \ref{t-13} and depicted in Figure \ref{fig-1}. According to part (2c) of Theorem \ref{t-13}, the realizing L-system $\Theta_{\tan\alpha, i}$ in \eqref{e-101-sys} preserves the angle of sectoriality and becomes $\frac{\pi}{4}$-sectorial if $\mu=\tan\alpha=+\infty$ or $\alpha=\pi/2$. Therefore the L-system
\begin{equation}\label{e-ex-105-sys}
    \Theta_{\infty, i}= \begin{pmatrix} \bA_{\infty, i}&K_{\infty, i}&1\cr \calH_+ \subset
L_2[1,+\infty) \subset \calH_-& &\dC\cr \end{pmatrix},
\end{equation}
where
\begin{equation}\label{e-ex-106}
\begin{split}
&\bA_{\infty,i}\, y=-y^{\prime\prime}+\frac{2}{x^2}y-\,[y^{\prime}(1)-iy(1)]\,\delta(x-1), \\
&\bA^*_{\infty,i}\, y=-y^{\prime\prime}+\frac{2}{x^2}y-\,[y^{\prime}(1)+iy(1)]\,\delta(x-1),
\end{split}
\end{equation}
$K_{\infty, i}{c}=cg_{\infty, i}$, $(c\in \dC)$ and
\begin{equation*}\label{e-ex-107-g}
g_{\infty, i}=\delta(x-1),
\end{equation*}
realizes the function $-m_{\frac{\pi}{2}}(z)=1/m_\infty(z)$.
Also,
\begin{equation}\label{e-ex-108-VW}
    \begin{aligned}
    V_{\Theta_{\infty, i}}(z)&=-m_{\frac{\pi}{2}}(z)=\frac{1}{m_\infty(z)}=\frac{\sqrt{z}+i}{\sqrt{z}-iz+i}\\
     W_{\Theta_{\infty, i}}(z)&=(-e^{{\pi} i})\,\frac{m_\infty(z)-i}{m_\infty(z)+i}=\frac{(1-i)\sqrt{z}-iz+1+i}{(1+i)\sqrt{z}-iz-1+i}.
     \end{aligned}
\end{equation}
This L-system $\Theta_{\infty, i}$ is clearly accretive according to Theorem \ref{t-11} which is also independently confirmed by direct evaluation
$$
(\RE\bA_{\infty,i}\, y,y)=\|y'(x)\|^2_{L^2}+2\|y(x)/x\|^2_{L^2}\ge0.
$$
The quasi-kernel $\hat A_{\infty,i}$ of $\RE\bA_{\infty,i}$ in \eqref{e-100-quasi} with $\alpha=\pi/2$ has boundary conditions $y'(1)=0$ and is not  the Krein-von Neumann extension of  $\dA$. Moreover,  according to Theorem \ref{t-13} (see also \cite[Theorem 9.8.7]{ABT}) the L-system $\Theta_{\infty, i}$ is $\frac{\pi}{4}$-sectorial. 
Taking into account that $(\IM \bA_{\infty,i}\, y,y)=|y(1)|^2,$ (see formula \eqref{146}) we obtain inequality \eqref{e8-29} with $\beta=\frac{\pi}{4}$, that is
$$(\RE\bA_{\infty,i}\, y,y)\ge |(\IM\bA_{\infty,i}\, y,y)|,$$
or
\begin{equation}\label{e8-26}
\|y'(x)\|^2_{L^2}+2\|y(x)/x\|^2_{L^2}\ge|y(1)|^2.
\end{equation}
Note that inequality \eqref{e8-26} turns into equality on $y(x)=1/x$ which confirms that the angle of sectoriality of the L-system $\Theta_{\infty, i}$ in \eqref{e-ex-105-sys} is exact and equals $\beta=\pi/4$. In addition,
we have shown that  the $\beta$-sectorial  form $(T_i y,y)$ defined on  $\dom(T_i)$  can be extended to the $\beta$-sectorial form $(\bA_{\infty,i}\, y,y)$ defined on $\calH_+=\dom(\dA^*)$ (see \eqref{ex-e8-15}--\eqref{ex-135}) having the exact (for both forms) angle of sectoriality $\beta=\pi/4$. A general problem of extending sectorial sesquilinear forms to sectorial ones was mentioned by T.~Kato in \cite{Ka}. It can also be shown (see \cite{ABT}) that function $-m_{\frac{\pi}{2}}(z)$ in \eqref{e-ex-108-VW} belongs to a sectorial class of Stieltjes functions introduced in \cite{AlTs1}.



\begin{thebibliography}{1}




\bibitem{AlTs1}
D. Alpay, E. Tsekanovski\u{\i}, \textit{Interpolation theory in sectorial Stieltjes classes and explicit system solutions}, Lin. Alg. Appl., \textbf{314},  (2000), 91--136.


\bibitem{ABT}
{Yu.~Arlinski\u{\i}, S.~Belyi, E.~Tsekanovski\u{\i}}
\textit{Conservative Realizations  of Herglotz-Nevanlinna functions}, {Operator Theory: Advances and Applications}, {Vol. 217}, {Birkhauser Verlag},  {2011}.


\bibitem{AT2009} Yu.~Arlinski\u{\i}, E.~Tsekanovski\u{\i}, \textit{M.~Krein's research on semi-bounded operators, its contemporary developments, and applications,} Oper. Theory Adv. Appl., vol. 190,  (2009), 65--112.

\bibitem{ArTs0}
Yu.~Arlinski\u{\i}, E.~Tsekanovski\u{\i}, \textit{Linear systems with Schr\"odinger operators and their transfer functions}, Oper. Theory Adv. Appl., 149, 2004, 47--77.


\bibitem{BMkT}
 S.~Belyi, K.~A.~ Makarov, E.~Tsekanovski\u{\i},  \textit{Conservative  L-systems and the Liv\v{s}ic function}. Methods of Functional Analysis and Topology,  \textbf{21}, no. 2,  (2015),  104--133.


 \bibitem{BT-15}
S.~Belyi,   E.~Tsekanovski\u{\i}, \textit{On Sectorial  L-systems with Shr\"odinger  operator},  Differential Equations, Mathematical Physics, and Applications. Selim Grigorievich Krein Centennial, CONM, vol. \textbf{734}, American Mathematical Society, Providence, RI (2019),   59-76.





\bibitem{BT10}
S.~Belyi,  E.~Tsekanovski\u{\i}, \textit{Stieltjes like functions and inverse problems for systems with Schr\"odinger operator}. Operators and Matrices, vol. 2, No.2,  (2008), 265--296.



\bibitem{BHST1}
S. Belyi, S.~Hassi, H.S.V.~ de~Snoo,  E.~Tsekanovski\u{\i}, \textit{A general realization theorem for matrix-valued Herglotz-Nevanlinna functions}, Linear Algebra and Applications. vol. 419, (2006), 331--358.





\bibitem{Ber}
{Yu.~Berezansky},  \textit{Expansion in eigenfunctions of self-adjoint operators}, {vol. 17}, {Transl. Math. Monographs}, {AMS}, {Providence}, {1968}.


\bibitem{Borg}
{G.~Borg},  \textit{Uniqueness theorems in the spectral theory of $y''+(\lambda-q(x))y=0$},  {Proc. 11th Scandinavian Congress of Mathematicians}, {Johan Grundt Tanums Forlag}, {Oslo}, {(1952)},  276--287.


\bibitem{DanLev90}
{A.A. Danielyan, B.M. Levitan},  \textit{On the asymptotic behaviour of the Titchmarsh-Weyl m-function},  {Izv. Akad. Nauk SSSR Ser. Mat.}, {Vol. 54}, {Issue 3}, {(1990)},  469--479.


%

\bibitem{DMTs}
V.~Derkach, M.M.~Malamud, E.~Tsekanovski\u i, \textit{Sectorial Extensions of Positive Operators}. (Russian), Ukrainian Mat.J. \textbf{41}, No.2, (1989), pp. 151--158.

\bibitem{DoTs}
I.~Dovzhenko and E.~Tsekanovski\u{\i}, \textit{Classes of Stieltjes operator-functions and their conservative realizations}, Dokl. Akad. Nauk SSSR, 311 no. 1 (1990), 18--22.





\bibitem{GT}
F.~Gesztesy, E.~Tsekanovski\u i, \textit{On Matrix-Valued Herglotz Functions}. Math. Nachr. \textbf{218}, (2000), 61--138.


\bibitem{KK74} {I.S.~Kac, M.G.~Krein}, \textit{$R$-functions -- analytic functions mapping the upper halfplane into itself}, {Amer. Math. Soc. Transl.}, {Vol. 2}, {\textbf 103}, {1-18}, {1974}.

\bibitem{Ka}{T.~Kato},  {\textit{Perturbation Theory for Linear Operators}}, {Springer-Verlag}, {1966}.


\bibitem{Levitan}
B.M.~Levitan, Inverse Sturm-Liouville problems. Translated from the Russian by O. Efimov. VSP, Zeist, (1987)


\bibitem{Lv2}
M.S.~Liv\v{s}ic, \textit{Operators, oscillations, waves}. Moscow, Nauka, (1966)







\bibitem{Na68}{M.A.~Naimark}, \textit{Linear Differential Operators II}, {F. Ungar Publ.}, {New York}, {1968}.


\bibitem{Mar}{V.A.~Marchenko}, \textit{Certain problems in the theory of second-order differential operators}, {Doklady Akad. Nauk SSSR}, {Vol. \textbf{244}}, {(1950)},  457--460.


\bibitem{Ti62}
E.C.~Titchmarsh, \textit{Eigenfunction Expansions Associated with Second-Order Differential Equations}. Part I, 2nd ed., Oxford University Press, Oxford, (1962).

\bibitem{Ts2}
E.~Tsekanovski\u{\i}, \textit{Accretive extensions and problems on Stieltjes operator-valued functions relations}, Operator Theory: Adv. and Appl., {\bf 59}, (1992), 328--347.


\bibitem{T87}
E.~Tsekanovski\u i, \textit{Characteristic function and sectorial boundary value problems}. Research on geometry and math. analysis, Proceedings of Mathematical Insittute, Novosibirsk,  {\bf 7}, 180--194 (1987)

\bibitem{Ts81}
E.~Tsekanovski\u i, \textit{Friedrichs and Krein extensions of positive operators and holomorphic contraction semigroups}. Funct. Anal. Appl. {\bf 15}, 308--309 (1981)


\bibitem{Ts80}
E.~Tsekanovski\u i, \textit{Non-self-adjoint accretive extensions of positive operators and theorems of Friedrichs-Krein-Phillips}. Funct. Anal. Appl. {\bf 14}, 156--157 (1980)


\bibitem{TSh1}
{E.~Tsekanovski\u i, Yu.L.~\u Smuljan,}   \textit{The theory of bi-extensions of operators on rigged Hilbert spaces. Unbounded operator colligations and characteristic functions},
 {Russ. Math. Surv.}, {\bf 32},  {(1977)}, {73--131}.

\bibitem{W} H.~Weyl, {\it \"Uber gew\"ohnliche lineare Differentialgleichugen mit singularen Stellen und ihre Eigenfunktionen}, (German), G\"otinger Nachrichten, 37--64  (1907).

\bibitem{W10}
H.~Weyl, {\it \"Uber gew\"ohnliche Differentialgleichungen mit Singularit\"aten und die zugeh\"origen Entwicklungen willk\"urlicher Funktionen}. (Math. Ann., 68, no. 2,  (1910), 220--269.


\end{thebibliography}
\end{document}